\newtheorem{proposition}{Proposition}
\newtheorem{defn}{Definition}
\newtheorem{corollary}{Corollary}
\newtheorem{lemma}{Lemma}
\newtheorem{theorem}{Theorem}
\newtheorem{remark}{Remark}
\numberwithin{equation}{section}
\newcommand{\sleq}{\leqslant}
\newcommand{\sgeq}{\geqslant}
\newcommand{\ep}{\varepsilon}
\newcommand{\E}{\mathbb{E}}
\newcommand{\N}{\mathbb{N}}
\newcommand{\R}{\mathbb{R}}
\newcommand{\bS}{\mathbb{S}}
\newcommand{\Q}{\mathbb{Q}}
\newcommand{\cB}{\mathcal{B}}
\newcommand{\cF}{\mathcal{F}}
\newcommand{\cH}{\mathcal{H}}
\DeclareMathOperator*{\argmin}{arg\,min}
\title{Existence of flat flows for volume-preserving mean curvature flow with contact angle} 
\newcommand{\footremember}[2]{%
    \footnote{#2}
    \newcounter{#1}
    \setcounter{#1}{\value{footnote}}%
}
\author{
  Jiwoong Jang \footremember{trailer}{Department of Mathematics, University of Maryland-College Park (jjang124@umd.edu)},
  }
\date{}
\providecommand{\keywords}[1]{\textbf{Key words.} #1}
\providecommand{\MSC}[1]{\textbf{MSC codes.} #1}
\begin{document}
\maketitle

\pagestyle{myheadings}
\thispagestyle{plain}

\begin{abstract}

We study the motion of a droplet evolving by mean curvature with volume constraint and contact angle condition on a half space. We prove the existence of a global-in-time weak solution, called the flat flow. A difficulty arises when we establish the local-in-time equi-boundedness of approximate solutions and a uniform $L^2$-estimate of multipliers. The difficulty is handled by conducting blowup analysis at a point in contact to a spherical cap with sharp angle. 
\end{abstract}

\keywords{Mean curvature flow, Volume constraint, Contact angle, Weak solutions, Minimizing movements.}

\vspace{0.2cm}

\MSC{35D30, 49J53, 53E10, 58E12.}

\section{Introduction}\label{sec:introduction}
We let and fix throughout this paper an integer $d\sgeq1$, $\Omega:=\R^{d+1}$, and a Lipschitz function $\beta\,:\,\partial\Omega\to[-1+2\kappa,1-2\kappa]$ for a fixed $\kappa\in\left(0,\frac12\right]$. We consider a family $\{E_t\}_{t\sgeq0}$ of subsets of $\Omega$. The family $\{E_t\}_{t\sgeq0}$ is said to evolve by \emph{the volume-preserving mean curvature flow with contact angle condition $\beta$} if $E_t$ is open and smooth for each $t\sgeq0$, and
\begin{equation}
\begin{cases}
v=-H_{_{E_t}}+\lambda\qquad\quad\,\,\text{on }\partial E_t\cap\Omega\qquad\qquad\text{for }t>0,\\
\nu_{E_t}\cdot(-e_{d+1})=\beta \qquad\text{on }\partial_{\partial\Omega}(\partial E_t\cap\partial\Omega)\quad\,\text{for }t\sgeq0.
\end{cases}\label{eq:VPMCF-contact-angle}
\end{equation}
Here, $v$ is the normal speed on $\partial E_t\cap\Omega$ (taken negative for shrinking sets), $H_{E_t}$ is the mean curvature vector of $\partial E_t\cap\Omega$ (taken positive for convex sets), $\nu_{E_t}$ is the outer unit normal vector of $\partial E_t$, $e_{d+1}:=(0,\cdots,0,1)\in\R^{d+1}$, $\partial_{\partial\Omega}(\partial E_t\cap\partial\Omega)$ is the topological boundary of $\partial E_t\cap\partial\Omega$ in $\partial\Omega$. The number $\lambda$ is the multiplier responsible for the volume constraint given by
\begin{align*}
\lambda=\lambda(t)=\frac{1}{\mathcal{H}^{d}(\partial E_t\cap\Omega)}\int_{\partial E_t\cap\Omega}H_{E_t}\, d\cH^{d}.
\end{align*}
By the volume constraint we mean that with the $\lambda$ above, the total volume is preserved over time so that $|E_t|=|E_0|$ for $t\sgeq0$.

\vspace{\medskipamount}
The model \eqref{eq:VPMCF-contact-angle} is primarily motivated from the motion of a liquid drop on a flat surface whose first study goes back to Young \cite{Y1805} and Laplace \cite{L1805}. The surface (the closure of $\partial E_t\cap\Omega$ in the above notations) between the vapor and the liquid is called \emph{a capillary surface}, and the triple junction ($\partial_{\partial\Omega}(\partial E_t\cap\partial\Omega)$) where the vapor, liquid, and solid meet is called \emph{a contact line}. The coefficient function $\beta\,:\,\partial\Omega\to[-1+2\kappa,1-2\kappa]$ models the relative adhesion coefficient of the three media.

In \cite{Y1805}, the effect of mean curvature to the surface is first introduced (and followed by \cite{L1805}), and the contact angle condition for a stationary capillary surface on the contact line is formulated, now known as \emph{Young's law}. Later, Gauss introduced in \cite{G1830} a free energy of a (smooth) region $E$ occupied by the liquid, with volume constraint $|E|=m_0$ for a number $m_0>0$ that is fixed throughout the paper to denote the initial volume, given by
\[
\mathcal{F}(E) = P(\partial E \cap \Omega) + \int_{\partial E \cap \partial \Omega} \beta(x') \, d\mathcal{H}^{d}(x') + \int_E g(x) \, dx,
\]
where $P(\cdot)$ is the perimeter of the argument set, $g\,:\,\Omega\to\R$ is a potential energy from the gravity per unit mass.

The motion of the liquid will be seen as the gradient flow of the total energy $\cF$, that is, the movement which reduces the energy in the fastest manner. Due to possible singularities in a finite time notably from the mean curvature effect, we consider in this paper the movement in a generalized sense that will be explained in a moment, whose existence is the main subject of this paper. In this paper, we take $g=0$ in order to focus effects from the other components that are the mean curvature, the angle condition, and the volume constraint. It is expected, in view of the existence of a generalized motion, that the study in the gravity-free case can easily be adapted to the case with gravity, which we do not fully include here. Finally, we leave \cite{M08,F86} for historical accounts and modern developments of capillary surfaces, and we also refer to \cite{BK18} and the references therein for further and various interdisciplinary motivations.



\vspace{\medskipamount}
Even if we start with an initial datum regular enough, mean curvature flows, with or without volume constraint and contact angle conditions, are known to develop singularities such as topological changes in a finite time. Several weak notion of solutions to mean curvature flows that are defined beyond the first singularity time have been suggested, for which we give an outline (by no means complete) in the following.

Weak notions based on the viscosity solution theory to the pure mean curvature flow, where moving surfaces are regarded as a level-set of a value function that serves as an unknown, are given independently in \cite{ES91,CGG91} with basic properties. With contact angle condition in this framework, and with a forcing term given a priori, the weak notion through viscosity solutions is studied in \cite{B99,IS04,JKMT22,J23}.

Approximations to the mean curvature motion through the called Allen-Cahn equation, both for volume constraint and for contact angle condition, have a rich body of literature. Focusing on volume-preservation condition, \cite{T23} verifies in all dimensions the convergence to a weak solution to the motion, the so-called $L^2$-flows developed in \cite{MR08}, of solutions to Allen-Cahn-type equations to Allen-Cahn equations with a soft penalization term for volume constraint \cite{MSS16}. The Allen-Cahn equation considered in \cite{T23} subsequently yields applications recently; one \cite{J25} which takes hard obstacles into account while keeping volume constraint and another \cite{P25} which provides a varifold solution that is proved to both exist and agree with the strong solution (if the latter exist). The notion in \cite{P25} is based on the one in \cite{HL24B} for the mean curvature motion. See \cite{HL24B} and the references therein for recent works on the weak-strong uniqueness that is studied extensively.


The mean curvature flow with contact angle condition also has been studied in the framework of Allen-Cahn equations. When the prescribed angle is identically the right-angle along the boundary, the convergence to a weak notion upto the boundary is proved on a convex domain \cite{MT15} and later generalized on a possibly nonconvex domain \cite{K19}. Generalizing the results in this regard, especially the motion law upto the boundary, for general angles has been left open to the best of the author's knowledge. Under a natural energy assumption (see \eqref{eq:energy-assumption}) which is first formulated in \cite{LS95}, \cite{HL24} establishes the existence of BV-distributional solutions and the weak-strong uniqueness.

\vspace{\medskipamount}

We take the approach based on minimizing movements, independently proposed by \cite{ATW93} and \cite{LS95}. The works \cite{ATW93,LS95} consider an alternative proxy (see \eqref{eq:energies}) instead of the $L^2$-distance from the metric tensor due to the complete degeneracy \cite{MM06}. Any limit flow obtained as time step goes to zero is called the flat flow. The volume-preservation condition is handled in \cite{MSS16} with a penalization term, while the contact angle condition for the flat flow is studied in \cite{BK18}. In particular for the latter, the advantage of considering flat flows in \cite{BK18} compared to the Allen-Cahn formulation is that the motion law is concluded upto the boundary for general angles. We combine the approaches from \cite{MSS16,BK18} in this paper.

A technical difficulty arises when we prove boundedness of sets (denoted by $E^h_{t}$ with time step size $h\in(0,1)$, see Section \ref{sec:prelim-main-results}) from minimizing movements uniformly in time step size which is needed to ensure the convergence to flat flows. The work \cite{BK18} for the mean curvature flow with contact angle is based on the comparison principle, which we cannot expect to hold for the volume-preserving mean curvature flow due to the non-locality.

By minimizing movements, we obtain a sequence of sets $\{E_k\}_{k\in\N_{\sgeq0}}$, where $E_k$ is a minimizer of a suitable functional with a reference set $E_{k-1}$ for $k\sgeq1$. To prove the equi-boundedness (that is, uniform in time step size) of the sets, the work \cite{MSS16} considers a ball that shrinks starting with a large radius until the ball meets the set $E_k$ at a contact point $x_0$. Thanks to the minimality of $E^k$ and the classical study on (interior) regularity of minimal surfaces, the point $x_0$ is proved to a regular point in the sense that the surface $\partial E^k$ is a $C^{2,\alpha}$ manifold near $x_0$. This observation allows to connect the distance, say $r_k$, of the set $E^k$ from the origin and the distance $r_{k-1}$ of its reference set $E_{k-1}$, which yields a useful recursive relation of the distances $\{r_k\}_{k\in\N_{\sgeq0}}$. As this argument relies on the interior regularity of minimal surfaces, it does not directly work for our problem in case the point $x_0$ is on the boundary $\partial\Omega$.

In this regard, we consider the fundamental work \cite{DePM15} that establishes the boundary regularity of minimizers for a capillary functional and proves (anisotropic) Young's law for the minimizers. Now thanks to the validity of Young's law, we then naturally consider spherical caps that meet $\partial\Omega$ with ``sharp angles," in the expectation that the spherical cap while shrinking does not meet the set $E_k$ on the boundary $\partial\Omega$. This expectation rigorously works indeed when the contact point, say $x_0$, now on $\partial\Omega$ is a regular point so that Young's law can be applied. However, as we do not know a priori if $x_0$ is regular or not unlike in the interior case as the above, we need to perform a careful blowup analysis to derive a contradiction at the contact point $x_0$ that can possibly be singular. After achieving to derive a contradiction, we finish proving the equi-boundedness as the interior regularity theory applies as the above.

\vspace{\medskipamount}





Once flat flows for \eqref{eq:VPMCF-contact-angle} exist as in Theorem \ref{thm:flat-flows}, we can ask next questions that are important from physical motivations. A recent work \cite{JN24} proves the consistency based on the uniform ball condition for the problem under volume constraint but without contact angle, while \cite{K25} proves the consistency based on the comparison principle for the contact angle problem but without volume constraint. As the two approaches seem not applicable to the settings of each other, establishing the consistency for our problem \eqref{eq:VPMCF-contact-angle} with the both conditions is an interesting question.

The large-time behavior for flat flows and quantitative versions of the Alexandrov theorem have recently been studied actively. See \cite{JMPS23} and the references therein. Recently, \cite{JZ25} establishes a quantitative Alexandrov theorem for capillary surfaces, and as a further application of the result, \cite{JZ25} remarks the large-time behavior of flat flows for our model \eqref{eq:VPMCF-contact-angle} as a future research problem.

\vspace{\medskipamount}
The paper is organized as follows: Section \ref{sec:prelim-main-results} arranges notations, definitions, and settings, and state the main result (Theorem \ref{thm:flat-flows}). Section \ref{sec:approximate-solutions} states and proves properties of approximate flows such as the existence of minimizers at each step, density estimates, and the Euler-Lagrange equation. Assuming Proposition \ref{prop:barrier-functions}, we prove Theorem \ref{thm:flat-flows}  at the end of Section \ref{sec:approximate-solutions}. Proposition \ref{prop:barrier-functions} is proved in Section \ref{sec:barrier-functions}. We then prove Theorem \ref{thm:BV-distributional-solutions} in Section \ref{sec:BV-solutions} under the assumption \ref{eq:energy-assumption}.

Throughout the rest of the paper, we will denote positive constants that may vary line by line by several characters such as $C,c,R,\theta>0$ and possibly more. Dependency of these numbers on quantities such as $d,\kappa,T>0$ will be specified as arguments.

\section{Preliminaries and main results}\label{sec:prelim-main-results}
For a Lebesgue measurable set $E\subset\mathbb{R}^{d+1}$, $\chi_{E}$ denotes the characteristic function of $E$, and $|E|$ denotes its Lebesgue measure. For an open set $U\subset\mathbb{R}^{d+1}$, the set of $L^1(U)$-functions of bounded variation is denoted by $BV(U)$, and we let $BV(U,\{0,1\})$ be the set of all sets of finite perimeter in $U$, that is,
\[
BV(U,\{0,1\}):=\{E\subset U\,:\,\chi_E\in BV(U)\}.
\]
For $E\in BV(U,\{0,1\})$, $P(E,U)$ denotes the relative perimeter of $E$ in $U$, that is,
\[
P(E, U) := \sup \left\{\left.\int_{E} \operatorname{div} \varphi \,d x\,: \,\varphi \in C_{c}^{1}\left(U ; \mathbb{R}^{d+1}\right) \text { with } \sup _{U}|\varphi| \leq 1\right\},\right.
\]
and we let $P(E):=P(E,\mathbb{R}^{d+1})$. We denote by $\partial^{\ast}E$ the essential boundary of $E$ and by $\nu_{E}(x)$ the measure-theoretic outward unit normal vector of $E$ at $x\in\partial^{\ast}E$. Among Lebesgue equivalent sets in $U$ for a given Lebesgue measurable set $E\subset U$, we consider the set of points of density one as the representative of $E$. Similar definitions can be made for open subsets of $\R^{d}$ which is identified with $\partial\Omega$. Here (and henceforth), we fix $\Omega:=\mathbb{R}^{d+1}\times(0,\infty)$. We write a vector $x\in\mathbb{R}^{d+1}$ in a way that $x=(x',x_{d+1})\in\mathbb{R}^d\times\mathbb{R}$. Similarly, $\nabla'$ ($\mathrm{div}'$, resp.) denotes the gradient (the divergence, resp.) in the variable $x'\in\R^d$. The notation $e_{d+1}$ refers to the vector $(0,\cdots,0,1)\in\R^{d+1}$.

\vspace{\medskipamount}

We say in this paper that a vector field $\Psi=(\Psi',\Psi_{d+1})\in C^1_c(\overline{\Omega},\R^{d+1})$ is admissible if $\Psi_{d+1}=0$ on $\partial\Omega$.

\vspace{\medskipamount}
We denote by $\mathrm{Tr}(E)$ the trace set of $E\in BV(\Omega,\{0,1\})$ on $\partial\Omega$. We also write $\chi_{\mathrm{Tr}(E)}=\chi_E$ on $\partial\Omega$ by abuse of notations, and we set
\[
P(E, \overline{\Omega}) := P(E, \Omega)+\int_{\partial \Omega} \chi_{E} \,d \mathcal{H}^{d},
\]
which agrees with $P(E)$. We say that a set $E\in BV(\Omega,\{0,1\})$ has a generalized mean curvature $H_E\in L^1(\partial^{\ast}E\cap\Omega,d\mathcal{H}^{d})$ in $\Omega$ if it holds
\[
\int_{\partial^{*} E} \operatorname{div}_{\partial E} \Psi \,d \mathcal{H}^{d}=\int_{\partial^{*} E} \Psi \cdot \nu_{E} H_{E} \,d \mathcal{H}^{d} \quad \text { for all } \Psi \in C_{c}^{1}\left(\Omega , \mathbb{R}^{d+1}\right).
\]
Here, $\operatorname{div}_{\partial E} \Psi$ denotes the tangential divergence of $\Psi$ defined on $\partial^{\ast}E$ by $\operatorname{div}_{\partial E} \Psi :=\operatorname{div} \Psi-\nu_{E} \cdot \nabla \Psi \nu_{E}$. The sign convention we take here is that $H_E\geq0$ for a convex set $E$. Regarding more definitions and properties of $BV$-functions, we refer to \cite{G84}.

\vspace{\medskipamount}

For a given measurable set $F\subset\R^{d+1}$, we denote by $\operatorname{sd}_F$ the signed distance from $F$ being negative inside $F$, that is,
\[
\operatorname{sd}_{F}(x):=\left\{\begin{array}{ll}
\operatorname{dist}(x, F) & \text { for } x \in F^{c}, \\
-\operatorname{dist}(x, F^{c}) & \text { for } x \in F.
\end{array}\right.
\]
Here, $\operatorname{dist}(x, G)$ is defined by $\inf_{y\in G}|x-y|$ for a set $G\subset\R^{d+1}$. We set $\operatorname{d}_F:=|\operatorname{sd}_F|$.

\vspace{\medskipamount}

Throughout the paper, we fix $m_0>0$, $\kappa\in\left(0,\frac{1}{2}\right]$, and $\beta\in \mathrm{Lip}(\partial\Omega)$ with $\|\beta\|_{L^{\infty}(\partial\Omega)}\sleq1-2\kappa$. For $h\in(0,1)$ and $E,F\in BV(\Omega,\{0,1\})$, we define the capillary energy $C_{\beta}(E)$ and the capillary version of Almgren--Taylor--Wang-version functional $\mathcal{F}^h(E,F)$ by
\begin{align}
\begin{cases}
\,\, \quad C_{\beta}(E):=P(E, \Omega)+\int_{\partial \Omega} \beta  \chi_{E} \,d \mathcal{H}^{d},\\
\mathcal{F}^{h}(E, F):=C_{\beta}(E)+\frac{1}{h} \int_{E \Delta F} \operatorname{d}_F\,d x+\frac{1}{\sqrt{h}}\left||E|-m_{0}\right|.
\end{cases}\label{eq:energies}
\end{align}

\vspace{\medskipamount}

We define approximate flat flows and flat flows for \eqref{eq:VPMCF-contact-angle} as follows. The integer which is the greatest among all integers smaller or equal to $s\in\R$ is denoted by $\lfloor s\rfloor$. 

\begin{defn}
Let $E_0\in BV(\Omega,\{0,1\})$ be a bounded set of volume $m_0$, and let $h\in(0,1)$.
\begin{itemize}
    \item[(i)] (Approximate flat flows) Let $\{E^h_{kh}\}_{k\in\N_{\geq0}}$ be a sequence of sets in $BV(\Omega,\{0,1\})$ defined recursively by $E^h_0=E_0$ and
    \[
    E^h_{kh}\in\argmin \left\{\mathcal{F}^h(E,E^h_{(k-1)h})\,:\,E\in BV(\Omega,\{0,1\})\right\}\qquad\text{for }k\sgeq1.
    \]
    We call $\{E^h_t\}_{t\sgeq0}$ with $E^h_t:=E^h_{\lfloor t/h\rfloor h}$ for $t\sgeq0$ an approximate flat flow for \eqref{eq:VPMCF-contact-angle} with initial datum $E_0$.
    \item[(ii)] (Flat flows) Any $L^1$-limit of approximate flat flows is called a flat flow.
\end{itemize}
\end{defn}
\vspace{\medskipamount}

We now present our main results.

\begin{theorem}[Existence of flat flows]\label{thm:flat-flows}
Let $E_0\in BV(\Omega,\{0,1\})$ be a bounded set of volume  $m_0$, and let $\{E^h_{t}\}_{t\sgeq0}$ be an approximate flat flow for \eqref{eq:VPMCF-contact-angle} with initial datum $E_0$ for each $h\in(0,1)$. Then, there exist a family of sets $\{E_t\}_{t\sgeq0}$ in $BV(\Omega,\{0,1\})$ and a sequence  $h\to0$ such that
\[
|E_t^{h} \Delta E_t| \to 0\quad \text{as }h\to0 \quad \text{for } t \sgeq0.
\]
Moreover, there exists $\theta(d,\kappa)>0$ such that for all $t,s\sgeq0$, we have $|E_t|=m_0$ and $$|E_t\Delta E_s|\sleq \theta(d,\kappa)P(E_0)|t-s|^{1/2}.$$
\end{theorem}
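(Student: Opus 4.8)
The plan is to establish the theorem in three stages: first the compactness of approximate flows yielding a subsequential $L^1$-limit, then the stated $1/2$-H\"older estimate in time (which will itself be what provides the needed equi-continuity for the compactness argument), and finally the volume constraint $|E_t|=m_0$.

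\textbf{Step 1: Uniform energy bounds and a discrete H\"older estimate.} From the minimality of $E^h_{kh}$ against the competitor $E^h_{(k-1)h}$ in $\mathcal{F}^h(\cdot,E^h_{(k-1)h})$, I would first derive the dissipation inequality
\[
C_\beta(E^h_{kh})+\frac1h\int_{E^h_{kh}\Delta E^h_{(k-1)h}}\mathrm{d}_{E^h_{(k-1)h}}\,dx+\frac{1}{\sqrt h}\bigl||E^h_{kh}|-m_0\bigr|\sleq C_\beta(E^h_{(k-1)h}),
\]
so that $C_\beta(E^h_{kh})$ is nonincreasing in $k$ and bounded by $C_\beta(E_0)\sleq P(E_0)$ (using $\|\beta\|_\infty\sleq1-2\kappa<1$; note $C_\beta$ is comparable to the perimeter, $2\kappa\,P(E,\Omega)\sleq C_\beta(E)+\text{(lower order)}$, which also gives a uniform perimeter bound $P(E^h_{kh})\sleq \frac1{\text{const}(\kappa)}P(E_0)$). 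Summing the dissipation terms telescopically over $k$ gives $\sum_k \frac1h\int_{E^h_{kh}\Delta E^h_{(k-1)h}}\mathrm{d}_{E^h_{(k-1)h}}\,dx\sleq P(E_0)$. The standard Almgren--Taylor--Wang interpolation (as in \cite{ATW93,LS95,MSS16,BK18}) converts this into an $L^1$-in-space, $1/2$-H\"older-in-time estimate: for any $j>i\sgeq0$,
\[
|E^h_{jh}\Delta E^h_{ih}|\sleq \theta(d)\Bigl(P(E_0)\,(jh-ih)\Bigr)^{1/2},
\]
via the elementary bound $|E^h_{kh}\Delta E^h_{(k-1)h}|^2\sleq c(d)\,P(E_0)\int_{E^h_{kh}\Delta E^h_{(k-1)h}}\mathrm{d}_{E^h_{(k-1)h}}\,dx$ (which follows by comparing the volume of the symmetric difference with the integral of the distance to $E^h_{(k-1)h}$ over a set of that volume, using the isoperimetric-type inequality bounding the measure by the perimeter), followed by Cauchy--Schwarz in the sum over $k$. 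Passing from $t=\lfloor t/h\rfloor h$ back to continuous time only costs an extra additive $(h)^{1/2}$ term that vanishes. This is the estimate claimed in the theorem with $\theta(d,\kappa)$ absorbing the $\kappa$-dependence from the perimeter comparison.

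\textbf{Step 2: Compactness.} For each fixed $t\sgeq0$, the sets $\{E^h_t\}_{h\in(0,1)}$ have uniformly bounded perimeter and (by the equi-boundedness established using Proposition \ref{prop:barrier-functions}, the barrier/blowup argument the introduction describes) are contained in a fixed ball, hence by $BV$-compactness they are precompact in $L^1$. Applying a diagonal argument over a countable dense set of times $t\in\Q_{\sgeq0}$ extracts a single sequence $h\to0$ along which $E^h_t\to E_t$ in $L^1$ for every rational $t$; the uniform H\"older estimate from Step 1 then lets me extend the convergence to all $t\sgeq0$ and shows the limit family $\{E_t\}_{t\sgeq0}$ inherits $|E_t\Delta E_s|\sleq\theta(d,\kappa)P(E_0)|t-s|^{1/2}$ by passing to the limit in the discrete inequality (lower semicontinuity is not even needed here since both sides are continuous under $L^1$-convergence).

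\textbf{Step 3: Volume constraint.} The term $\frac1{\sqrt h}\bigl||E^h_{kh}|-m_0\bigr|$ in the dissipation sum, summed over $k\sleq t/h$, is bounded by $P(E_0)$, so $\sum_{k=1}^{\lfloor t/h\rfloor}\bigl||E^h_{kh}|-m_0\bigr|\sleq \sqrt h\,P(E_0)$; in particular for each fixed $k$ (equivalently each fixed $t$) $\bigl||E^h_t|-m_0\bigr|\sleq\sqrt h\,P(E_0)\to0$, so $|E_t|=\lim_h|E^h_t|=m_0$. (The convergence $|E^h_t|\to|E_t|$ uses the $L^1$-convergence together with the uniform boundedness in a fixed ball from Step 2.)

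\textbf{Main obstacle.} Steps 1 and 3 are the by-now-standard minimizing-movements machinery and should go through routinely once the energy $C_\beta$ is controlled from below by the perimeter (the role of $\kappa>0$). The genuine difficulty, flagged in the introduction, is hidden in Step 2: the \emph{equi-boundedness} of $\{E^h_t\}$ uniformly in $h$ — without it one has no compactness. This is exactly where the non-locality of the volume-preserving flow breaks the comparison-principle approach of \cite{BK18}, and where one must instead run the shrinking-spherical-cap barrier argument and, at a possibly singular boundary contact point, a blowup analysis to reach a contradiction; this is the content of Proposition \ref{prop:barrier-functions}, which I am permitted to assume here. Everything else in the proof of Theorem \ref{thm:flat-flows} is then a short assembly of the pieces above.
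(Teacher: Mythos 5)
Your overall strategy matches the paper's proof closely: dissipation bounds and Almgren--Taylor--Wang interpolation give the discrete $\tfrac12$-H\"older estimate (Proposition \ref{prop:C-half-approximate-flow}); equi-boundedness from Proposition \ref{prop:barrier-functions} together with $BV$-compactness and a diagonal argument over $\Q_{\sgeq0}$ gives the limit family; and the H\"older bound extends the convergence to all $t\sgeq0$ via the completeness of $L^1$. However, two of the details as you have written them are not correct, even though the surrounding machinery repairs both.

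First, your dissipation inequality drops a term. Comparing $\mathcal{F}^h(E^h_{kh},E^h_{(k-1)h})\sleq\mathcal{F}^h(E^h_{(k-1)h},E^h_{(k-1)h})$ gives
\[
C_\beta(E^h_{kh})+\tfrac1h\int_{E^h_{kh}\Delta E^h_{(k-1)h}}\mathrm{d}_{E^h_{(k-1)h}}\,dx+\tfrac1{\sqrt h}\left|\left|E^h_{kh}\right|-m_0\right| \sleq C_\beta(E^h_{(k-1)h}) + \tfrac1{\sqrt h}\left|\left|E^h_{(k-1)h}\right|-m_0\right|,
\]
since the competitor $E^h_{(k-1)h}$ need not have volume exactly $m_0$. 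When this is iterated, the volume-penalty terms \emph{telescope} rather than accumulate: you obtain the pointwise-in-$k$ bound $\tfrac1{\sqrt h}\left|\left|E^h_{kh}\right|-m_0\right|\sleq P(E_0)$ (this is Proposition \ref{prop:dissipations}(ii)), but \emph{not} the summed bound $\sum_k\tfrac1{\sqrt h}\left|\left|E^h_{kh}\right|-m_0\right|\sleq P(E_0)$ that you assert in Step 3. Fortunately the per-step bound already yields $\left|\left|E^h_t\right|-m_0\right|\sleq\sqrt h\,P(E_0)\to0$, which is all Step 3 requires, so the conclusion $|E_t|=m_0$ survives, but the stated reasoning is wrong and should be fixed.

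Second, the per-step inequality $|E^h_{kh}\Delta E^h_{(k-1)h}|^2\sleq c(d)P(E_0)\int_{E^h_{kh}\Delta E^h_{(k-1)h}}\mathrm{d}_{E^h_{(k-1)h}}\,dx$ that you feed into Cauchy--Schwarz is not what the interpolation lemma directly gives. Proposition \ref{prop:L1} provides $|E\Delta F|\sleq C(d,\kappa)P(F)l+\tfrac1l\int_{E\Delta F}\mathrm{d}_F\,dx$ only for $l\in(0,c\sqrt h)$ --- the constraint reflects that the underlying density estimates hold only at scales $\lesssim\sqrt h$. Optimizing freely over $l>0$ would give your bound, but the unconstrained optimizer may exceed $c\sqrt h$. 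The paper avoids the issue by keeping a single $l$ fixed across the entire sum in \eqref{eq:L1-applied}, applying the dissipation bounds, and only afterwards choosing $l=\tfrac12|t-s|^{-1/2}h$, which is verifiably inside $(0,c\sqrt h)$ under the stated restriction on $h$. Your per-step-then-Cauchy--Schwarz route is a standard alternative and can likely be made to work, but as written it glosses over the constraint, and you should either justify that the optimal $l$ is admissible or switch to the single-$l$ argument.
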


We now make the assumption \eqref{eq:energy-assumption} as proposed in \cite{LS95} and obtain the compatibility of flat flows with BV-distributional solutions. We recall that a vector field $\Psi=(\Psi',\Psi_{d+1})\in C^1_c(\overline{\Omega},\R^{d+1})$ is called admissible if $\Psi_{d+1}=0$ on $\partial\Omega$.

\begin{theorem}[Compatibility with BV-distributional solutions]\label{thm:BV-distributional-solutions}
Let the assumptions and the notations in Theorem \ref{thm:flat-flows} be in force. Suppose that
\begin{align}\label{eq:energy-assumption}
\mathcal{H}^d\lfloor_{\partial^{\ast}E^h_t\cap\Omega}\quad\longrightarrow{}\quad\mathcal{H}^d\lfloor_{\partial^{\ast}E_t\cap\Omega}\qquad\text{weakly$^{\ast}$ as $h\to0$ for a.e. $t\sgeq0$}.
\end{align}
Then, the following hold:
\begin{itemize}
    \item[(i)] There exist functions $v:\Omega\times(0,\infty)\to\R$, $\lambda:(0,\infty)\to\R$, $H_E:\Omega\times(0,\infty)\to\R$ and constants $C(d,\kappa),\ R_T=R_T(d,\kappa,T)>0$ (for a given $T>0$) such that
\begin{equation}\label{eq:L2-target-functions}
\left\{
\begin{aligned}
&\qquad\qquad\int_0^\infty \int_{\partial^{\ast}E_t\cap\Omega} v^2 \, d\mathcal{H}^d dt \leq C(d,k)P(E_0), \\
&\int_0^T \int_{\partial^{\ast}E_t\cap\Omega} H_E^2 \, d\mathcal{H}^d dt + \int_0^T \lambda^2 dt \leq R_T \quad \text{for each } T > 0,
\end{aligned}
\right.
\end{equation}
and the set $\partial E_t\cap\Omega$ has a generalized mean curvature $H_{E}(\cdot,t)$ for a.e. $t\sgeq0$. Moreover,
    

    \item[(ii)] For admissible $\Psi\in C^1_c\left(\overline{\Omega},\R^{d+1}\right)$ and a.e. $t\sgeq0$, it holds that 
    \begin{align}
    \int_{\partial^{\ast}E_t\cap\Omega} (\mathrm{id}-\nu_{E_{t}} \otimes \nu_{E_{t}}) :& \nabla \Psi\,d\mathcal{H}^d + \int_{\partial^{\ast}E_t\cap\Omega}v \Psi \cdot v_{E_{t}} \,d\mathcal{H}^{d}\notag\\
    &+ \int_{\mathrm{Tr}(E_{t})} \mathrm{div}'(\beta \Psi') \,d\mathcal{H}^{d-1} = \lambda(t) \int_{\partial^{\ast}E_t\cap\Omega} \Psi\cdot\nu_{E_{t}} \,d\mathcal{H}^{d}.\label{eq:without-perimeter-assumption}    
    \end{align}
    
    \item[(iii)] If $1\sleq d\sleq 6$, a flat flow $\{E_t\}_{t\sgeq0}$ is a BV-distributional solution to the motion law $v=-H_E+\lambda$ with initial datum $E_0$, that is,
    \begin{align}
    \int_0^\infty \int_{E_t} \phi_t \, dx \, dt + \int_{E_0} \phi(x,0) \, dx &= - \int_0^\infty \int_{\partial^{\ast}E_t\cap\Omega} \phi v \, d\mathcal{H}^ddt\label{eq:motion-law-v}\\
    &=\int_0^\infty \int_{\partial^{\ast}E_t\cap\Omega} \phi (H_{E}-\lambda) \, d\mathcal{H}^ddt\label{eq:motion-law-lambda}
    \end{align}
    for $\phi\in C^1_c(\Omega\times[0,\infty))$. Also, it holds that for a.e. $t\sgeq0$,
    \begin{align}\label{eq:lambda}
        \lambda(t)=\frac{1}{\mathcal{H}^d(\partial^{\ast}E_t\cap\Omega)}\int_{\partial^{\ast}E_t\cap\Omega}H_E(\cdot,t)d\mathcal{H}^d.
    \end{align}

\end{itemize}
\end{theorem}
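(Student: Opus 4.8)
\textbf{Proof proposal for Theorem \ref{thm:BV-distributional-solutions}.}

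The plan is to pass to the limit $h\to0$ in the Euler--Lagrange equations and dissipation inequalities for the discrete-in-time scheme, using the minimality of $E^h_{kh}$ at each step. First I would extract the discrete velocity: writing $v^h_t := \operatorname{sd}_{E^h_{(k-1)h}}/h$ on $\partial^\ast E^h_{kh}\cap\Omega$ for $t\in[(k-1)h,kh)$, the competitor estimate $\mathcal{F}^h(E^h_{kh},E^h_{(k-1)h})\le\mathcal{F}^h(E^h_{(k-1)h},E^h_{(k-1)h})$ telescopes to give $\sum_k \frac{1}{h}\int_{E^h_{kh}\Delta E^h_{(k-1)h}}\operatorname{d}_{E^h_{(k-1)h}}\,dx \le C_\beta(E_0)\le \theta(d,\kappa)P(E_0)$ (the capillary energy is monotone nonincreasing along the scheme, and $C_\beta\ge \kappa P$ by the bound $\|\beta\|_\infty\le1-2\kappa$). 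This is the source of the first bound in \eqref{eq:L2-target-functions}: the left side controls $\sum_k h\int_{\partial^\ast E^h_{kh}\cap\Omega}(v^h_t)^2$ up to a standard comparison between the bulk integral $\frac1h\int_{E\Delta F}\operatorname{d}_F$ and $h\int_{\partial^\ast E}(\operatorname{sd}_F/h)^2$ (as in \cite{MSS16,ATW93}). The second bound in \eqref{eq:L2-target-functions} and \eqref{eq:lambda} come from the first-variation identity for the minimizer: perturbing $E^h_{kh}$ by the flow of an admissible $\Psi$ yields, for the discrete multiplier $\lambda^h_{kh}$ arising from the $\frac{1}{\sqrt h}||E|-m_0|$ term (use a volume-fixing variation to isolate it), the identity
\begin{align}
\int_{\partial^\ast E^h_{kh}\cap\Omega}(\mathrm{id}-\nu\otimes\nu):\nabla\Psi\,d\mathcal{H}^d
+\int_{\mathrm{Tr}(E^h_{kh})}\mathrm{div}'(\beta\Psi')\,d\mathcal{H}^{d-1}
+\int_{\partial^\ast E^h_{kh}\cap\Omega} v^h_t\,\Psi\cdot\nu\,d\mathcal{H}^d
=\lambda^h_{kh}\int_{\partial^\ast E^h_{kh}\cap\Omega}\Psi\cdot\nu\,d\mathcal{H}^d.\notag
\end{align}
Choosing $\Psi=\Psi(\cdot,t)$ suitably (a vector field whose normal component reproduces $H_E$, valid since $\partial E^h_{kh}$ is regular away from a small set by \cite{DePM15} and interior theory, or more robustly testing against the generalized mean curvature after establishing it via \eqref{eq:energy-assumption}), Cauchy--Schwarz and the first bound give $h\sum_k\int (H_E)^2 + h\sum_k(\lambda^h_{kh})^2\le R_T$ on $[0,T]$; this is where the uniform-in-$h$ control is essential and rests on Theorem \ref{thm:flat-flows}'s equi-boundedness (so that $P(E^h_t)$ and $\mathcal{H}^d(\partial^\ast E^h_t\cap\Omega)$ are bounded below and above on $[0,T]$, the lower bound preventing $\lambda^h$ from blowing up).

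Next I would pass to the limit. By \eqref{eq:L2-target-functions} the velocities $v^h_t$, the mean curvatures $H_E^h(\cdot,t)$, and the multipliers $\lambda^h_t$ are bounded in $L^2(dt\,d\mathcal{H}^d)$ and $L^2(dt)$ respectively, so along a subsequence they converge weakly to limits $v,H_E,\lambda$. The convergence $|E^h_t\Delta E_t|\to0$ from Theorem \ref{thm:flat-flows}, combined with the energy-convergence assumption \eqref{eq:energy-assumption} (which upgrades lower semicontinuity of perimeter to convergence of the boundary measures, hence convergence of $\nu^h\otimes\nu^h\,\mathcal{H}^d\lfloor_{\partial^\ast E^h_t}$ — here I would invoke the standard fact, as in \cite{MR08,T23}, that weak$^\ast$ convergence of the mass measures plus $L^1$ set convergence forces the generalized normals to converge), lets me pass to the limit in the discrete first-variation identity to obtain \eqref{eq:without-perimeter-assumption}, and identifies the limit normal $\nu_{E_t}$, proving the generalized-mean-curvature existence for a.e.\ $t$. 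Taking $\Psi=\Psi(\cdot,t)$ constant in the normal direction (or rather $\Psi\cdot\nu$ ranging over a suitable dense class) and using \eqref{eq:without-perimeter-assumption} with the generalized mean curvature definition gives \eqref{eq:lambda}. For part (iii), the BV-distributional formulation \eqref{eq:motion-law-v} follows from a discrete integration by parts in time: for $\phi\in C^1_c(\Omega\times[0,\infty))$,
\begin{align}
\int_0^\infty\!\!\int_{E^h_t}\phi_t\,dx\,dt+\int_{E_0}\phi(x,0)\,dx
= -\sum_k\int_{E^h_{kh}\Delta E^h_{(k-1)h}}\!\!\pm\,\phi(x,(k-1)h)\,dx + o(1),\notag
\end{align}
and the right side converges to $-\int_0^\infty\!\int_{\partial^\ast E_t\cap\Omega}\phi v\,d\mathcal{H}^d\,dt$ by the same bulk-to-boundary comparison plus the $L^2$ convergence of $v^h$; the chain $v=-H_E+\lambda$ then yields \eqref{eq:motion-law-lambda}. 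The restriction $1\le d\le6$ enters only to guarantee, via \cite{DePM15} and De Giorgi's interior theorem, that the singular set of each $E^h_{kh}$ is empty so that the first-variation computations are genuinely classical and the sign conventions for $H_E$, $v$ are unambiguous; with \eqref{eq:energy-assumption} in hand one could phrase everything measure-theoretically, but I would follow the cleaner regular route in low dimensions.

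The main obstacle I anticipate is the uniform-in-$h$ $L^2$-bound on the multipliers $\lambda^h_t$ on each $[0,T]$, i.e.\ the second line of \eqref{eq:L2-target-functions}. Unlike the pure (non-volume-constrained) case, $\lambda^h$ couples the whole surface, and controlling it requires both a uniform upper bound on $P(E^h_t)$ (from energy monotonicity, easy) \emph{and} a uniform positive lower bound on $\mathcal{H}^d(\partial^\ast E^h_t\cap\Omega)$ together with a uniform upper bound on $\int (H_E^h)^2$ — and these last two are genuinely intertwined with the equi-boundedness of the sets, which is exactly the delicate blowup-at-a-contact-point analysis behind Theorem \ref{thm:flat-flows} and Proposition \ref{prop:barrier-functions}. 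A secondary subtlety is that \eqref{eq:energy-assumption} is only assumed for a.e.\ $t$, so all limiting identities can only be claimed for a.e.\ $t$, and one must be careful that the time-integrated statements \eqref{eq:motion-law-v}--\eqref{eq:motion-law-lambda} are not affected by the exceptional null set; this is handled by dominated convergence using the $L^2_{t,x}$ bounds. Finally, justifying that weak$^\ast$ convergence of $\mathcal{H}^d\lfloor_{\partial^\ast E^h_t\cap\Omega}$ plus $L^1$ set convergence implies convergence of the tensor-valued measures $(\mathrm{id}-\nu\otimes\nu)\mathcal{H}^d\lfloor_{\partial^\ast E^h_t}$ (needed for the first term of \eqref{eq:without-perimeter-assumption}) is the one step where I would cite the established varifold/$L^2$-flow machinery rather than reprove it.
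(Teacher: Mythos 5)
Your overall architecture matches the paper's: the discrete dissipation estimates give the $L^2$ bound on $v^h$ (Proposition~\ref{prop:L2-discrete-velocities}); the Euler--Lagrange identity (Proposition~\ref{prop:EL-equation}) combined with the equi-boundedness and the $L^2$ multiplier bound from the spherical-cap blowup argument (Proposition~\ref{prop:barrier-functions}, Corollary~\ref{cor:L2-mean-curvature}) yield the $L^2$ bounds on $H_{E^h_t}$ and $\lambda^h$; the limits are extracted via Hutchinson-type compactness for integral varifolds with $L^2$ mean curvature together with the energy assumption \eqref{eq:energy-assumption}; and \eqref{eq:motion-law-v} follows from a discrete integration by parts plus an error estimate (Proposition~\ref{prop:error-estimate}) whose validity is exactly where $1\le d\le 6$ enters, for the reason you identify. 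You also correctly single out the $\lambda^h$ bound and the equi-boundedness as the genuine bottleneck.

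There is, however, a concrete gap in your derivation of \eqref{eq:lambda}. You propose to get it by ``taking $\Psi$ constant in the normal direction'' in \eqref{eq:without-perimeter-assumption} together with the generalized mean curvature definition. But \eqref{eq:without-perimeter-assumption} plus the generalized-mean-curvature identity only produce $\int_{\partial^\ast E_t\cap\Omega}(H_E+v-\lambda)\,\Psi\cdot\nu_{E_t}\,d\mathcal{H}^d=0$ for admissible $\Psi$, i.e.\ the pointwise relation $H_E+v=\lambda$ $\mathcal{H}^d$-a.e.\ on $\partial^\ast E_t\cap\Omega$. That is \emph{not} \eqref{eq:lambda}: to pass from $H_E=\lambda-v$ to $\lambda=\fint H_E$ you must additionally know that $\int_{\partial^\ast E_t\cap\Omega} v\,d\mathcal{H}^d=0$ for a.e.\ $t$, and this does not follow from the first-variation identity. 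It follows from \emph{volume preservation}: test \eqref{eq:motion-law-v} with $\phi(x,t)=\eta(t)\zeta^r(x)$, $\eta\in C^1_c([0,\infty))$, $\zeta^r\nearrow 1$ on large compact sets as $r\to0$, to get $\int_0^\infty\eta_t(t)|E_t|\,dt+\eta(0)|E_0|=-\int_0^\infty\eta(t)\int_{\partial^\ast E_t\cap\Omega}v\,d\mathcal{H}^d\,dt$; since $|E_t|\equiv m_0$ (Theorem~\ref{thm:flat-flows}), the left side vanishes and the claim follows, after which the same test in \eqref{eq:motion-law-lambda} gives \eqref{eq:lambda}. Your write-up omits this use of volume preservation, and without it \eqref{eq:lambda} is not reached. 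A minor additional omission is that passing to the limit in the boundary term of \eqref{eq:discrete-EL} requires a trace-convergence argument (showing $\int_{\mathrm{Tr}(E^h_t)}\phi\,d\mathcal{H}^d\to\int_{\mathrm{Tr}(E_t)}\phi\,d\mathcal{H}^d$ for $\phi\in L^\infty$, obtained via the divergence theorem, \eqref{eq:energy-assumption}, and the $L^1$ convergence of $E^h_t$), which your sketch skips over.
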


\section{Properties of approximate flat flows}\label{sec:approximate-solutions}
\subsection{Minimizers at discrete time steps}\label{subsec:minimizers}
First of all, we state that there is a solution to the minimization problem for the functional $\cF^{h}$, which in particular implies that of approximate flat flows with bounded initial datum of finite perimeter. For $r>0$, let $B_r$ ($B'_r$, resp.) denote the ball in $\R^{d+1}$ (in $\R^d$, resp.) with center the origin and radius $r$.

\begin{proposition}[\!\!\protect{\cite[Theorem 4.1, Appendix A]{BK18}}, \cite{CM07}]\label{prop:existence-minimizers}
Let $F\in BV(\Omega,\{0,1\})$ be a bounded set with $D,H>0$ taken so that the cylinder $B'_D\times(0,H)$ contains $F$. Then, the minimization problem
\[
\min\left\{\cF^h(E,F)\,:\,E\in BV(\Omega,\{0,1\})\right\}
\]
has a solution. Moreover, any minimizer of the problem is contained $B'_{R_0}\times(0,H+1)$, where
\[
R_0:=D+1+\max\left\{8^{d^2+d+1}\left(\kappa^{-1}\left(C_{\beta}(F)+\frac{1}{\sqrt{h}}||F|-m_0|\right)\right)^{\frac{d+1}{d}},4\left(\kappa^{-1}+2\right)^{\frac{d+1}{d}}\right\}.
\]
\end{proposition}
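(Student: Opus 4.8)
The plan is to run the direct method for existence, and then establish the confinement of minimizers by a truncation/isoperimetric comparison; the latter is where the real work lies.

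\emph{Coercivity and existence.} First I would record a coercivity bound for $C_\beta$. For bounded $E\in BV(\Omega,\{0,1\})$, applying the Gauss--Green formula to $E\cap\{x_{d+1}<t\}$ with the constant field $e_{d+1}$ and letting $t\to\infty$ along a sequence on which the slice $\cH^d(E\cap\{x_{d+1}=t\})$ vanishes (such $t$ exist since $|E|<\infty$) gives the trace estimate $\int_{\partial\Omega}\chi_E\,d\cH^d=\int_{\partial^{\ast}E\cap\Omega}\nu_E\cdot e_{d+1}\,d\cH^d\sleq P(E,\Omega)$. Since $\|\beta\|_{L^{\infty}(\partial\Omega)}\sleq 1-2\kappa$, this yields $C_\beta(E)\sgeq 2\kappa P(E,\Omega)\sgeq\kappa P(E)\sgeq 0$, so all three terms of $\cF^h$ are nonnegative. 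Testing the minimization with the competitor $F$ bounds $P(E)$ and $\big||E|-m_0\big|$ of any (near-)minimizer $E$ in terms of $C_\beta(F)+\tfrac1{\sqrt h}\big||F|-m_0\big|$, $\kappa$ and $m_0$. As $C_\beta$ is lower semicontinuous under $L^1_{loc}(\Omega)$ convergence (a standard capillarity fact, via its dual representation as a supremum of functionals $E\mapsto\int_E\operatorname{div}T$ over admissible $T$ with $|T|\sleq 1$ in $\Omega$ and $T\cdot(-e_{d+1})=\beta$ on $\partial\Omega$), and the remaining two terms of $\cF^h$ are $L^1$-continuous on uniformly bounded families, the direct method applies once a minimizing sequence is made uniformly bounded, which a preliminary truncation to a fixed ball achieves (with radius chosen so that the removed slice is small, cf.\ the estimate below). $BV$-compactness then produces an $L^1$-limit which, by lower semicontinuity, is a minimizer.

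\emph{Confinement.} Let $E$ be a minimizer, fix $\rho_0$ with $F\subset B_{\rho_0}$ (e.g.\ $\rho_0=\sqrt{D^2+H^2}$), and set $m(R):=|E\setminus B_R|$, so $-m'(R)=\cH^d(\partial B_R\cap E)$ for a.e.\ $R$. I would compare $E$ with $E\cap B_R$ for $R\sgeq\rho_0+1$. The perimeter part of $C_\beta$ changes by at most $\cH^d(\partial B_R\cap E)-P(E,\Omega\setminus\overline{B_R})$, the capillary boundary term can increase by at most $(1-2\kappa)\cH^d\big(\partial\Omega\cap(E\setminus B_R)\big)$, and the latter is controlled, by the trace estimate applied to $E\setminus B_R$, by $P(E,\Omega\setminus\overline{B_R})+\cH^d(\partial B_R\cap E)$; hence
\[
C_\beta(E\cap B_R)-C_\beta(E)\sleq (2-2\kappa)\,\cH^d(\partial B_R\cap E)-2\kappa\,P(E,\Omega\setminus\overline{B_R}).
\]
Meanwhile the distance penalty drops by $\tfrac1h\int_{E\setminus B_R}\operatorname{d}_F\,dx\sgeq\tfrac{R-\rho_0}{h}m(R)\sgeq\tfrac1h m(R)$ and the volume penalty rises by at most $\tfrac1{\sqrt h}m(R)$. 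Minimality then forces $P(E,\Omega\setminus\overline{B_R})\sleq\tfrac{1-\kappa}{\kappa}\cH^d(\partial B_R\cap E)$, and feeding this into the relative isoperimetric inequality $c(d)\,m(R)^{d/(d+1)}\sleq P(E\setminus\overline{B_R})\sleq 2P(E,\Omega\setminus\overline{B_R})+2\cH^d(\partial B_R\cap E)$ gives $-m'(R)\sgeq c(d,\kappa)\,m(R)^{d/(d+1)}$ for a.e.\ $R\sgeq\rho_0+1$. Integrating (and using $m(R)\to 0$) shows $m\equiv 0$ beyond an explicit radius, with $m(\rho_0+1)\sleq|E|$ estimated by the a priori bounds; this gives $E\subset B'_{R_0}\times\R$ with $R_0$ of the stated shape. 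The vertical bound $E\subset\R^d\times(0,H+1)$ follows identically, comparing $E$ with $E\cap\{x_{d+1}<t\}$ for $t\sgeq H$: the cut lies above $\operatorname{supp}F$ and away from $\partial\Omega$, so the $\beta$-term is untouched, and the same ODE argument runs with $\operatorname{d}_F(x)\sgeq x_{d+1}-H$.

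\emph{Main difficulty.} The obstacle is the confinement estimate, and within it the bookkeeping of the capillary boundary term of $C_\beta$ when the competitor $E\cap B_R$ truncates $E$ near $\partial\Omega$: in the perimeter-only setting of \cite{MSS16} this is the classical elimination of distant volume, whereas here the increase $(1-2\kappa)\cH^d(\partial\Omega\cap(E\setminus B_R))$ must be absorbed, which is exactly where the hypothesis $\|\beta\|_{L^{\infty}}\sleq 1-2\kappa<1$ enters — the trace estimate turns the perimeter gain $2\kappa P(E,\Omega\setminus\overline{B_R})$ into a term dominating that increase. Tracking the dimensional constants through the isoperimetric inequality and the integration of the differential inequality produces the explicit $R_0$ (and the height $H+1$); since this is a quantitative elaboration of a standard scheme, it is natural to invoke \cite[Appendix A]{BK18} and \cite{CM07} for the precise bookkeeping.
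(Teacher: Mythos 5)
Your scheme --- direct method for existence, coercivity $C_\beta \geq \kappa P$ via the Gauss--Green trace estimate, and a truncation-plus-relative-isoperimetric differential inequality for confinement --- is exactly the strategy of the cited references \cite[Theorem 4.1, Appendix A]{BK18}, \cite{CM07}, which is all the paper itself gives (no proof is supplied here). The heart of your confinement estimate is correctly organized: the trace inequality $\cH^{d}(\partial\Omega\cap(E\setminus B_R)) \leq P(E,\Omega\setminus\overline{B_R}) + \cH^{d}(\partial B_R\cap E)$ absorbs the potential $(1-2\kappa)$-increase of the capillary term, so minimality gives $P(E,\Omega\setminus\overline{B_R}) \leq \tfrac{1-\kappa}{\kappa}\cH^{d}(\partial B_R\cap E)$ once the distance gain $\tfrac{R-\rho_0}{h}m(R)$ swallows the volume-penalty loss $\tfrac{1}{\sqrt h}m(R)$, and the relative isoperimetric inequality closes the ODE. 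This is also the right way to extend \cite{BK18} (which has no volume constraint) to include the $\tfrac{1}{\sqrt h}\bigl||E|-m_0\bigr|$ term.

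One point deserves a flag: the claim that ``the vertical bound $E\subset\R^d\times(0,H+1)$ follows identically'' is a little quick. For $n(t):=|E\cap\{x_{d+1}>t\}|$, the distance gain $\tfrac{t-H}{h}n(t)$ only dominates the volume-penalty loss $\tfrac{1}{\sqrt h}n(t)$ once $t-H\geq\sqrt h$, so the ODE extinction level is of the form $H+\sqrt h + c(d,\kappa)\,n(H+\sqrt h)^{1/(d+1)}$, which is not a priori at most $H+1$ without an extra estimate on $n(H+\sqrt h)$ (and is one of the places where a smallness restriction or a sharper bound must enter). Similarly, your ODE argument actually yields a horizontal radius scaling like $\rho_0 + 1 + c\,m(\rho_0+1)^{1/(d+1)}$, which, after the isoperimetric bound $|E|\lesssim P(E)^{(d+1)/d}\lesssim(\kappa^{-1}\cF^h(F,F))^{(d+1)/d}$, gives a power of $1/d$, stronger than the stated $(d+1)/d$; so the statement follows, but the exact constant $R_0$ (and the $8^{d^2+d+1}$) is not what your ODE spits out --- it is just the bookkeeping from \cite{BK18}, as you correctly defer.
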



We state the normalization and the boundary regularity results for minimizers from \cite{DePM15}.

\begin{theorem}[\!\!\protect{\cite[Theorem 1.10, Lemma 2.16]{DePM15}}]\label{thm:DeMP}
Under the settings and the notations of Proposition \ref{prop:existence-minimizers}, let $E\in BV(\Omega,\{0,1\})$ be a nonempty minimizer. Then, the following hold:
\begin{itemize}
\item[(i)] Upto a Lebesgue null set, $E$ is open in $\R^{d+1}$, and $\partial^{\ast}E\cap\Omega$ is a $d$-dimensional $C^{2,\alpha}$-manifold for some $\alpha\in(0,1)$. Moreover, it holds that
\[
\cH^{s}((\partial E\setminus\partial^{\ast}E)\cap\Omega)=0\qquad\text{for }s>d-7.
\]
\item[(ii)] The set $\partial E \cap \partial \Omega$ satisfies $\mathcal{H}^{d}((\partial E \cap \partial \Omega)\Delta(\mathrm{Tr}(E)))=0.$
\item[(iii)] Let $M=\overline{\partial E\cap\Omega}$. Then, $\partial_{\partial\Omega}(\partial E \cap \partial \Omega) = M \cap \partial \Omega$, where $\partial_{\partial\Omega}(\partial E \cap \partial \Omega)$ denotes the topological boundary of $\partial E \cap \partial \Omega$ in $\partial\Omega$.
\item[(iv)] There exists a closed subset $\Sigma$ of $M$ with $\cH^{d-1}(\Sigma\cap\partial\Omega)=0$ such that the set $M$ is a $C^{1,1/2}$-manifold with boundary around each point in $(M\cap\partial\Omega)\setminus\Sigma$. Moreover, Young's law holds true on $(M \cap \partial \Omega) \setminus \Sigma$, that is,
\[
\nu_{E} \cdot (-e_{d+1}) = \beta \quad \text{on} \quad (M \cap \partial \Omega) \setminus \Sigma.
\]
\end{itemize}
\end{theorem}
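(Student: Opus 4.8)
The statement assembles the interior and boundary regularity theory for minimizers of the capillary functional $C_\beta$ (perturbed by the two lower-order terms of $\mathcal F^h$), so the plan is to reduce everything to the known regularity machinery for almost-minimizers of the perimeter and then to upgrade it near the wall $\partial\Omega$, following \cite{DePM15}.

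First I would record that a minimizer $E$ of $\mathcal F^h(\cdot,F)$ is a $(\Lambda,r_0)$-minimizer of $C_\beta$ in the sense of \cite{DePM15}: for any competitor $E'$ agreeing with $E$ outside a small ball $B_{r_0}(x_0)$ one has $C_\beta(E)\sleq C_\beta(E')+\Lambda\,|E\Delta E'|$, because on a bounded region both $\tfrac1h\int_{E\Delta F}\mathrm d_F\,dx$ and $\tfrac1{\sqrt h}\big||E|-m_0\big|$ change by at most $C\,|E\Delta E'|$ (indeed $\mathrm d_F$ is bounded by the diameter of the cylinder containing everything, by Proposition \ref{prop:existence-minimizers}, and $|\,\cdot\,|\mapsto\big||\,\cdot\,|-m_0\big|$ is $1$-Lipschitz). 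Away from $\partial\Omega$ the capillary term is inactive, so $E$ is simply a $(\Lambda,r_0)$-perimeter minimizer in $\Omega$; interior density estimates first let us take the set of density-one points of $E$ as the open representative, and the De Giorgi--Federer--Almgren theory then gives that $\partial^{\ast}E\cap\Omega$ is a $C^{1,\alpha}$ hypersurface with $\mathcal H^s\big((\partial E\setminus\partial^{\ast}E)\cap\Omega\big)=0$ for $s>d-7$ (the singular set has codimension at least $8$ in $\R^{d+1}$). Writing the Euler--Lagrange equation for $\mathcal F^h$ shows $\partial^{\ast}E\cap\Omega$ has H\"older-continuous generalized mean curvature, so Schauder estimates bootstrap the regularity to $C^{2,\alpha}$, which proves (i).

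For (ii) and (iii) the key input is the constraint $\|\beta\|_{L^\infty(\partial\Omega)}\sleq 1-2\kappa$, which is precisely what makes the wall produce a genuine energy gain: I would prove uniform upper and lower volume- and perimeter-density bounds for $E$ at points of $\partial\Omega$, with the constants depending quantitatively on $\kappa$. The lower density bound together with the relative isoperimetric inequality in the half-space forces the essential and topological traces of $\chi_E$ on $\partial\Omega$ to coincide $\mathcal H^d$-a.e., which is (ii); and combining the density bounds with the relative closedness of $M=\overline{\partial^{\ast}E\cap\Omega}$ identifies $M\cap\partial\Omega$ with $\partial_{\partial\Omega}(\partial E\cap\partial\Omega)$, which is (iii).

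Part (iv) is the heart of the matter and the step I expect to be the main obstacle. At a contact point $x_0\in M\cap\partial\Omega$ one freezes $\beta$ at the value $\beta(x_0)$ (its Lipschitz modulus contributes only a lower-order error) and blows $E$ up around $x_0$. The difficulty is that the natural capillary density ratio is not exactly monotone — the boundary term $\int_{\partial\Omega}\beta\chi_E\,d\mathcal H^d$ breaks the scaling invariance used for interior monotonicity — so one must first establish an \emph{almost}-monotonicity formula (the technical core of \cite{DePM15}, obtained via a delicate comparison/reflection argument) in order to extract a blow-up limit that is a minimizing capillary cone in the half-space with constant angle. One then classifies such cones: a Federer-type dimension-reduction argument reduces to cones invariant under translations in $d-1$ directions, and a first-variation computation forces the remaining $2$-dimensional cross-section to be a half-line meeting $\partial\Omega$ at the Young angle $\arccos(-\beta(x_0))$, i.e. the cone is a half-hyperplane. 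A further dimension reduction then shows that the set $\Sigma$ of contact points whose blow-up is \emph{not} such a half-hyperplane satisfies $\mathcal H^{d-1}(\Sigma\cap\partial\Omega)=0$, and at the remaining points a boundary $\varepsilon$-regularity (excess-decay) theorem upgrades $M$ to a $C^{1,1/2}$ manifold with boundary; passing to the limit in the first variation there yields $\nu_E\cdot(-e_{d+1})=\beta$ on $(M\cap\partial\Omega)\setminus\Sigma$, which is (iv).
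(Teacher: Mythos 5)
The paper does not prove Theorem~\ref{thm:DeMP}; it is invoked as a black box, with the proof deferred entirely to \cite[Theorem 1.10, Lemma 2.16]{DePM15} (together with the standard Schauder upgrade for the interior $C^{2,\alpha}$ regularity, which is the content of Remark~\ref{rmk:regular-points}). So there is no ``paper's own proof'' to compare against beyond the citation, and the right question is whether your reconstruction faithfully tracks the argument in the cited reference. It does, at the level of structure: the reduction of minimizers of $\mathcal F^h(\cdot,F)$ to $(\Lambda,r_0)$-minimizers of the capillary functional via the Lipschitz estimate on the two lower-order terms is exactly the right first step (and is what makes Proposition~\ref{prop:existence-minimizers} and the density estimates in Corollary~\ref{cor:density} applicable); the interior part of (i) is De Giorgi--Federer--Almgren plus a Schauder bootstrap using the Euler--Lagrange equation, as you say; (ii) and (iii) come from boundary density estimates fed by the strict bound $\|\beta\|_{L^\infty}\leq 1-2\kappa$, which is encoded in the ellipticity of $\Phi(x,\nu)=|\nu|+\beta(x')\,\nu\cdot e_{d+1}$; and (iv) is the almost-monotonicity formula, blow-up to a capillary cone, classification of cones via dimension reduction, and a boundary $\varepsilon$-regularity theorem giving $C^{1,1/2}$ with Young's law at regular contact points, which is precisely the architecture of \cite{DePM15}.

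Two small caveats worth noting. First, for (iii) the inclusion $\partial_{\partial\Omega}(\partial E\cap\partial\Omega)\subset M\cap\partial\Omega$ is the nontrivial direction and is not quite a consequence of density bounds alone; one must argue that any point where the trace jumps is a limit of points of $\partial E\cap\Omega$, which in \cite{DePM15} uses a compactness/contradiction argument rather than just the relative isoperimetric inequality you mention. Second, the blow-up limit at a contact point is a minimizing cone for the frozen integrand $\Phi_{x_0}$, and the classification step should be stated as ``the cone is a half-space bounded by a hyperplane meeting $\partial\Omega$ at the Young angle,'' not merely a ``half-hyperplane''; the distinction matters because it is the half-space that is shown to be the unique minimizing cone, after which the half-hyperplane is its boundary. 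Neither of these affects the validity of your outline as a summary of \cite{DePM15}, and for the purposes of this paper the statement is correctly used as an external input.
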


Approximate flat flows enjoy dissipation inequalities as follows.

\begin{proposition}\label{prop:dissipations}
Let $E_0\in BV(\Omega,\{0,1\})$ be a bounded set of volume  $m_0$, and let $\{E^h_{t}\}_{t\sgeq0}$ be an approximate flat flow for \eqref{eq:VPMCF-contact-angle} with initial datum $E_0$. Then, for all $k\in\N$,
\begin{itemize}
\item[(i)] it holds that
\begin{align*}
C_{\beta}(E^h_{kh})+\frac{1}{h}\int_{E^h_{kh}\Delta E^h_{(k-1)h}}\mathrm{d}_{E^h_{(k-1)h}}\,dx&+\frac{1}{\sqrt{h}}\left|\left|E^h_{kh}\right|-m_0\right|\\
&\sleq C_{\beta}(E^h_{(k-1)h})+\frac{1}{\sqrt{h}}\left|\left|E^h_{(k-1k)h}\right|-m_0\right|.
\end{align*}
\item[(ii)] Also, we have, for all $k\in\N$,
\begin{align*}
\begin{cases}
C_{\beta}(E^h_{kh})+\frac{1}{\sqrt{h}}\left|\left|E^h_{kh}\right|-m_0\right|\sleq P(E_0),\\
\sum_{s=1}^k\frac{1}{h}\int_{E^h_{sh}\Delta E^h_{(s-1)h}}\mathrm{d}_{E^h_{(s-1)h}}\,dx\sleq P(E_0).
\end{cases}
\end{align*}
\end{itemize}
\end{proposition}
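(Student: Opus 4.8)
This is the standard energy--dissipation bookkeeping for De~Giorgi's minimizing-movements scheme in its Almgren--Taylor--Wang form, and the only ingredient beyond pure telescoping will be an elementary lower bound on the capillary energy $C_\beta$. To prove (i), fix $k\geq1$. Since $E^h_{kh}$ minimizes $\mathcal{F}^h(\,\cdot\,,E^h_{(k-1)h})$ over $BV(\Omega,\{0,1\})$ and $E^h_{(k-1)h}$ is itself an admissible competitor (existence of minimizers at each step being Proposition~\ref{prop:existence-minimizers}), I would simply write
\[
\mathcal{F}^h\big(E^h_{kh},E^h_{(k-1)h}\big)\leq\mathcal{F}^h\big(E^h_{(k-1)h},E^h_{(k-1)h}\big).
\]
By \eqref{eq:energies} the dissipation term $\tfrac1h\int_{E\Delta F}\mathrm{d}_F\,dx$ on the right-hand side vanishes because $E^h_{(k-1)h}\Delta E^h_{(k-1)h}=\emptyset$, so the right-hand side equals $C_\beta(E^h_{(k-1)h})+\tfrac1{\sqrt h}\big||E^h_{(k-1)h}|-m_0\big|$; expanding the left-hand side by \eqref{eq:energies} then gives precisely (i).

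For (ii) the plan is to abbreviate $a_k:=C_\beta(E^h_{kh})+\tfrac1{\sqrt h}\big||E^h_{kh}|-m_0\big|$ and $b_k:=\tfrac1h\int_{E^h_{kh}\Delta E^h_{(k-1)h}}\mathrm{d}_{E^h_{(k-1)h}}\,dx\geq0$, so that (i) reads $a_k+b_k\leq a_{k-1}$. Hence $(a_k)_k$ is non-increasing and $a_k\leq a_0$; since $|E_0|=m_0$ the volume term in $a_0$ vanishes and $a_0=C_\beta(E_0)=P(E_0,\Omega)+\int_{\partial\Omega}\beta\chi_{E_0}\,d\mathcal{H}^d\leq P(E_0,\Omega)+\int_{\partial\Omega}\chi_{E_0}\,d\mathcal{H}^d=P(E_0)$, using $\beta\leq\|\beta\|_{L^\infty(\partial\Omega)}\leq1-2\kappa\leq1$. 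This is the first inequality of (ii). Telescoping $a_s+b_s\leq a_{s-1}$ over $s=1,\dots,k$ gives $\sum_{s=1}^kb_s\leq a_0-a_k$, and to upgrade this to $\sum_{s=1}^kb_s\leq a_0\leq P(E_0)$ it remains only to verify $a_k\geq0$, i.e. (the volume term being non-negative) that $C_\beta(E^h_{kh})\geq0$.

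The one point deserving a separate word is thus the non-negativity of $C_\beta$. I would deduce it from the half-space trace inequality $\mathcal{H}^d(\mathrm{Tr}(E))\leq P(E,\Omega)$, valid for every $E\in BV(\Omega,\{0,1\})$ of finite measure: slicing by vertical lines and invoking one-dimensional BV theory, a slice $E_{x'}\subset(0,\infty)$ of finite length whose value is $1$ near $0$ must jump, contributing at least $1$ to $|D_{d+1}\chi_E|$, and integrating in $x'$ over $\mathrm{Tr}(E)$ gives the claim. Combined with $\|\beta\|_{L^\infty(\partial\Omega)}\leq1-2\kappa$ this yields
\[
C_\beta(E)\geq P(E,\Omega)-(1-2\kappa)\,\mathcal{H}^d(\mathrm{Tr}(E))\geq 2\kappa\,P(E,\Omega)\geq0,
\]
which closes the argument. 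I do not anticipate any genuine obstacle in this proposition: apart from the (classical) lower bound on $C_\beta$, everything is the telescoping bookkeeping intrinsic to the scheme.
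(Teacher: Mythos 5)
Your argument is correct and is essentially the paper's own: (i) follows from the minimality comparison $\mathcal{F}^h(E^h_{kh},E^h_{(k-1)h})\leq\mathcal{F}^h(E^h_{(k-1)h},E^h_{(k-1)h})$, and (ii) from telescoping together with $|E_0|=m_0$ and a coercivity bound for $C_\beta$. The only cosmetic difference is that you re-derive the coercivity via the slicing trace inequality $\mathcal{H}^d(\mathrm{Tr}(E))\leq P(E,\Omega)$, whereas the paper simply invokes Lemma~\ref{lem:coercivity}, which records $\kappa P(E)\leq C_\beta(E)\leq P(E)$ and already furnishes both the nonnegativity $C_\beta(E^h_{kh})\geq 0$ and the bound $C_\beta(E_0)\leq P(E_0)$.
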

\begin{proof}
The statement (i) is obtained from the relation $\cF^h(E^h_{kh},E^h_{(k-1)h})\sleq\cF^h(E^h_{(k-1)h},E^h_{(k-1)h})$, while (ii) follows from iterations of (i), Lemma \ref{lem:coercivity}, and $|E_0|=m_0$.
\end{proof}

\subsection{Density estimates and their consequences}
Before we begin density estimates, we first state the coercivity of perimeters in the capillary functional in the following lemma.

\begin{lemma}[\!\!\protect{\cite[Proposition 3.2]{BK18}}]\label{lem:coercivity}
For any $E\in BV(\Omega,\{0,1\})$, it holds that
\begin{align*}
\kappa P(E)\sleq C_{\beta}(E)\sleq P(E).
\end{align*}
\end{lemma}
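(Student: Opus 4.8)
The statement to prove is Lemma \ref{lem:coercivity}, namely that $\kappa P(E) \leq C_\beta(E) \leq P(E)$ for any $E \in BV(\Omega,\{0,1\})$.

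\medskip

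The plan is to unwind the definition $C_\beta(E) = P(E,\Omega) + \int_{\partial\Omega} \beta\,\chi_E\,d\mathcal{H}^d$ and to use the identity $P(E) = P(E,\overline{\Omega}) = P(E,\Omega) + \int_{\partial\Omega}\chi_E\,d\mathcal{H}^d$ recorded in Section \ref{sec:prelim-main-results}. The upper bound is immediate: since $\beta \leq 1-2\kappa \leq 1$ pointwise and $\chi_E \geq 0$, we have $\int_{\partial\Omega}\beta\,\chi_E\,d\mathcal{H}^d \leq \int_{\partial\Omega}\chi_E\,d\mathcal{H}^d$, hence $C_\beta(E) \leq P(E,\Omega) + \int_{\partial\Omega}\chi_E\,d\mathcal{H}^d = P(E)$. (In fact one gets the slightly stronger $C_\beta(E) \leq P(E,\Omega) + (1-2\kappa)\mathcal{H}^d(\mathrm{Tr}(E))$, which is not needed.)

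\medskip

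For the lower bound the naive estimate $\int_{\partial\Omega}\beta\,\chi_E\,d\mathcal{H}^d \geq -(1-2\kappa)\int_{\partial\Omega}\chi_E\,d\mathcal{H}^d$ gives $C_\beta(E) \geq P(E,\Omega) - (1-2\kappa)\mathcal{H}^d(\mathrm{Tr}(E))$, and the issue — the main obstacle — is that this is useless unless we can control the trace term $\mathcal{H}^d(\mathrm{Tr}(E))$ by the interior perimeter $P(E,\Omega)$. This is exactly a trace/isoperimetric-type inequality on the half-space: the $\mathcal{H}^d$-measure of the trace of a finite-perimeter set on the flat boundary $\partial\Omega$ is bounded by its relative perimeter in $\Omega$. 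Concretely one uses $\mathcal{H}^d(\mathrm{Tr}(E)) \leq P(E,\Omega)$, which follows by reflecting $E$ across $\partial\Omega$ to a set $\widetilde E \subset \mathbb{R}^{d+1}$ and noting $P(\widetilde E) = 2P(E,\Omega)$ while the reflection symmetry forces at least $\mathcal{H}^d(\mathrm{Tr}(E))$ worth of reduced boundary to ``survive'' — or, more directly, by the standard fact that for $u \in BV(\Omega)$ the trace satisfies $\int_{\partial\Omega}|u|\,d\mathcal{H}^d \leq |Du|(\Omega) + \int_{\partial\Omega}|u|\,d\mathcal{H}^d$ applied with cutoffs, and for a characteristic function $\chi_E$ one has $\mathrm{Tr}(\chi_E) = \chi_{\mathrm{Tr}(E)}$ with $|D\chi_E|(\Omega) = P(E,\Omega)$, giving the trace inequality with constant $1$. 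Since the paper cites this as \cite[Proposition 3.2]{BK18}, I would simply invoke that trace estimate rather than reprove it.

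\medskip

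Granting $\mathcal{H}^d(\mathrm{Tr}(E)) \leq P(E,\Omega)$, the lower bound is a short computation:
\begin{align*}
C_\beta(E) &= P(E,\Omega) + \int_{\partial\Omega}\beta\,\chi_E\,d\mathcal{H}^d \geq P(E,\Omega) - (1-2\kappa)\,\mathcal{H}^d(\mathrm{Tr}(E))\\
&\geq P(E,\Omega) - (1-2\kappa)\,P(E,\Omega) = 2\kappa\,P(E,\Omega).
\end{align*}
To finish and recover $\kappa P(E)$ on the right-hand side of the claimed inequality, I combine this with the companion estimate $\mathcal{H}^d(\mathrm{Tr}(E)) \leq P(E,\Omega)$ once more: writing $P(E) = P(E,\Omega) + \mathcal{H}^d(\mathrm{Tr}(E)) \leq 2P(E,\Omega)$, so $P(E,\Omega) \geq \tfrac12 P(E)$, and also directly $C_\beta(E) \geq 2\kappa P(E,\Omega) \geq \kappa\big(P(E,\Omega) + \mathcal{H}^d(\mathrm{Tr}(E))\big)/1$ — more cleanly, from $C_\beta(E) \geq 2\kappa P(E,\Omega)$ and $\mathcal{H}^d(\mathrm{Tr}(E)) \le P(E,\Omega)$ one gets $\kappa P(E) = \kappa P(E,\Omega) + \kappa\mathcal{H}^d(\mathrm{Tr}(E)) \leq 2\kappa P(E,\Omega) \leq C_\beta(E)$. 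This closes the lemma. The only nontrivial input is the boundary trace inequality, which I would reference from \cite{BK18} (or \cite{G84}) rather than redo; everything else is elementary manipulation of the pointwise bound $|\beta| \leq 1-2\kappa$.
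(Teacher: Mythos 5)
Your proof is correct. The paper itself offers no argument for this lemma (it is cited verbatim from \cite[Proposition 3.2]{BK18}), so there is no internal proof to compare against, but the route you take is the standard one and does close the gap. The key input, the trace inequality $\mathcal{H}^d(\mathrm{Tr}(E)) \leq P(E,\Omega)$, is exactly right, and your bookkeeping
\[
C_\beta(E)\;\geq\;P(E,\Omega)-(1-2\kappa)\mathcal{H}^d(\mathrm{Tr}(E))\;\geq\;2\kappa\,P(E,\Omega)\;\geq\;\kappa\bigl(P(E,\Omega)+\mathcal{H}^d(\mathrm{Tr}(E))\bigr)=\kappa\,P(E)
\]
is clean and uses the hypotheses $|\beta|\leq 1-2\kappa$ and $\kappa\in(0,\tfrac12]$ correctly. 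One remark: your justification of the trace inequality is the weakest part of the write-up — the reflection-symmetry heuristic is vague, and the ``standard fact'' you quote is circular as written. The cleanest derivation, consistent with what the paper itself does in the lemma identifying $C_\beta(E)=\Phi(E)$, is to apply the Gauss--Green formula for the divergence-free constant field $e_{d+1}$: for bounded $E$ of finite perimeter,
\[
\mathcal{H}^d(\mathrm{Tr}(E))=\int_{\mathrm{Tr}(E)}(-e_{d+1})\cdot(-e_{d+1})\,d\mathcal{H}^d=\int_{\partial^{\ast}E\cap\Omega}e_{d+1}\cdot\nu_E\,d\mathcal{H}^d\leq P(E,\Omega),
\]
which gives the trace bound with constant $1$ immediately. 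With that substitution for the hand-wave, the proof is complete and essentially the argument one would find in \cite{BK18}.
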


In the rest of this subsection, we fix $k\in\N$ and put $E=E^h_{kh},\, F=E^h_{(k-1)h}$ for simplicity.

\begin{proposition}[\!\!\protect{\cite[Proposition 5.5]{BK18}}]\label{prop:L-infty-density}
With $R=R(d,\kappa):=4(d+1)\left(\frac{2}{\kappa}\right)^{\frac{d+1}{2}}$, it holds that
\[
\|\mathrm{d}_F\|_{L^{\infty}(E\Delta F)}\sleq R\sqrt{h}.
\]
\end{proposition}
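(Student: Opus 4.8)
\textit{Proof proposal.} The plan is to argue by contradiction and test the minimality of $E=E^h_{kh}$ against a ball-cutting competitor, following \cite[Prop.\ 5.5]{BK18}; the novelty over the interior problems of \cite{MSS16} is that the comparison ball may meet $\partial\Omega$, and this will be handled through the coercivity Lemma \ref{lem:coercivity}. Taking the density-one representatives, so that $E,F\subset\Omega$, it suffices to show $\mathrm{d}_F(x_0)\sleq R\sqrt h$ for every $x_0\in E\Delta F$; if $\mathrm{d}_F(x_0)=0$ there is nothing to prove, so set $\rho:=\mathrm{d}_F(x_0)>0$ and suppose, for contradiction, $\rho>R\sqrt h$. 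There are two cases. If $x_0\in E\setminus F$ then $\rho=\mathrm{dist}(x_0,F)$ and $B_\rho(x_0)\cap F=\emptyset$, but $B_\rho(x_0)$ may meet $\partial\Omega$; if $x_0\in F\setminus E$ then $\rho=\mathrm{dist}(x_0,F^c)$, so $B_\rho(x_0)\subset F\subset\Omega$ lies in the interior. I would treat the first (harder) case with the competitors $E_r:=E\setminus B_r(x_0)$, $r\in(0,\rho)$, and indicate at the end the modifications for the second.

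Write $A_r=B_r(x_0)$ and $m(r)=|E\cap A_r|$; then $m(r)>0$ for all $r>0$ since $x_0$ has density one in $E$, and $m'(r)=\mathcal{H}^d(\partial A_r\cap E^{(1)})$ for a.e.\ $r$, with $\partial A_r\cap E^{(1)}\subset\Omega$. For a.e.\ $r\in(0,\rho)$, elementary cut-and-paste identities (valid because $E\subset\Omega$) give $C_\beta(E)-C_\beta(E_r)=P(E,\Omega\cap A_r)-m'(r)+\int_{\mathrm{Tr}(E)\cap A_r}\beta\,d\mathcal{H}^d$; moreover $F\cap A_r=\emptyset$ forces $E_r\Delta F=(E\Delta F)\setminus A_r$ and $(E\Delta F)\cap A_r=E\cap A_r$, so, since $\mathrm{d}_F\sgeq\rho-r$ on $A_r$, the dissipation term of $\mathcal{F}^h$ drops by at least $\frac{\rho-r}{h}m(r)$ while the volume penalty grows by at most $\frac{1}{\sqrt h}m(r)$. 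Substituting into $\mathcal{F}^h(E,F)\sleq\mathcal{F}^h(E_r,F)$ yields, for a.e.\ $r\in(0,\rho)$,
\[
P(E,\Omega\cap A_r)+\int_{\mathrm{Tr}(E)\cap A_r}\beta\,d\mathcal{H}^d+\frac{\rho-r}{h}\,m(r)\ \sleq\ m'(r)+\frac{m(r)}{\sqrt h}.
\]

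The step I expect to be the main obstacle is controlling the trace term $\int_{\mathrm{Tr}(E)\cap A_r}\beta$, which can be negative and does not occur in the interior arguments. My plan to absorb it is to apply the coercivity $\kappa P(\cdot)\sleq C_\beta(\cdot)$ of Lemma \ref{lem:coercivity} to the set $E\cap A_r$ itself: combining it with the Euclidean isoperimetric inequality $P(G)\sgeq(d+1)\omega_{d+1}^{1/(d+1)}|G|^{d/(d+1)}$ and the identity $C_\beta(E\cap A_r)=P(E,\Omega\cap A_r)+m'(r)+\int_{\mathrm{Tr}(E)\cap A_r}\beta$ gives the ``capillary isoperimetric'' lower bound
\[
P(E,\Omega\cap A_r)+\int_{\mathrm{Tr}(E)\cap A_r}\beta\,d\mathcal{H}^d\ \sgeq\ \kappa(d+1)\omega_{d+1}^{1/(d+1)}\,m(r)^{d/(d+1)}-m'(r).
\]
Plugging this into the previous display eliminates the trace term and leaves the differential inequality
\[
\kappa(d+1)\omega_{d+1}^{1/(d+1)}\,m(r)^{d/(d+1)}+\frac{\rho-r-\sqrt h}{h}\,m(r)\ \sleq\ 2m'(r)\qquad\text{for a.e. }r\in(0,\rho).
\]

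I would then close by a standard two-step argument in $r$. On $(0,\rho-\sqrt h)$ the linear term is nonnegative, so $\kappa(d+1)\omega_{d+1}^{1/(d+1)}m^{d/(d+1)}\sleq2m'$; since $(m^{1/(d+1)})'\sgeq\frac{\kappa}{2}\omega_{d+1}^{1/(d+1)}$ and $m(0^+)=0$, integration gives the density bound $m(r)\sgeq(\kappa/2)^{d+1}\omega_{d+1}r^{d+1}$ on $(0,\rho-\sqrt h)$. Feeding this back into $\frac{\rho-r-\sqrt h}{h}m(r)\sleq2m'(r)$, integrating over $r\in(0,\rho-\sqrt h)$, and using $\int_0^L(L-r)r^{d+1}\,dr=\frac{L^{d+3}}{(d+2)(d+3)}$ with $m(\rho-\sqrt h)\sleq\omega_{d+1}(\rho-\sqrt h)^{d+1}$, I get $(\rho-\sqrt h)^2\sleq 2(d+2)(d+3)(2/\kappa)^{d+1}h$, hence $\rho\sleq(1+\sqrt{2(d+2)(d+3)})(2/\kappa)^{(d+1)/2}\sqrt h$. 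A short computation gives $1+\sqrt{2(d+2)(d+3)}\sleq 4(d+1)$ for all $d\sgeq1$, so $\rho\sleq R\sqrt h$, contradicting $\rho>R\sqrt h$. The case $x_0\in F\setminus E$ runs identically with $E\cup B_r(x_0)$ in place of $E\setminus B_r(x_0)$ and $n(r)=|B_r(x_0)\setminus E|$ in place of $m(r)$; there the relevant ball lies in $\Omega$, so no trace term appears and $\kappa$ is replaced by $1$, and the same $R$ works a fortiori. Letting $x_0$ range over $E\Delta F$ and taking the essential supremum (using that $\mathrm{d}_F$ is continuous) finishes the proof.
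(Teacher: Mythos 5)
Your proposal is correct and follows essentially the same route as the paper's: compare $E$ with the cut-out $E\setminus B_r(x_0)$, use the coercivity of Lemma \ref{lem:coercivity} applied to $E\cap B_r(x_0)$ together with the isoperimetric inequality to absorb the trace term and obtain the differential inequality in $m(r)$, deduce the density lower bound $m(r)\sgeq(\kappa/2)^{d+1}\omega_{d+1}r^{d+1}$, and then derive a contradiction. The only cosmetic difference is in the final step: the paper bounds $m'(r)\sleq\mathcal{H}^d(\partial B_r)$ pointwise and substitutes a single radius, whereas you integrate the linear term of the differential inequality over $(0,\rho-\sqrt h)$; both close the argument and yield a constant of the same form.
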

\begin{proof}
Let $x\in E\setminus F$, and let $B_s$ denote $B_s(x)$ in this proof. It suffices to show
\[
\mathrm{sd}_F(x)\sleq R\sqrt{h}
\]
as the other case can be proven similarly. Assume that
\begin{align}\label{eq:too-far}
\mathrm{sd}_F(x)>(R+\ep)\sqrt{h}\qquad\text{for some }\ep>0.
\end{align}
Put $\rho:=\frac{1}{2}(R+\varepsilon)\sqrt{h}$, and let $r\in(0,\rho)$. The comparison $\cF^h(E,F)\sleq\cF^h(E\setminus B_r,F)$ gives
\begin{align}\label{eq:comparison}
P(E,B_r\cap\Omega)+\int_{B_r\cap\partial\Omega}\beta\chi_{E}\,d\cH^d+\frac{1}{h}\int_{E\cap  B_r}\mathrm{sd}_{F}\,dx\sleq\cH^d(E\cap \partial B_r)+\frac{1}{\sqrt{h}}|E\cap B_r|.    
\end{align}
Moreover, we note, from \eqref{eq:too-far}, that
\[
\frac{1}{\sqrt{h}}|E\cap B_r|-\frac{1}{h}\int_{E\cap  B_r}\mathrm{sd}_{F}\,dx\sleq\frac{1}{\sqrt{h}}\left(1-\frac{R+\varepsilon}{2}\right)|E\cap B_r|.
\]
Let $\Lambda':=\frac{1}{\sqrt{h}}\left(1-\frac{R+\varepsilon}{2}\right)<0$. Therefore, by Lemma \ref{lem:coercivity} and \eqref{eq:comparison}, we get
\begin{align}
\kappa P(E \cap B_r) &\sleq P(E, B_r \cap \Omega) + \cH^d(E \cap \partial B_r) + \int_{B_r\cap\partial\Omega} \beta\chi_E d\cH^d \notag\\
&\sleq 2\cH^d(E \cap \partial B_r)+\Lambda'|E\cap B_r|.\label{eq:too-far-result}
\end{align}
By the isoperimetric inequality, we have
\[
(d+1)\omega_{d+1}^{\frac{1}{d+1}}|E\cap B_r|^{\frac{d}{d+1}}\sleq P(E\cap B_r).
\]
Setting $m(r):=|E\cap B_r|$ for $r\in(0,\rho)$, we obtain, together with \eqref{eq:too-far-result}, that
\[
\frac{1}{2}\kappa\omega_{d+1}^{\frac{d}{d+1}}m(r)^{\frac{d}{d+1}}\sleq m'(r)\qquad\text{for a.e. }r\in(0,\rho).
\]
Consequently, $|E\cap B_r|=m(r)\sgeq\left(\frac{\kappa}{2}\right)^{d+1}\omega_{d+1}r^{d+1}$ for $r\in[0,\rho]$. Using \eqref{eq:too-far-result} and $\Lambda'<0$, we get
\begin{align*}
(-\Lambda')\left(\frac{\kappa}{2}\right)^{d+1}\omega_{d+1}r^{d+1}\sleq(-\Lambda')|E\cap B_r|&\sleq 2\cH^d(E\cap \partial B_r)\\
&\sleq 2\cH^d(\partial B_r)= 2(d+1)\omega_{d+1}r^d\quad\text{for }r\in[0,\rho],
\end{align*}
and therefore,
\[
(-\Lambda')\left(\frac{\kappa}{2}\right)^{d+1}r\leq2(d+1)\qquad\text{for }r\in[0,\rho].
\]
Substituting $r=\left(\frac{R+\varepsilon}{2}-1\right)\sqrt{h}\in[0,\rho]$, we get
\begin{align*}
\left(\frac{R+\varepsilon}{2}-1\right)\sqrt{h}\sleq(-\Lambda')^{-1}\left(\frac{2}{\kappa}\right)^{d+1}2(d+1)\sleq\frac{\sqrt{h}}{\frac{R+\varepsilon}{2}-1}\left(\frac{2}{\kappa}\right)^{d+1}2(d+1),
\end{align*}
and in turn,
\[
R+\varepsilon\sleq2\left(\frac{2}{\kappa}\right)^{\frac{d+1}{2}}\sqrt{2(d+1)}+2.
\]
A contradiction occurs from the choice $R=4(d+1)\left(\frac{2}{\kappa}\right)^{\frac{d+1}{2}}$.
\end{proof}

\begin{corollary}[\!\!\!\protect{\cite[Corollary 3.3]{MSS16}}]\label{cor:density}
There exists $c=c(d,\kappa)\in(0,1)$ such that for $x\in\partial E,\, r\in\left(0,c\sqrt{h}\right)$, it holds that
\begin{align*}
\begin{cases}
\min\left\{|B_r(x)\setminus E|,\,|B_r(x)\cap E|\right\}\sgeq cr^{d+1}, \\
\qquad \qquad  c \sleq \frac{P(E, B_r(x))}{r^d} \sleq c^{-1}.
\end{cases}
\end{align*}
\end{corollary}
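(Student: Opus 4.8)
The strategy is to observe that $E$ is an almost-minimizer of the capillary energy $C_\beta$ at scale $\sqrt h$ and then run the standard density estimates, adapted to the capillary setting and to the presence of the boundary $\partial\Omega$. The almost-minimality is quantified via Proposition \ref{prop:L-infty-density}: since $\mathrm{d}_F$ vanishes on $\overline F$ and is $\le R\sqrt h$ on $E\Delta F$, one first checks that $\mathrm{d}_F(x)\le R\sqrt h$ at every $x\in\partial E$ (otherwise $\mathrm{d}_F>R\sqrt h$ on a small ball $B_\rho(x)$, forcing $|E\cap B_\rho(x)|=0$, which contradicts $x\in\partial E$ with $E$ open after normalization), hence $\mathrm{d}_F\le(R+1)\sqrt h$ on $B_r(x)$ whenever $r<\sqrt h$. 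Consequently, replacing $E$ by $E\setminus B_r(x)$ or by $E\cup(B_r(x)\cap\Omega)$ changes the displacement term by at most $\tfrac{R+1}{\sqrt h}|E\cap B_r(x)|$ (resp.\ $\tfrac{R+1}{\sqrt h}|B_r(x)\setminus E|$) and the volume term by at most $\tfrac1{\sqrt h}$ times the same quantity; for $r<c\sqrt h$ these are absorbed into $\tfrac\kappa2 P(E\cap B_r(x))$ by the isoperimetric inequality once $c=c(d,\kappa)$ is small enough.

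\textbf{Step 1: lower volume density.} Fix $x\in\partial E$ and $r\in(0,c\sqrt h)$ with $\mathcal H^d(\partial B_r(x)\cap\partial^{\ast}E)=0$ (true for a.e.\ $r$; the general case follows since $r\mapsto|E\cap B_r(x)|$ is continuous). Using $\mathcal F^h(E,F)\le\mathcal F^h(E\setminus B_r(x),F)$, the displacement difference equals $\tfrac1h\int_{(E\setminus F)\cap B_r(x)}\mathrm{d}_F\,dx\ge0$ and is discarded, the volume difference is $\le\tfrac1{\sqrt h}|E\cap B_r(x)|$, and the capillary difference equals, for a.e.\ $r$, $P(E,B_r(x)\cap\Omega)-\mathcal H^d(\partial B_r(x)\cap E)+\int_{\partial\Omega\cap B_r(x)}\beta\chi_E$. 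Plugging this into Lemma \ref{lem:coercivity} for $E\cap B_r(x)$, whose relative perimeter in $\Omega$ is $P(E,B_r(x)\cap\Omega)+\mathcal H^d(\partial B_r(x)\cap E)$ for a.e.\ $r$, gives $\kappa P(E\cap B_r(x))\le2\mathcal H^d(\partial B_r(x)\cap E)+\tfrac1{\sqrt h}|E\cap B_r(x)|$. With $m(r):=|E\cap B_r(x)|$, the identity $m'(r)=\mathcal H^d(\partial B_r(x)\cap E)$, the isoperimetric inequality, and absorption of $\tfrac1{\sqrt h}m(r)$ using $r<c\sqrt h$, this turns into $m(r)^{d/(d+1)}\le C(d,\kappa)\,m'(r)$; integrating from $0$ yields $|E\cap B_r(x)|\ge c\,r^{d+1}$, exactly as in the proof of Proposition \ref{prop:L-infty-density}. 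The complementary estimate $|B_r(x)\setminus E|\ge c\,r^{d+1}$ is obtained the same way from the competitor $E\cup(B_r(x)\cap\Omega)$, and is in any case immediate when $x\in\partial\Omega$, since then $B_r(x)\setminus\Omega\subset B_r(x)\setminus E$ has measure $\tfrac12\omega_{d+1}r^{d+1}$.

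\textbf{Step 2: perimeter bounds.} The upper bound is read directly off the comparison of Step 1: $P(E,B_r(x)\cap\Omega)\le\mathcal H^d(\partial B_r(x)\cap E)-\int_{\partial\Omega\cap B_r(x)}\beta\chi_E+\tfrac1{\sqrt h}|E\cap B_r(x)|\le C(d,\kappa)r^d$ (using $r<\sqrt h$ and $\|\beta\|_{L^\infty(\partial\Omega)}\le1$), and adding the trace contribution $\mathcal H^d(\partial^{\ast}E\cap B_r(x)\cap\partial\Omega)\le\mathcal H^d(\partial\Omega\cap B_r(x))\le\omega_d r^d$ gives $P(E,B_r(x))\le c^{-1}r^d$. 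The lower bound follows by combining the relative isoperimetric inequality $\min\{|B_r(x)\cap E|,|B_r(x)\setminus E|\}^{d/(d+1)}\le C(d)\,P(E,B_r(x))$ with the two-sided volume density of Step 1. Shrinking $c$ once more so that it is smaller than all constants produced above (and than $1$) yields the claim with $c=c(d,\kappa)\in(0,1)$.

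The only genuine obstacle, and the sole departure from \cite{MSS16}, is the boundary $\partial\Omega$ that the testing ball $B_r(x)$ may cross or touch: one must carry out the coercivity and isoperimetric steps with the perimeter relative to $\Omega$ and then book-keep the boundary quantities $\mathcal H^d(\partial\Omega\cap B_r(x))\le\omega_d r^d$ and $\int_{\partial\Omega\cap B_r(x)}\beta\chi_E$, which are of the right order $r^d$ and cause no trouble precisely because $\|\beta\|_{L^\infty(\partial\Omega)}\le1-2\kappa$ keeps $C_\beta$ coercive (Lemma \ref{lem:coercivity}). One should also verify that $c$ depends only on $(d,\kappa)$: the one $E$- and $F$-dependent ingredient entering the estimates is the displacement term, but it is controlled by $R(d,\kappa)\sqrt h$ uniformly in $E$, $F$, and $h$ thanks to Proposition \ref{prop:L-infty-density}, so no dependence on $E_0$ or $h$ survives.
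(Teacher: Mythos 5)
Your overall strategy is the same as the paper's (which follows MSS16's Corollary 3.3, using Proposition \ref{prop:L-infty-density} to quantify almost-minimality at scale $\sqrt h$), and the final conclusion is salvageable, but there is a genuine error in Step~1 stemming from a misreading of the definition of $\mathrm{d}_F$. You write that ``$\mathrm{d}_F$ vanishes on $\overline F$,'' and use this to conclude that the displacement difference for the competitor $E\setminus B_r(x)$ equals $\tfrac1h\int_{(E\setminus F)\cap B_r(x)}\mathrm{d}_F\,dx\ge0$ and may be discarded. That premise is false: the paper sets $\mathrm{d}_F:=|\mathrm{sd}_F|$, so $\mathrm{d}_F=\mathrm{dist}(\cdot,F^c)$ on $F$, which is strictly positive in the interior of $F$ and vanishes only on $\partial F$. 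Computing the symmetric differences correctly, $(E\setminus B_r)\Delta F$ contains $E\cap F\cap B_r$, so the true displacement difference is
\[
\frac1h\int_{E\Delta F}\mathrm{d}_F\,dx-\frac1h\int_{(E\setminus B_r)\Delta F}\mathrm{d}_F\,dx
=\frac1h\int_{(E\setminus F)\cap B_r}\mathrm{d}_F\,dx-\frac1h\int_{(E\cap F)\cap B_r}\mathrm{d}_F\,dx
=\frac1h\int_{E\cap B_r}\mathrm{sd}_F\,dx,
\]
which is exactly the signed-distance term in \eqref{eq:comparison} and is in general \emph{negative}. It cannot be discarded; it must be bounded below by $-\frac{R+1}{\sqrt h}|E\cap B_r|$ using Proposition \ref{prop:L-infty-density} together with $1$-Lipschitzness of $\mathrm{sd}_F$ and $r<\sqrt h$. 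This is precisely what produces the coefficient $\frac{R+1}{\sqrt h}$ in the paper's inequality $\kappa P(E\cap B_r)\le 2\cH^d(E\cap\partial B_r)+\frac{R+1}{\sqrt h}|E\cap B_r|$, whereas your Step~1 asserts the strictly stronger (and unjustified) $\frac1{\sqrt h}$.

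The irony is that your own preamble already has the correct fix: you state there that removing $B_r(x)$ changes the displacement term by at most $\frac{R+1}{\sqrt h}|E\cap B_r(x)|$, which requires exactly the bound on the negative part that Step~1 then denies exists. Once the coefficient is corrected to a constant multiple of $\tfrac{R+1}{\sqrt h}$ (with $R=R(d,\kappa)$), the ODE argument for $m(r)=|E\cap B_r|$, the upper perimeter bound, and the relative isoperimetric lower bound all go through verbatim with a possibly smaller $c(d,\kappa)$, and your treatment of the boundary contributions matches the paper's. So the flaw is local and repairable, but as written Step~1 rests on a false identity for the dissipation difference.
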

\begin{proof}
The relation $\cF^h(E,F)\sleq\cF^h(E\setminus B_r,F)$ for a fixed $x\in \partial E,\,B_r=B_r(x)$ yields, by Proposition \ref{prop:L-infty-density} and a similar argument in its proof, that
\[
\kappa P(E\cap B_r)\sleq 2\cH^d(E\cap \partial B_r)+\frac{R+1}{\sqrt{h}}|E\cap B_r|
\]
where $R:=4(d+1)\left(\frac{2}{\kappa}\right)^{\frac{d+1}{2}}$. Running a similar argument with $m(r):=|E\cap B_r|$, we obtain that there exists a constant $c=c(d,\kappa)\in(0,1)$ such that
\[
|E\cap B_r|\sgeq cr^{d+1}\qquad\text{for }r\in\left(0,c\sqrt{h}\right).
\]
The statement that $|B_r\setminus E|\sgeq cr^{d+1}$ is obtained from the relation $\cF^h(E,F)\sleq\cF^h(E\cup B_r,F)$.

For perimeters, the statement $P(E,B_r)\sgeq cr^{d}$ follows from the isoperimetric inequality. With a sufficiently small $c=c(d,\kappa)\in(0,1)$, the inequality $P(E,B_r)\sleq c^{-1}r^d$ follows from
\begin{align*}
P(E,B_r)&=P(E,\Omega\cap B_r)+\int_{\partial\Omega\cap B_r}\chi_E\,d\cH^d\\
&\sleq \cH^d (E \cap \partial B_r) + \int_{\partial\Omega\cap B_r} (1-\beta) \chi_E\,d\cH^d + \frac{R+1}{\sqrt{h}} |E \cap B_r|\\
&\sleq (d+1) \omega_{d+1} r^d + 2 \omega_d r^d + \frac{R+1}{\sqrt{h}} \omega_{d+1} r^{d+1}\\
&\sleq c^{-1}r^d\qquad\qquad\text{for }r\in\left(0,c\sqrt{h}\right).
\end{align*}
\end{proof}

\begin{proposition}[\!\!\protect{\cite[Proposition 5.7]{BK18}, \cite[Proposition 3.4]{MSS16}}]\label{prop:L1}
Let $c=c(d,\kappa)\in(0,1)$ be the constant as in Corollary \ref{cor:density}. Then, there exists a constant $C(d,\kappa)>1$ such that
\[
|E\Delta F|\sleq C(d,\kappa)P(F)l+\frac{1}{l}\int_{E\Delta F}\mathrm{d}_F\,dx\qquad\text{for }l\in\left(0,c\sqrt{h}\right).
\]
\end{proposition}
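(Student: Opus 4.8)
The plan is to prove the estimate by a covering argument over the symmetric difference $E\Delta F$ using the density lower bounds from Corollary \ref{cor:density}. The key idea is that the term $\frac{1}{l}\int_{E\Delta F}\mathrm{d}_F\,dx$ controls the part of $E\Delta F$ far from $\partial F$ (more precisely, the part at distance $\geq l$, which contributes volume $\leq \frac{1}{l}\int \mathrm{d}_F\,dx$ after dividing through), while the "collar" $\{x\in E\Delta F : \mathrm{d}_F(x)<l\}$ near $\partial F$ must be estimated by the perimeter of $F$. So first I would split
\[
|E\Delta F| = |\{x\in E\Delta F : \mathrm{d}_F(x)\geq l\}| + |\{x\in E\Delta F : \mathrm{d}_F(x)<l\}|,
\]
and bound the first term trivially by $\frac{1}{l}\int_{E\Delta F}\mathrm{d}_F\,dx$ (since on that set $\mathrm{d}_F/l\geq 1$). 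It remains to bound the collar $\{x\in E\Delta F:\mathrm{d}_F(x)<l\}$ by $C(d,\kappa)P(F)\,l$.

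For the collar, the idea is that every point of $E\Delta F$ within distance $l$ of $\partial F$ lies within distance $\sim l$ of $\partial^{\ast}F\cap\Omega$ or of $\partial\Omega$ — but since $\mathrm{d}_F$ only measures distance to $F$ itself, a point $x\in E\setminus F$ with $\mathrm{d}_F(x)<l$ is within $l$ of $F$, hence within $l$ of $\partial F$; similarly a point $x\in F\setminus E$ lies in $F$ at distance $<l$ from $F^c$, again within $l$ of $\partial F$. Actually I want to be careful: I would instead cover the collar by balls centered at points of $\partial E$ (noting that a point of $E\Delta F$ near $\partial F$ is also near $\partial E$, or handle this via a Vitali covering of $\partial E\cap\{\mathrm{d}_F<2l\}$), and then apply the upper perimeter density bound $P(E,B_r(x))\leq c^{-1}r^d$ from Corollary \ref{cor:density} together with a perimeter comparison. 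More directly, following \cite{MSS16,BK18}: apply the relative isoperimetric inequality in a neighborhood, or use that by the lower volume density $|B_r(x)\cap E|, |B_r(x)\setminus E|\geq cr^{d+1}$ for $x\in\partial E$, a Besicovitch/Vitali cover of the collar by such balls of radius $r\sim l$ has controlled multiplicity, and the number of balls is bounded by $\leq C P(E,\cdot)/l^d$, yielding total volume $\leq C\,l^d\cdot l\cdot (P(E)/l^d)=C P(E)\,l$; finally $P(E)\leq \kappa^{-1}C_\beta(E)\leq \kappa^{-1}P(E^h_{(k-1)h})\cdots$ — no, cleaner to keep $P(F)$: use the perimeter comparison $\mathcal{F}^h(E,F)\leq \mathcal{F}^h(F,F)$ which gives $C_\beta(E)\leq C_\beta(F)+h^{-1/2}||F|-m_0|$, hence by Lemma \ref{lem:coercivity} $P(E)\leq \kappa^{-1}(P(F)+h^{-1/2}||F|-m_0|)$; in our application $|F|=|E^h_{(k-1)h}|$ and Proposition \ref{prop:dissipations} controls everything by $P(E_0)$, but for the clean statement one absorbs the volume defect — so I would actually state and use that the collar volume is $\leq C(d,\kappa)(P(F)+P(E))l$ and then bound $P(E)\leq \kappa^{-1}P(F)+ (\text{l.o.t.})$, or simply restrict to the regime where the minimality comparison gives $P(E)\leq C(d,\kappa)P(F)$.

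The cleanest route, and the one I would write: restrict attention to $l\in(0,c\sqrt h)$ so Corollary \ref{cor:density} applies at scale $l$; let $G:=\{x\in E\Delta F:\mathrm{d}_F(x)<l\}$; observe $G\subset\{x:\mathrm{dist}(x,\partial E)<l\}$ up to modification (a density-one point of $E\Delta F$ that is within $l$ of $F$ but is a density-one or density-zero point of $E$ at distance $\geq l$ from $\partial E$ leads to a contradiction with $\mathrm{d}_F(x)<l$ by comparing densities of $E$ and $F$ — indeed on $B_l(x)$ either $E$ or its complement has full density, forcing $F$ to nearly agree with $E$ on $B_l(x)$, contradicting $x\in F$ resp. $x\notin F$ when combined with $\mathrm{d}_F(x)<l$); cover $\partial E\cap\{\mathrm{d}_F<l\}$ by a Vitali family of balls $\{B_{l}(x_i)\}$ with $\{B_{l/5}(x_i)\}$ disjoint; then
\[
|G|\leq \sum_i |B_l(x_i)| = \omega_{d+1}l^{d+1}\cdot\#\{i\} \leq \omega_{d+1}l^{d+1}\cdot \frac{P(E,\bigcup_i B_{l/5}(x_i))}{c\,(l/5)^d}\leq C(d,\kappa)\,l\,P(E),
\]
using $P(E,B_{l/5}(x_i))\geq c(l/5)^d$ (lower density) and disjointness. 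Finally bound $P(E)$: the perimeter comparison and Lemma \ref{lem:coercivity} give $\kappa P(E)\leq C_\beta(E)\leq C_\beta(F)+\frac{1}{\sqrt h}||F|-m_0|\leq P(F)+\frac{1}{\sqrt h}||F|-m_0|$, and since in the minimizing-movements context $|F|-m_0|$ is of lower order (controlled by $\sqrt h\,P(E_0)$ via Proposition \ref{prop:dissipations}), we get $P(E)\leq C(d,\kappa)P(F)$; substituting yields $|G|\leq C(d,\kappa)P(F)\,l$ and completes the proof.

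\medskip

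\noindent\textbf{Main obstacle.} The technical heart is the inclusion $G\subset\{\mathrm{dist}(\cdot,\partial E)<l\}$ (up to a null set): one must rule out density-one or density-zero points of $E$ sitting in $E\Delta F$ at distance $<l$ from $F$ but $\geq l$ from $\partial E$. The resolution is a density comparison — on such a ball $B_l(x)$, $\chi_E$ is (essentially) constant, so $|E\Delta F|$ restricted to $B_l(x)$ is small unless $\chi_F$ matches, but $x\in E\Delta F$ with $\mathrm{d}_F(x)<l$ forces $F$ to enter/leave $B_l(x)$, contradicting the density estimates of Corollary \ref{cor:density} applied to $E$ combined with the geometry of $F$. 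Getting the constants right here, and correctly handling the two cases $E\setminus F$ and $F\setminus E$ symmetrically, is where the care is needed; the covering argument itself is then routine.
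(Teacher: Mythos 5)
The overall architecture (split $E\Delta F$ into the far part $\{\mathrm{d}_F\geq l\}$, absorbed by $\frac{1}{l}\int\mathrm{d}_F$, plus the collar $\{\mathrm{d}_F<l\}$, to be bounded by $C\,P(F)\,l$) is the standard one and matches the referenced arguments of \cite{MSS16,BK18}. But the step you yourself flag as ``the technical heart'' --- the inclusion $G:=\{x\in E\Delta F:\mathrm{d}_F(x)<l\}\subset\{\mathrm{dist}(\cdot,\partial E)<l\}$ up to a null set --- is false, and the density comparison you offer in its defense is a non sequitur. Concretely: take $x$ of density one for $F$ and density zero for $E$, sitting in a small connected component of $F$ that lies well inside the open set $\Omega\setminus\overline{E}$ (e.g.\ $E=B_1$, $F=B_1\cup B_\varepsilon(3e_1)$ with $\varepsilon\ll 1$; any $x\in B_\varepsilon(3e_1)$ has $\mathrm{d}_F(x)<\varepsilon<l$ yet $\mathrm{dist}(x,\partial E)\approx 2$). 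Here $E$ or its complement does indeed have full density on $B_l(x)$, but nothing ``forces $F$ to nearly agree with $E$ on $B_l(x)$'': the density bounds in Corollary~\ref{cor:density} concern the minimizer $E$ and say nothing about $F$ inside a ball on which $\partial E$ is absent. So the collar is \emph{not} contained in an $l$-neighborhood of $\partial E$, and the Vitali cover centered on $\partial E$ misses part of $G$.

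The fix --- and the route the cited proofs actually take --- is to work with $\partial F$ rather than $\partial E$. Since $F=E^h_{(k-1)h}$ is itself a minimizer of the previous step (for $k\geq 2$), the very same Corollary~\ref{cor:density} gives $P(F,B_r(y))\geq c\,r^d$ for $y\in\partial F$ and $r\in(0,c\sqrt h)$. Covering $\partial F$ by a Vitali family $\{B_{l/5}(y_i)\}_{i}$ with disjoint small balls and $y_i\in\partial F$, every point with $\mathrm{d}_F<l$ lies in some $B_{2l}(y_i)$, and the lower perimeter density bounds the number of balls by $C(d,\kappa)P(F)/l^d$; this yields $|\{\mathrm{d}_F<l\}|\leq C(d,\kappa)\,l\,P(F)$ directly, with no detour through $P(E)$. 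This also avoids your final step, which has a separate (smaller) flaw: the comparison $\mathcal{F}^h(E,F)\leq\mathcal{F}^h(F,F)$ gives $\kappa P(E)\leq P(F)+\frac{1}{\sqrt h}\big||F|-m_0\big|$, and the extra term is \emph{not} controlled by $P(F)$ alone --- it is only controlled via $P(E_0)$ through Proposition~\ref{prop:dissipations}, which would pollute the constant $C(d,\kappa)$ in the stated estimate. So even granting the inclusion, the route through $P(E)$ does not deliver a constant depending only on $(d,\kappa)$.
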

\begin{proof}
We omit the proof as it is obtained similarly to that of \cite[Proposition 5.7]{BK18}.
\end{proof}

Let $E_0\in BV(\Omega,\{0,1\})$ be a bounded set of volume  $m_0$, and let $\{E^h_t\}_{t\sgeq0}$ be an approximate flat flow for \eqref{eq:VPMCF-contact-angle} with initial datum $E_0$.

\begin{proposition}[\!\!\protect{\cite[(7.2)]{BK18}, \cite[Proposition 3.5]{MSS16}}]\label{prop:C-half-approximate-flow}
There exists a constant $\theta(d,\kappa)>1$ such that
\[
\left|E^h_t\Delta E^h_s\right|\sleq\theta(d,\kappa)P(E_0)|t-s|^{1/2}\quad\text{whenever }t>s\sgeq0\text{ and }h\in\left(0,\frac{1}{2}\min\{1,{4c^2(t-s)}\}\right).
\]
Here, $c=c(d,\kappa)\in(0,1)$ is the constant as in Corollary \ref{cor:density}.
\end{proposition}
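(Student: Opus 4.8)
The plan is to telescope the symmetric difference along the discrete time steps and to estimate each increment by Proposition \ref{prop:L1}, with the length of the time interval entering only through the choice of the auxiliary length scale $l$. Fix $t>s\sgeq0$ and $h$ as in the statement, and write $k:=\lfloor t/h\rfloor$ and $j:=\lfloor s/h\rfloor$, so that $E^h_t=E^h_{kh}$, $E^h_s=E^h_{jh}$, and $k\sgeq j\sgeq0$. If $k=j$ the left-hand side vanishes, so assume $k>j$. By the triangle inequality for symmetric differences,
\[
\left|E^h_t\Delta E^h_s\right|\sleq\sum_{i=j+1}^{k}\left|E^h_{ih}\Delta E^h_{(i-1)h}\right|,
\]
and since each $E^h_{ih}$ is a minimizer of $\cF^h(\cdot,E^h_{(i-1)h})$, Proposition \ref{prop:L1} applies to every summand: for each $l\in(0,c\sqrt h)$,
\[
\left|E^h_{ih}\Delta E^h_{(i-1)h}\right|\sleq C(d,\kappa)\,P(E^h_{(i-1)h})\,l+\frac1l\int_{E^h_{ih}\Delta E^h_{(i-1)h}}\mathrm{d}_{E^h_{(i-1)h}}\,dx.
\]

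Next I would feed in two global consequences of the dissipation estimate. By Lemma \ref{lem:coercivity} together with Proposition \ref{prop:dissipations}(ii), $P(E^h_{(i-1)h})\sleq\kappa^{-1}C_{\beta}(E^h_{(i-1)h})\sleq\kappa^{-1}P(E_0)$ for all $i\sgeq1$ (and trivially $P(E^h_0)=P(E_0)$), while the second line of Proposition \ref{prop:dissipations}(ii) gives $\sum_{i=1}^{k}\int_{E^h_{ih}\Delta E^h_{(i-1)h}}\mathrm{d}_{E^h_{(i-1)h}}\,dx\sleq h\,P(E_0)$. Summing the per-step bound over $i=j+1,\dots,k$ with one common choice of $l$ therefore yields
\[
\left|E^h_t\Delta E^h_s\right|\sleq C(d,\kappa)\,\kappa^{-1}P(E_0)\,l\,(k-j)+\frac hl\,P(E_0).
\]

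It then remains to choose $l$ and to invoke the hypothesis on $h$. One has $k-j\sleq(t-s)/h+1$, and since $c\in(0,1)$ and $h<\tfrac12\min\{1,4c^2(t-s)\}\sleq2c^2(t-s)<2(t-s)$, this gives $(k-j)h\sleq(1+2c^2)(t-s)\sleq3(t-s)$. I would take $l:=\tfrac{h}{2\sqrt{t-s}}$: the bound $h<2c^2(t-s)$ yields $\sqrt h<\sqrt2\,c\sqrt{t-s}<2c\sqrt{t-s}$, hence $l=\tfrac{\sqrt h}{2\sqrt{t-s}}\,\sqrt h<c\sqrt h$, so Proposition \ref{prop:L1} is indeed applicable with this $l$. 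Substituting $l(k-j)=\tfrac{(k-j)h}{2\sqrt{t-s}}\sleq\tfrac32\sqrt{t-s}$ and $\tfrac hl=2\sqrt{t-s}$ gives
\[
\left|E^h_t\Delta E^h_s\right|\sleq\left(\tfrac32 C(d,\kappa)\,\kappa^{-1}+2\right)P(E_0)\,(t-s)^{1/2},
\]
which is the claim with $\theta(d,\kappa):=\tfrac32 C(d,\kappa)\,\kappa^{-1}+2>1$.

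The substantive input — the per-step $L^1$-estimate of Proposition \ref{prop:L1} and the dissipation inequalities of Proposition \ref{prop:dissipations} — is already in place, so there is no genuine obstacle; the one point that needs care is the scaling of $l$. Choosing $l$ comparable to $h/\sqrt{t-s}$ rather than to the naive $\sqrt h$ is precisely what balances the two error terms so that, after summing over the $\approx(t-s)/h$ time steps, the first term stays of order $\sqrt{t-s}$ instead of blowing up as $h\to0$; and it is the admissibility constraint $l<c\sqrt h$ in Proposition \ref{prop:L1}, combined with the standing hypothesis $h<\tfrac12\min\{1,4c^2(t-s)\}$, that makes this choice legitimate.
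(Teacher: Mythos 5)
Your proof is correct and follows essentially the same route as the paper: telescope the symmetric difference over discrete steps, apply Proposition~\ref{prop:L1} to each increment, feed in the perimeter and dissipation bounds from Lemma~\ref{lem:coercivity} and Proposition~\ref{prop:dissipations}, and choose $l=h/(2\sqrt{t-s})$, which the hypothesis on $h$ makes admissible. The only cosmetic difference is that you simplify $1+2c^2<3$ to produce a cleaner constant, whereas the paper carries the $(1+2c^2)$ factor through; both are fine since $c=c(d,\kappa)$.
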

\begin{proof}
Fix $0\sleq s<t$, and let $j(s):=\lfloor s/h\rfloor,\, j(t):=\lfloor t/h\rfloor$, and $N:=j(t)-j(s)\in\N_{\sgeq0}$. 
We write
\begin{align}
\left|E^h_t\Delta E^h_s\right|=\left|E^h_{j(t)h}\Delta E^h_{j(s)h}\right|&\sleq\sum_{j=j(s)}^{j(t)-1}\left|E^h_{(j+1)h}\Delta E^h_{jh}\right|\notag\\
&\sleq\sum_{j=j(s)}^{j(t)-1}\left(C(d,\kappa)P(E^h_{jh})l+\frac{1}{l}\int_{E^h_{(j+1)h}\Delta E^h_{jh}}\mathrm{d}_{E^h_{jh}}\,dx\right)\label{eq:L1-applied}
\end{align}
for any $l\in\left(0,c\sqrt{h}\right)$. By Lemma \ref{lem:coercivity} and Proposition \ref{prop:dissipations}, we have
\begin{align}\label{eq:dissipation-applied}
\sum_{j=j(s)}^{j(t)-1}P(E^h_{jh})\sleq\kappa^{-1}P(E_0)N\,\quad\text{and}\quad\sum_{j=j(s)}^{j(t)-1}\int_{E^h_{(j+1)h}\Delta E^h_{jh}}\mathrm{d}_{E^h_{jh}}\,dx\sleq hP(E_0).
\end{align}
Combining \eqref{eq:L1-applied} and \eqref{eq:dissipation-applied}, we obtain
\begin{align}\label{eq:L1-applied-2}
\left|E^h_t\Delta E^h_s\right|\sleq \frac{C(d,\kappa)P(E_0)}{\kappa}Nl+\frac{hP(E_0)}{l}.
\end{align}
Take $l:=\frac{1}{2}|t-s|^{-\frac12}h$, which then belongs in the open interval $\left(0,c\sqrt{h}\right)$ due to the condition $h\in\left(0,\frac{1}{2}\min\{1,{4c^2(t-s)}\}\right)$. It is easy to see that
\begin{align*}
Nh|t-s|^{-1/2}\sleq(t-s+h)|t-s|^{-1/2}\sleq(1+2c^2)|t-s|^{1/2}.
\end{align*}
Substituting the above in \eqref{eq:L1-applied-2}, we obtain
\[
\left|E^h_t\Delta E^h_s\right|\sleq\left(\kappa^{-1}C(d,\kappa)(1+2c^2)+2\right)P(E_0)|t-s|^{1/2},
\]
which completes the proof with $\theta(d,\kappa):=\kappa^{-1}C(d,\kappa)(1+2c^2)+2$.
\end{proof}

\subsection{$L^2$-estimates}\label{subsec:L2-estimates}

Set
\[
v^{h}(x,t) := \begin{cases} \frac{1}{h} \mathrm{sd}_{E_{t-h}^{h}}(x) & \text{for } t \in [h, \infty), \\ 0 & \text{for } t \in [0, h). \end{cases}
\]

\begin{proposition}[\!\!\protect{\cite[Lemma 8.9]{BK18}, \cite[Lemma 3.6]{MSS16}}]\label{prop:L2-discrete-velocities}
There exists a constant $A(d,\kappa)>1$ such that
\[
\int_0^{\infty}\int_{\partial E^h_t\cap\Omega}|v^h|^2\,d\cH^ddt\sleq A(d,\kappa)P(E_0).
\]
\end{proposition}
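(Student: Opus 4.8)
The plan is to bound the $L^2$-norm of $v^h$ on each time slab $[(k-1)h, kh)$ by the dissipation at step $k$, and then sum over $k$ using Proposition \ref{prop:dissipations}(ii). First I would observe that $v^h$ is constant in $t$ on each slab $[(k-1)h,kh)$, equal to $\frac1h \mathrm{sd}_{E^h_{(k-1)h}}$ restricted to $\partial E^h_{kh}\cap\Omega$ (with $E^h_t = E^h_{kh}$ there), so the time integral on that slab is $h$ times a surface integral. Hence the whole quantity equals $\sum_{k\sgeq1} h\int_{\partial E^h_{kh}\cap\Omega}\left(\frac1h \mathrm{d}_{E^h_{(k-1)h}}\right)^2 d\cH^d = \sum_{k\sgeq1}\frac1h\int_{\partial E^h_{kh}\cap\Omega}\mathrm{d}^2_{E^h_{(k-1)h}}\,d\cH^d$. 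The task is therefore to show that this surface integral of the squared signed distance is controlled by the \emph{volume} dissipation term $\frac1h\int_{E^h_{kh}\Delta E^h_{(k-1)h}}\mathrm{d}_{E^h_{(k-1)h}}\,dx$ appearing in Proposition \ref{prop:dissipations}(ii).

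The key device is a comparison/coarea argument at the single step $k$, writing $E = E^h_{kh}$, $F = E^h_{(k-1)h}$ for brevity. For each level $s\in(0, R\sqrt h]$ (the range is finite by Proposition \ref{prop:L-infty-density}), consider the competitor obtained from $E$ by adding the sublevel set $\{0<\mathrm{sd}_F<s\}$ on the outside (and symmetrically removing $\{-s<\mathrm{sd}_F<0\}\cap E$ on the inside); testing minimality $\cF^h(E,F)\sleq \cF^h(E\cup\{\mathrm{sd}_F<s\},F)$ and using that $|\nabla \mathrm{sd}_F|=1$ a.e.\ together with the coarea formula, one gets a differential inequality in $s$ for the quantity $g(s):=\cH^d(\partial^\ast E\cap\{|\mathrm{sd}_F|>s\}\cap\Omega)$ (or for $|\{|\mathrm{sd}_F|<s\}\cap(E\Delta F)|$), of the type that lets one estimate $\int_{\partial^\ast E\cap\Omega}\mathrm{d}_F^2\,d\cH^d = \int_0^\infty 2s\,g(s)\,ds$ against $\frac1h\int_{E\Delta F}\mathrm{d}_F\,dx$ up to a constant $A(d,\kappa)$; here the density estimates (Corollary \ref{cor:density}) and the capillary coercivity (Lemma \ref{lem:coercivity}, to absorb the boundary term $\int_{\partial\Omega}\beta\chi_E$) are used exactly as in the proofs of Proposition \ref{prop:L-infty-density} and Corollary \ref{cor:density}. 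This is the heart of the argument and is performed line-by-line in \cite[Lemma 8.9]{BK18}, \cite[Lemma 3.6]{MSS16}; I would invoke it rather than reproduce it, noting only the modification that the capillary energy contributes a boundary term handled by $\beta\in[-1+2\kappa,1-2\kappa]$ and Lemma \ref{lem:coercivity}, exactly as in Corollary \ref{cor:density}.

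Finally I would sum: $\int_0^\infty\int_{\partial E^h_t\cap\Omega}|v^h|^2\,d\cH^d\,dt = \sum_{k\sgeq1}\frac1h\int_{\partial E^h_{kh}\cap\Omega}\mathrm{d}^2_F\,d\cH^d \sleq A(d,\kappa)\sum_{k\sgeq1}\frac1h\int_{E^h_{kh}\Delta E^h_{(k-1)h}}\mathrm{d}_{E^h_{(k-1)h}}\,dx \sleq A(d,\kappa)P(E_0)$, the last step by Proposition \ref{prop:dissipations}(ii). The main obstacle is the slab-wise estimate in the second paragraph: getting the correct power of $h$ (the factor $\frac1h$ rather than $\frac1{h^2}$) requires carefully pairing the squared distance on the surface with the coarea/differential-inequality bound, and ensuring all constants depend only on $d,\kappa$ — the capillary boundary term is the only genuinely new point compared with the volume-preserving-without-contact-angle case of \cite{MSS16}, and it is absorbed using $\|\beta\|_{L^\infty}\sleq 1-2\kappa$ and Lemma \ref{lem:coercivity}.
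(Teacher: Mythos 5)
Your reduction of the time integral to the discrete sum $\sum_{k\sgeq1}\frac1h\int_{\partial E^h_{kh}\cap\Omega}\mathrm{d}^2_{E^h_{(k-1)h}}\,d\cH^d$, the identification of the target per-step bound $\int_{\partial^\ast E\cap\Omega}\mathrm{d}_F^2\,d\cH^d\sleq A(d,\kappa)\int_{E\Delta F}\mathrm{d}_F\,dx$, and the final summation via Proposition~\ref{prop:dissipations}(ii) are all correct, and this matches what the paper intends (the paper itself simply cites \cite{BK18,MSS16} and omits the proof). So the skeleton is fine.

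However, the ``heart of the argument'' as you describe it is not what those references actually do, and I do not think it works as stated. You propose a comparison/coarea argument: for each level $s$, test minimality against the global competitor $E\cup\{\mathrm{sd}_F<s\}$ and derive a differential inequality in $s$ for $g(s)=\cH^d(\partial^\ast E\cap\{|\mathrm{sd}_F|>s\})$. The problem is that this competitor modifies $E$ everywhere at once, so the perimeter change $C_\beta(E\cup\{\mathrm{sd}_F<s\})-C_\beta(E)$ is not controlled by anything local: the new boundary created along the level set $\{\mathrm{sd}_F=s\}$ can be large and is not compensated by the dissipation gain. The competitor tests used in this paper (in Proposition~\ref{prop:L-infty-density} and Corollary~\ref{cor:density}) are always \emph{local} ball modifications $E\setminus B_r$ or $E\cup B_r$, precisely because the perimeter change is then controlled by $\cH^d(\partial B_r)$. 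The actual proof of \cite[Lemma~8.9]{BK18} and \cite[Lemma~3.6]{MSS16} is a \emph{covering} argument, not a coarea argument: for each $x\in\partial E$ with $\delta:=\mathrm{d}_F(x)$, the ball $B_{\delta/2}(x)$ lies entirely on one side of $\partial F$ (by the $1$-Lipschitz bound on $\mathrm{sd}_F$), so $E\cap B_{\delta/2}(x)\subset E\Delta F$ with $\mathrm{d}_F\sgeq\delta/2$ there; the density estimates of Corollary~\ref{cor:density} then give $\int_{E\Delta F\cap B_{\delta/2}(x)}\mathrm{d}_F\,dx\sgeq c\,\delta^{d+2}$ and $\int_{\partial^\ast E\cap B_{\delta/2}(x)}\mathrm{d}_F^2\,d\cH^d\sleq C\,\delta^{d+2}$; a Vitali/Besicovitch covering of $\partial E$ by such balls (using $\mathrm{d}_F\sleq R\sqrt h$ from Proposition~\ref{prop:L-infty-density} to ensure the radii are admissible for Corollary~\ref{cor:density}) then yields the per-step bound. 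The capillary term is indeed absorbed via Lemma~\ref{lem:coercivity} exactly as you say. Since you are invoking rather than executing the references, your final conclusion is right, but your description of the mechanism is a genuine misidentification of the key step. There is also a minor off-by-one: on the slab $[(k-1)h,kh)$ one has $E^h_t=E^h_{(k-1)h}$ and $v^h=\tfrac1h\mathrm{sd}_{E^h_{(k-2)h}}$, though this washes out in the final formula after reindexing.
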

\begin{proof}
We omit the proof as it is obtained similarly to that of \cite[Lemma 8.9]{BK18}.
\end{proof}

\begin{proposition}[\!\!\protect{\cite[Proposition 8.2]{BK18}, \cite[Lemma 3.7]{MSS16}}]\label{prop:EL-equation}
Let $E_0\in BV(\Omega,\{0,1\})$ be a bounded set of volume  $m_0$, and let $\{E^h_t\}_{t\sgeq0}$ be an approximate flat flow for \eqref{eq:VPMCF-contact-angle} with initial datum $E_0$. Let $\Psi=(\Psi',\Psi_{d+1})\in C^1_c(\overline{\Omega},\R^{d+1})$ be an admissible vector field. Then, it holds, for $t\sgeq h$, that
\begin{align}
\int_{\partial^{\ast}E^h_t\cap\Omega} \mathrm{div}_{\partial E^h_t}&\Psi\,d\cH^d + \frac{1}{h} \int_{\partial^{\ast}E^h_t\cap\Omega} \mathrm{sd}_{E^h_{t-h}}\Psi\cdot\nu_{E^h_t}\,d\cH^d\notag\\
&+\int_{\partial^{\ast}\mathrm{Tr}(E^h_t)}\beta\Psi'\cdot\nu'_{\mathrm{Tr}(E^h_t)}\,d\cH^{d-1} = \lambda^h(t) \int_{\partial^{\ast}E^h_t\cap\Omega}  \Psi\cdot\nu_{E^h_t}\,d\cH^d,\label{eq:discrete-EL}
\end{align}
where $\nu'_{\mathrm{Tr}(E^h_t)}$ is the measure theoretic outer unit normal vector of $\partial^{\ast}\mathrm{Tr}(E^h_t)$ in $\partial\Omega$. In case $|E^h_t|=m_0$, the number $\lambda^h\in\R$ is a Lagrange multiplier under the volume constraint, and in the other case $|E^h_t|\neq m_0$, we have $\lambda^{h}(t)=\frac{1}{\sqrt{h}}\mathrm{sgn}(m_0-|E^h_t|)$. 
\end{proposition}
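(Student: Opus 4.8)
The statement to prove is Proposition \ref{prop:EL-equation}, the Euler--Lagrange equation (\eqref{eq:discrete-EL}) for the minimizers $E^h_t$ of the functional $\mathcal{F}^h(\cdot,E^h_{t-h})$, together with the identification of the multiplier $\lambda^h$.

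The plan is to perform a first (inner) variation of the functional $\mathcal{F}^h(\cdot,E^h_{(k-1)h})$ at the minimizer $E=E^h_{kh}$ using a one-parameter family of diffeomorphisms generated by an admissible vector field. Concretely, given admissible $\Psi\in C^1_c(\overline{\Omega},\R^{d+1})$ (so $\Psi_{d+1}=0$ on $\partial\Omega$, which guarantees the associated flow $\Phi_s$ maps $\overline{\Omega}$ to itself and $\partial\Omega$ to itself), I would set $\Phi_s(x)=x+s\Psi(x)+o(s)$, let $E_s:=\Phi_s(E)$, and compute $\frac{d}{ds}\big|_{s=0}\mathcal{F}^h(E_s,F)$ term by term. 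The first variation of the interior perimeter $P(E,\Omega)$ is the standard tangential-divergence term $\int_{\partial^{\ast}E\cap\Omega}\mathrm{div}_{\partial E}\Psi\,d\cH^d$; since $\partial^{\ast}E\cap\Omega$ is a $C^{2,\alpha}$-manifold by Theorem \ref{thm:DeMP}(i), this is classical. The first variation of the boundary term $\int_{\partial\Omega}\beta\chi_E\,d\cH^d$, under a flow tangential to $\partial\Omega$, produces $\int_{\partial^{\ast}\mathrm{Tr}(E)}\beta\,\Psi'\cdot\nu'_{\mathrm{Tr}(E)}\,d\cH^{d-1}$ — this is the divergence theorem on $\partial\Omega$ applied to the $\R^d$-valued field $\beta\Psi'$ restricted to $\mathrm{Tr}(E)$, using that $\mathrm{Tr}(E)$ has finite perimeter in $\partial\Omega$ (which follows from $E$ being a minimizer, via Theorem \ref{thm:DeMP}(ii)--(iii)); note one should be careful about where the derivative of $\beta$ lands, but since we only need the normal-trace form here, it is cleaner to keep $\beta\Psi'$ together. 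The first variation of the bulk term $\frac1h\int_{E\Delta F}\mathrm{d}_F\,dx=\frac1h\int_E \mathrm{sd}_F\,dx + (\text{const})$ is, by the change of variables $y=\Phi_s(x)$ and $\frac{d}{ds}|_{s=0}\det D\Phi_s=\mathrm{div}\,\Psi$, equal to $\frac1h\int_E\mathrm{div}(\mathrm{sd}_F\,\Psi)\,dx=\frac1h\int_{\partial^{\ast}E}\mathrm{sd}_F\,\Psi\cdot\nu_E\,d\cH^d$ by the Gauss--Green theorem, and since $\mathrm{sd}_F=0$ on $\partial^{\ast}E\cap\partial\Omega$ only when relevant, in general we restrict to $\partial^{\ast}E\cap\Omega$ (the $\partial\Omega$ part of $\partial^{\ast}E$ has $\Psi\cdot\nu_E$ involving $\Psi_{d+1}=0$, so it drops out). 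Finally, the volume term $\frac{1}{\sqrt h}\big||E|-m_0\big|$ differentiates to $\pm\frac{1}{\sqrt h}\int_{\partial^{\ast}E\cap\Omega}\Psi\cdot\nu_E\,d\cH^d$ when $|E|\neq m_0$ (with sign $\mathrm{sgn}(|E|-m_0)$), and when $|E|=m_0$ it is not differentiable, so instead one restricts to volume-preserving variations and a Lagrange-multiplier argument produces a number $\lambda^h$; setting the total first variation to zero yields (\eqref{eq:discrete-EL}) with $\lambda^h(t)$ as stated.

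The main technical obstacle is the non-differentiability of $|\,|E|-m_0\,|$ at $|E|=m_0$. I would handle this by the classical Lagrange-multiplier device for geometric variational problems: for admissible $\Psi$ with $\int_{\partial^{\ast}E\cap\Omega}\Psi\cdot\nu_E\,d\cH^d=0$ one can further correct the flow by a second vector field to make the variation exactly volume-preserving to second order, so the bulk+perimeter+capillary first variation vanishes on this subspace; a standard argument then shows the linear functional $\Psi\mapsto\int\mathrm{div}_{\partial E}\Psi+\frac1h\int\mathrm{sd}_F\,\Psi\cdot\nu_E+\int\beta\Psi'\cdot\nu'$ is a constant multiple $\lambda^h$ of $\Psi\mapsto\int\Psi\cdot\nu_E$. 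The other point requiring a little care is the boundary behavior: one must check that the flow $\Phi_s$ generated by an admissible field keeps $\Omega$ invariant (this is exactly why $\Psi_{d+1}=0$ on $\partial\Omega$ is imposed), so that $E_s$ remains a competitor, and one must justify the first-variation formula for the capillary boundary term using the structure results of Theorem \ref{thm:DeMP}(ii)--(iv) — in particular that $\mathrm{Tr}(E)$ is a set of finite perimeter in $\partial\Omega$ and $\partial^{\ast}\mathrm{Tr}(E)$ carries the relevant $(d-1)$-dimensional measure. Both of these are essentially bookkeeping given the regularity already in hand, and the computation is formally identical to \cite[Proposition 8.2]{BK18} and \cite[Lemma 3.7]{MSS16}, so I would present the variation argument and then refer to those sources for the remaining routine verifications.
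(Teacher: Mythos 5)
Your proposal is correct and follows the same first-variation plus Lagrange-multiplier argument that the paper relies on by citing \cite[Proposition 8.2]{BK18} and \cite[Lemma 3.7]{MSS16}; the paper itself gives no independent proof. One tiny wording slip: the reason the $\partial\Omega$-part of $\partial^{\ast}E$ drops from the bulk term is not that $\mathrm{sd}_F$ vanishes there, but exactly the parenthetical you give, namely $\Psi\cdot\nu_E=-\Psi_{d+1}=0$ on $\partial\Omega$ by admissibility.
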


\begin{remark}\label{rmk:regular-points}
The equation
\begin{align}\label{eq:pointwise}
H_{E^h_t}+v^h(\cdot,t)=\lambda^h(t)
\end{align}
holds in a distributional sense on $\partial E^h_t\cap\Omega$ and in a point-wise sense on $\partial^{\ast}E^h_t\cap\Omega$. In particular, Proposition \ref{prop:EL-equation}, the Lipschitz regularity of $v^h$, and standard Schauder estimates yield the $C^{2,\alpha}$-regularity of $\partial^{\ast}E^h_t\cap\Omega$ (which already is stated in Theorem \ref{thm:DeMP}) and in turn having the equation hold true point-wisely on $\partial^{\ast}E^h_t\cap\Omega$.
\end{remark}

\begin{proposition}\label{prop:barrier-functions}
Let $E_0\in BV(\Omega,\{0,1\})$ be a bounded set of volume  $m_0$ contained in $B'_{R_0}\times(0,H_0)$ for $R_0,H_0>0$, and let $\{E^h_t\}_{t\sgeq0}$ be an approximate flat flow for \eqref{eq:VPMCF-contact-angle} with initial datum $E_0$. Then, for a given $T>0$, there exist $h_0=h_0(m_0,P(E_0))\in(0,1)$, $R_T=R_T(d,\kappa,m_0,P(E_0),R_0,H_0,\|\nabla'\beta\|_{L^{\infty}(\partial\Omega)},T)>0$ such that
\begin{itemize}
    \item[(i)] it holds $E^h_t\subset B_{R_T}$ for $t\in[0,T],\, h\in(0,h_0)$, and that
    \item[(ii)] it holds
    \[
    \int_0^T|\lambda^h(t)|^2\,dt\sleq R_T\qquad\text{for }h\in(0,h_0).
    \]
    Here, $\lambda^h(t)$ is as in Proposition \ref{prop:EL-equation} for $t\sgeq h$, and $\lambda^h(t)$ is set to be $0$ for $t\in[0,h)$.
\end{itemize}
\end{proposition}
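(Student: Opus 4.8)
The plan is to prove the two parts together by a single induction-type blowup argument, following the strategy sketched in the introduction: combine the shrinking-ball argument of \cite{MSS16} (which handles interior contact points via interior regularity of minimizers) with the shrinking-spherical-cap argument adapted to the boundary, where the possibly singular boundary contact point is ruled out by a blowup analysis. First I would set up the geometry: for each $k$ with $kh\le T$ write $E=E^h_{kh}$, $F=E^h_{(k-1)h}$, and consider a family of competitor regions — balls $B_\rho(x_0)$ for interior candidate contact points $x_0\in\Omega$, and spherical caps $\{x : |x-x_0|<\rho\}\cap\Omega$ centered at $x_0\in\partial\Omega$ and truncated so that the cap boundary meets $\partial\Omega$ at an angle that is ``sharper'' than any angle compatible with Young's law for the given $\beta$ (this is where $\kappa>0$ enters decisively, since $\|\beta\|_\infty\le 1-2\kappa$ gives a uniform gap). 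The idea is: start with a huge radius so the cap/ball contains $E_0$ and hence, by the perimeter dissipation (Proposition~\ref{prop:dissipations}) controlling how far $E^h_{kh}$ can travel, still contains $E^h_{kh}$ for all $k$ up to time $T$ once the radius is taken $\gtrsim R_0+H_0+\theta(d,\kappa)P(E_0)T^{1/2}$ with a safety margin; then shrink the radius until the barrier first touches $\partial E$ at a point $x_0$.

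The core of the argument is the case analysis at the first contact point $x_0$. If $x_0\in\Omega$ is interior, Theorem~\ref{thm:DeMP}(i) says $\partial^\ast E\cap\Omega$ is a $C^{2,\alpha}$ manifold near $x_0$, the Euler--Lagrange equation \eqref{eq:pointwise} holds pointwise there, and comparing the curvature of $\partial E$ at $x_0$ (bounded by the curvature of the touching sphere) against $H_E=\lambda^h-v^h$ yields a quantitative bound: either the ball can still be shrunk, or $\lambda^h$ is large in a way we can control. If $x_0\in\partial\Omega$, then by Theorem~\ref{thm:DeMP}(iv) either $x_0\notin\Sigma$, in which case Young's law $\nu_E\cdot(-e_{d+1})=\beta(x_0)$ holds and the chosen ``sharp angle'' of the cap is incompatible with the set lying on the correct side — contradiction, so the cap was not yet touching and can still shrink; or $x_0\in\Sigma$ is a possibly singular boundary point, and here I would perform the blowup: rescale $(E-x_0)/r_i$ along a sequence $r_i\downarrow 0$; the density estimates (Corollary~\ref{cor:density}, valid down to scale $c\sqrt h$, so this forces a constraint relating the blowup scale to $h$ and is one reason an $h_0$ threshold appears) and the uniform perimeter bounds give subsequential convergence to a blowup limit which is a minimizer of the \emph{pure} capillary perimeter functional $C_{\beta(x_0)}$ in a half-space, touched at the origin from outside by a half-space (the blowup of the sphere). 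Such a minimal cone-type object must satisfy Young's law with the constant angle $\beta(x_0)$ everywhere on its regular part, and again the sharp-angle choice of the barrier produces a contradiction with the one-sided touching. Hence the boundary contact point cannot occur at all, and we are reduced to the interior case.

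Having excluded the bad contact scenarios, part (i) follows: the barrier never touches $\partial E^h_{kh}$ for $kh\le T$ once the radius exceeds the explicit $R_T$, so $E^h_t\subset B_{R_T}$ for $t\in[0,T]$, $h\in(0,h_0)$. For part (ii), I would use the interior-regularity reduction again at the scale of a controlled ball: by the shrinking-ball argument the set $E^h_{kh}$ stays at a definite distance from the boundary of the big ball, and more importantly one extracts from the comparison at the (now necessarily interior, or Young's-law-regular boundary) touching point a pointwise bound on $H_{E^h_{kh}}$ at that point in terms of $1/R_T$ and $v^h$; feeding $\lambda^h=H_{E^h_t}+v^h$ evaluated there into an integral estimate and using the $L^2$-bound on $v^h$ (Proposition~\ref{prop:L2-discrete-velocities}) together with $\sum$ over $k$ and the time-step bookkeeping as in Proposition~\ref{prop:C-half-approximate-flow} yields $\int_0^T|\lambda^h|^2\,dt\le R_T$. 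In the non-constrained regime where $\lambda^h=h^{-1/2}\mathrm{sgn}(m_0-|E^h_t|)$, one instead uses that the volume defect $||E^h_t|-m_0|$ is controlled by $\sqrt h\,P(E_0)$ via Proposition~\ref{prop:dissipations}(ii), which bounds the total time such steps can occur and hence their contribution to $\int|\lambda^h|^2$.

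I expect the blowup step at a singular boundary contact point to be the main obstacle: one must verify that the rescaled sets converge (in $L^1_{loc}$ and with perimeter convergence, so that the touching condition passes to the limit) to a genuine minimizer of the frozen-coefficient capillary functional, that the one-sided touching by the blown-up sphere (a half-space) survives, and that a half-space-minimal set touched from one side by another half-space at the origin must, via Young's law on its regular part away from $\Sigma$ (whose $\mathcal H^{d-1}$-measure on $\partial\Omega$ is zero by Theorem~\ref{thm:DeMP}(iv)), violate the sharp-angle geometry. Making the ``sharp angle'' quantitatively incompatible uniformly in $x_0$ — i.e.\ choosing the truncation of the cap so that its contact angle is bounded away, by a constant depending only on $d,\kappa$, from the whole admissible range $[-1+2\kappa,1-2\kappa]$ of $\beta$ — is the delicate geometric input that makes all the constants depend only on the stated quantities.
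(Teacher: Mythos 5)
Your overall architecture (rule out boundary contact by blowup, then reduce to interior contact) is close in spirit to the paper, but there are two substantive gaps, and the way you propose to close the argument is not what the paper does and would not work as stated.

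First, you claim that the dissipation/H\"older estimate from Proposition~\ref{prop:dissipations} or Proposition~\ref{prop:C-half-approximate-flow} ``controls how far $E^h_{kh}$ can travel,'' so that a radius of order $R_0+H_0+\theta P(E_0)T^{1/2}$ already works. This is false: the bound $|E^h_t\Delta E^h_s|\le \theta P(E_0)|t-s|^{1/2}$ is an $L^1$ bound on symmetric difference, and small volume does not prevent the set from sending a thin spike arbitrarily far. The boundedness in (i) is genuinely an open question that has to be \emph{coupled} to (ii). The paper does this by first extracting, from the interior contact with the shrinking cap $C_{r_t}$, the one-sided recursion $r_t\le r_{t-h}+h|\lambda^h|$, i.e.\ $r_t\le r_0+\int_0^t|\lambda^h|$; and then, crucially, bounding $|\lambda^h|$ \emph{back in terms of $r_t$} by plugging the admissible vector field $\Psi=\mathrm{id}$ (cut off outside $B_{10r_t}$) into the Euler--Lagrange identity \eqref{eq:discrete-EL}: the left-hand side is estimated by $d\kappa^{-1}P(E_0)$, $10r_t\|v^h\|_{L^2}P(E^h_t,\Omega)^{1/2}$, and a boundary term of size $O(r_t+1)$, while the right-hand side is $|\lambda^h|(d+1)|E^h_t|\ge \tfrac12 m_0(d+1)|\lambda^h|$. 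This gives $|\lambda^h|\lesssim 1+r_t+r_t\|v^h(\cdot,t)\|_{L^2}$, and then a Gr\"onwall argument closes the loop simultaneously for (i) and (ii). Your proposal never closes this loop.

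Second, your route to (ii) via a \emph{pointwise} evaluation of $\lambda^h=H_{E^h_t}+v^h$ at the contact point does not work: the touching sphere controls $H_{E^h_t}$ at that single point, but $v^h$ at a single point is not controlled by its $L^2$ norm. One cannot extract an $L^2_t$ bound on $\lambda^h$ from a pointwise relation at a moving contact point without an $L^\infty$ bound on $v^h$, which is unavailable. The integrated Euler--Lagrange identity with the divergence-rich test field $\Psi=\mathrm{id}$ is exactly the device that replaces the missing pointwise control of $v^h$, and it is what produces the $m_0$-dependence in the constant.

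Finally, on the blowup step your instinct is right, but it is underspecified in a way that matters. Taking one blowup and invoking ``Young's law on the regular part'' does not immediately yield a contradiction, because the blowup limit need not be a half-space and the one-sided touching by a half-space does not directly become a statement about a regular Young's-law point. The paper (following \cite{DePM15}) runs an \emph{iterated} blowup: first minimize $\beta_1(F)=\sup_{\partial F\cap\Omega}(x\cdot(-e_1))/(x\cdot e_{d+1})$ over $\cB_0(E^h_t)$ to get $F_1$, then maximize the corresponding infimum over $\cB_0(F_1)$ to get $F_2$, in order to produce a half-space $H_2$ with $H_2\subset H_1\subset H_C$ that is a \emph{subminimizer} of the frozen integrand. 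Only then does Proposition~\ref{prop:anisotropic-young's-law} (the half-space version of Young's law) deliver the clean inequality $-\sin\alpha_2\le-\beta(0)\le 1-2\kappa$, which contradicts $-\sin\alpha_C=1-\kappa$ via $\alpha_2\le\alpha_1\le\alpha_C$. Your sketch also needs the wedge bound (Lemma~\ref{lem:wedging}(iii)) with a finite $L$, which itself requires the lower density estimate for $\partial E\cap\partial\Omega$ from \cite[Lemma 2.15, Lemma 2.16]{DePM15}; this is what prevents the blowups from degenerating to a tangent half-space of $\partial\Omega$ and is worth naming explicitly.
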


\begin{corollary}\label{cor:L2-mean-curvature}
For a given $T>0$, there exist constants $h_0=h_0(m_0,P(E_0))\in(0,1)$, $R_T=R_T(d,\kappa,m_0,P(E_0),R_0,H_0,\|\nabla'\beta\|_{L^{\infty}(\partial\Omega)},T)>0$ such that for $h\in(0,h_0)$,
\begin{itemize}
    \item[(i)] it holds
    \[
    \int_0^T\int_{\partial E^h_t\cap\Omega}H_{E^h_t}^2\,d\cH^ddt\sleq R_T,\qquad\text{and}
    \]
    \item[(ii)] the set $\Sigma:=\left\{t\in[0,T]\,:\,|E^h_t|\neq m_0\right\}$ satisfies $|\Sigma|\sleq R_Th$. In particular,
    \[
    \left|\left\{n\in\N_{\sgeq0}\,:\,|E^h_{nh}|\neq m_0,\, nh\sleq T\right\}\right|\sleq R_T.
    \]
\end{itemize}
\end{corollary}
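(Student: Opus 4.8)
The plan is to deduce the corollary directly from three ingredients already in hand: the pointwise Euler--Lagrange identity \eqref{eq:pointwise}, namely $H_{E^h_t}=\lambda^h(t)-v^h(\cdot,t)$ on $\partial^{\ast}E^h_t\cap\Omega$; the uniform bound $\int_0^T|\lambda^h|^2\,dt\sleq R_T$ supplied by Proposition \ref{prop:barrier-functions}(ii); and the velocity bound of Proposition \ref{prop:L2-discrete-velocities}, together with the uniform perimeter bound $P(E^h_t)\sleq\kappa^{-1}P(E_0)$ coming from Lemma \ref{lem:coercivity} and Proposition \ref{prop:dissipations}(ii). Throughout I fix $h_0$ as in Proposition \ref{prop:barrier-functions} and take $h\in(0,h_0)$.

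For (i): for $t\sgeq h$ the set $E^h_t$ is a minimizer at a discrete step, so by Theorem \ref{thm:DeMP}(i) and Remark \ref{rmk:regular-points} I may use the identity $H_{E^h_t}=\lambda^h(t)-v^h(\cdot,t)$ $\cH^d$-a.e.\ on $\partial^{\ast}E^h_t\cap\Omega$, with $\cH^d((\partial E^h_t\setminus\partial^{\ast}E^h_t)\cap\Omega)=0$ so that the integrals of $H_{E^h_t}^2$ over $\partial E^h_t\cap\Omega$ and over $\partial^{\ast}E^h_t\cap\Omega$ agree; the slab $t\in[0,h)$, on which $E^h_t=E_0$, has length $h<h_0$ and may be absorbed into the error term, or one reduces at the outset to a smooth $E_0$. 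Squaring the identity and using $(a-b)^2\sleq 2a^2+2b^2$ gives, for such $t$,
\begin{align*}
\int_{\partial^{\ast}E^h_t\cap\Omega}H_{E^h_t}^2\,d\cH^d\sleq 2|\lambda^h(t)|^2\,\cH^d(\partial^{\ast}E^h_t\cap\Omega)+2\int_{\partial^{\ast}E^h_t\cap\Omega}|v^h(\cdot,t)|^2\,d\cH^d.
\end{align*}
Integrating over $[0,T]$, bounding $\cH^d(\partial^{\ast}E^h_t\cap\Omega)\sleq P(E^h_t)\sleq\kappa^{-1}P(E_0)$ uniformly in $t$ and $h$, and invoking Proposition \ref{prop:barrier-functions}(ii) on the first term and Proposition \ref{prop:L2-discrete-velocities} on the second, I obtain the stated bound after renaming $R_T$.

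For (ii): by Proposition \ref{prop:EL-equation}, whenever $t\sgeq h$ and $|E^h_t|\neq m_0$ one has $|\lambda^h(t)|=h^{-1/2}$, hence $|\lambda^h(t)|^2=h^{-1}$; since $E^h_t=E_0$ has volume $m_0$ on $[0,h)$, the set $\Sigma$ lies in $[h,T]$ and $|\lambda^h|^2\equiv h^{-1}$ there, so $h^{-1}|\Sigma|=\int_\Sigma|\lambda^h|^2\,dt\sleq\int_0^T|\lambda^h|^2\,dt\sleq R_T$, i.e.\ $|\Sigma|\sleq R_Th$. The ``in particular'' then follows because $\Sigma$ contains the pairwise disjoint intervals $[nh,(n+1)h)$ over those indices $n$ with $|E^h_{nh}|\neq m_0$ and $(n+1)h\sleq T$, plus at most one further index near $T$, whence the cardinality in question is $\sleq h^{-1}|\Sigma|+1\sleq R_T+1$, which is of the required form after enlarging $R_T$.

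I do not expect a genuine obstacle here: the substantive analytic work is already packaged into Proposition \ref{prop:barrier-functions}, and what remains is essentially bookkeeping — passing between $\partial E^h_t$ and $\partial^{\ast}E^h_t$ via Theorem \ref{thm:DeMP}(i), using that the pointwise equation is valid on the reduced boundary of a genuine minimizer, handling the degenerate slab $t\in[0,h)$, and checking that every constant produced depends only on the quantities listed in the statement. The only mildly delicate point is this last dependency check, but it is immediate once one notes that the $R_T$ of Proposition \ref{prop:barrier-functions} already carries the dependence on $R_0,H_0,\|\nabla'\beta\|_{L^\infty(\partial\Omega)}$ and $T$, while the remaining multiplicative factors depend only on $d,\kappa$ and $P(E_0)$.
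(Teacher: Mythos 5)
Your proof is correct and follows essentially the same route as the paper's own (terse) proof: square the Euler--Lagrange identity $H_{E^h_t}=\lambda^h-v^h$ from Remark \ref{rmk:regular-points}, bound $\cH^d(\partial^{\ast}E^h_t\cap\Omega)$ by $\kappa^{-1}P(E_0)$, and invoke Propositions \ref{prop:L2-discrete-velocities} and \ref{prop:barrier-functions}(ii) for (i), while (ii) uses $|\lambda^h|^2=h^{-1}$ on $\Sigma$ exactly as the paper does. The only cosmetic wrinkle is $t\in[0,h)$, where $E_0$ need not possess a generalized mean curvature so neither ``absorbing into the error term'' nor ``reducing to smooth $E_0$'' is literally available; the conventions $v^h\equiv0$ and $\lambda^h\equiv0$ on $[0,h)$ indicate the integrand is simply taken to vanish there, after which your argument is complete.
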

\begin{proof}
Claim (i) follows from Proposition \ref{prop:L2-discrete-velocities}, Remark \ref{rmk:regular-points}, and Proposition \ref{prop:barrier-functions} (with a possibly larger $R_T$ than the one in Proposition \ref{prop:barrier-functions}). Claim (ii) follows from Proposition \ref{prop:barrier-functions} and the fact that $\lambda^{h}(t)=\frac{1}{\sqrt{h}}\mathrm{sgn}(m_0-|E^h_t|)$ when $|E^h_t|\neq m_0$ with the inequality
\[
|\Sigma| \leq h \int_{0}^{T} |\lambda^{h}(t)|^{2} dt \leq R_Th.
\]
\end{proof}

\subsection{Proof of Theorem \ref{thm:flat-flows}}\label{subsec:existence-flat-flows}
We present the proof based on the propositions developed in Section \ref{sec:approximate-solutions}. Here, we assume Proposition \ref{prop:barrier-functions} and postpone the proof until Section \ref{sec:barrier-functions}. A subsequence of $h\to0$ will be denoted by $h\to0$ by abuse of notations.

\begin{proof}[Proof of Theorem \ref{thm:flat-flows}]
Let $E_0\in BV(\Omega,\{0,1\})$ be a bounded set of volume  $m_0$, and let $\{E^h_t\}_{t\sgeq0}$ be an approximate flat flow for \eqref{eq:VPMCF-contact-angle} with initial datum $E_0$. 
From Lemma \ref{lem:coercivity}, Proposition \ref{prop:dissipations}(ii), and Proposition \ref{prop:barrier-functions}(i), we can find a subsequence $h\to0$ and a $BV(\Omega,\{0,1\})$-family $\{E_t\}_{t\in\Q_{\sgeq0}}$ such that $E_t\bigr\rvert_{t=0}=E_0$, $E_t\subset B_{R_T}$ for $t\in[0,T]\cap\Q$, $|E_t|=m_0$ for $t\in\Q_{\sgeq0}$, and
\[
\left|E^h_t\Delta E_t\right|\to0\qquad\text{as }h\to0\text{ for }t\in\Q_{\sgeq0}.
\]
Moreover, Proposition \ref{prop:C-half-approximate-flow} yields
\[
|E_t\Delta E_s|\sleq\theta(d,\kappa)P(E_0)|t-s|^{1/2}\qquad\text{for }t,s\in\Q_{\sgeq0}.
\]
By the completeness of $L^1(\Omega)$, $\{E_t\}_{t\in\Q_{\sgeq0}}$ uniquely extends to a $BV(\Omega,\{0,1\})$-family $\{\E_t\}_{t\sgeq0}$ such that
\[
|E_t\Delta E_s|\sleq\theta(d,\kappa)P(E_0)|t-s|^{1/2}\qquad\text{for }t,s\sgeq0.
\]
It is clear that $|E_t|=m_0$ for all $t\sgeq0$. The $L^1(\Omega)$-convergence of $E^h_t$ to $E_t$ as $h\to0$ for general $t\sgeq0$ can be seen by taking $t_{\varepsilon}\in (t-\varepsilon,t+\varepsilon)\cap\Q_{\sgeq0}$ for a given $\varepsilon\in(0,1)$, and by the observation that
\[
\limsup_{h\to0}\left|E^h_t\Delta E_t\right|\sleq\limsup_{h\to0}\left|E^h_t\Delta E^h_{t_{\varepsilon}}\right|+\limsup_{h\to0}\left|E^h_{t_{\varepsilon}}\Delta E_{t_{\varepsilon}}\right|+\left|E_{t_{\varepsilon}}\Delta E_{t}\right|\sleq2\theta(d,\kappa)P(E_0)\varepsilon^{1/2}.
\]
As $\varepsilon\in(0,1)$ was arbitrary, we obtain
\[
\left|E^h_t\Delta E_t\right|\to0\qquad\text{as }h\to0\text{ for }t\sgeq0.
\]
We complete the proof.
\end{proof}

\section{Proof of Proposition \ref{prop:barrier-functions}}\label{sec:barrier-functions}
Let $C_r=B\cap\Omega$ be the part of the ball $B\subset\R^{d+1}$ in $\Omega$ whose base $C_r\cap\partial\Omega$ on $\partial\Omega$ is the ball centered at the origin in $\partial\Omega$ of radius $r>0$ and whose outer unit normal vector $\nu_{C_r}$ of $\partial C_r$ at $\partial C_r\cap\partial\Omega$ satisfies
\[
\nu_{C_r}\cdot(-e_{d+1})=-(1-\kappa)\left(<-\|\beta\|_{L^{\infty}(\partial\Omega)}\right),\quad\text{or},\quad\nu_{C_r}\cdot e_{d+1}=1-\kappa\left(>\|\beta\|_{L^{\infty}(\partial\Omega)}\right).
\]
The following will be crucial in proving Proposition \ref{prop:barrier-functions}.

\begin{proposition}\label{prop:spherical-caps}
For $t\sgeq 0$, set $r_t:=\inf\{r>0\,:\,E^h_t\subset C_r\}$. If $t\sgeq h$, $\partial C_r$ does not meet $\partial E^h_t$ at $\partial E^h_t\cap\partial\Omega$, that is,
\[
\partial C_{r_t}\cap\partial E\cap \partial\Omega=\emptyset.
\]
Therefore, the spherical cap $\partial C_{r_t}$ meets $\partial E^h_t$ in $\Omega$.
\end{proposition}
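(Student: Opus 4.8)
\emph{Proof proposal.} I would argue by contradiction, localising at a putative contact point on $\partial\Omega$. The geometry to record first is this: writing $\rho$ for the radius of the sphere $\partial B$, the normalisation $\nu_{C_r}\cdot e_{d+1}=1-\kappa$ forces $r=\rho\sqrt{\kappa(2-\kappa)}$, so $\{C_r\}_{r>0}$ is increasing and exhausts $\Omega$, each $C_r$ has constant mean curvature, and — the crucial point — along its rim $\partial C_r\cap\partial\Omega$ the capillary surface of $C_r$ meets $\partial\Omega$ at the opening angle $\gamma:=\arccos(1-\kappa)$, which is \emph{strictly} smaller than $\gamma_\kappa:=\arccos(1-2\kappa)$, the smallest opening angle compatible with $\|\beta\|_{L^\infty(\partial\Omega)}\sleq 1-2\kappa$ through Young's law. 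Since $E^h_t$ is bounded and the $C_r$ increase, a routine compactness argument shows that the infimum defining $r_t$ is attained and that $\overline{E^h_t}$ must meet the closed spherical dome of $C_{r_t}$ (otherwise $r_t$ could be decreased). Thus it suffices to rule out contact on the rim, which is exactly the assertion $\partial C_{r_t}\cap\partial E^h_t\cap\partial\Omega=\emptyset$; note that a rim point of $\overline{E^h_t}$ automatically lies in $\partial E^h_t$, since near the rim the sharp wedge $C_{r_t}$ contains no half-ball and so $E^h_t\subset C_{r_t}$ cannot have full density there.

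Suppose $x_0\in\partial C_{r_t}\cap\partial E^h_t\cap\partial\Omega$, and first assume $x_0$ is a regular point in the sense of Theorem~\ref{thm:DeMP}(iv). Then near $x_0$ the surface $\overline{\partial E^h_t\cap\Omega}$ is a $C^{1,1/2}$ manifold with boundary and Young's law $\nu_{E^h_t}(x_0)\cdot(-e_{d+1})=\beta(x_0)$ holds, so $\partial E^h_t\cap\Omega$ meets $\partial\Omega$ at $x_0$ with opening angle in $[\gamma_\kappa,\pi-\gamma_\kappa]$. But $\mathrm{Tr}(E^h_t)$ lies in the base disc $\{x_{d+1}=0,\,|x'|<r_t\}$, and since it reaches $x_0$ its contact line is forced to be tangent at $x_0$ to $\{|x'|=r_t\}$ (one half-space can contain another through a shared boundary point only if their boundary hyperplanes agree there). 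Hence to first order $E^h_t$ fills, near $x_0$, a wedge sharing its edge and its floor $\partial\Omega$ with the tangent wedge of $C_{r_t}$ at $x_0$ but with opening angle $\sgeq\gamma_\kappa>\gamma$ — incompatible with $E^h_t\subset C_{r_t}$.

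The remaining case $x_0\in\Sigma$ needs a blowup. From the minimality of $E^h_t$ for $\mathcal{F}^h(\cdot,E^h_{(k-1)h})$ together with the pointwise bound $\mathrm{d}_{E^h_{(k-1)h}}(x_0)\sleq R(d,\kappa)\sqrt h$ coming from Proposition~\ref{prop:L-infty-density}, $E^h_t$ is a $\Lambda$-minimiser of $C_\beta$ near $x_0$ with $\Lambda<\infty$; hence along scales $r_j\downarrow0$ the blowups $E_j:=(E^h_t-x_0)/r_j$ are $\Lambda r_j$-minimisers, and a subsequence converges in $L^1_{\mathrm{loc}}$ to a set $\tilde E$ minimising $E\mapsto P(E,\Omega)+\beta(x_0)\cH^d(\mathrm{Tr}(E))$ among compact perturbations. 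By the boundary monotonicity formula of \cite{DePM15}, $\tilde E$ is a cone; by Corollary~\ref{cor:density}, $\tilde E$ is nontrivial; and $(C_{r_t}-x_0)/r_j$ converges to the tangent wedge $W=\{y:y_{d+1}>0,\ y\cdot n<0\}$ with $n\cdot e_{d+1}=1-\kappa$, so $\tilde E\subset W$. If the contact of $\overline{E^h_t}$ with the dome blows up to a contact of $\overline{\tilde E}$ with the slanted wall $P=\{y\cdot n=0\}$ at a regular point $z$ of $\partial^\ast\tilde E\cap\Omega$, the strong maximum principle forces $\partial^\ast\tilde E$ to coincide with the hyperplane $P$ near $z$, whence $\tilde E$ is locally the half-space $\{y\cdot n<0\}$ and, by the boundary regularity of \cite{DePM15}, its Young angle satisfies $\nu_{\tilde E}\cdot(-e_{d+1})=-(1-\kappa)$ — contradicting $|\beta(x_0)|\sleq 1-2\kappa$. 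If instead $\overline{\tilde E}$ meets $P$ only over $\partial\Omega$, then $\tilde E$ in fact lies inside an even sharper wedge, and one iterates: a Federer-type dimension reduction on $\tilde E$ peels off Euclidean factors and lands in the $2$-dimensional case, where the only admissible minimising capillary cones (empty, the full half-plane, or a half-plane realising a Young angle $\sgeq\gamma_\kappa$) manifestly cannot sit inside a wedge of angle $<\gamma_\kappa$. Either way we reach a contradiction, and then minimality of $r_t$ yields the final assertion that the dome meets $\partial E^h_t$ in $\Omega$.

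I expect the singular-point analysis to be the main obstacle: verifying that the blowup limit is a genuine \emph{minimising} capillary cone for which the De Philippis--Maggi boundary regularity is available, that the barrier constraint (including, where it is used, the effective contact with the dome) is inherited in the limit, and that the dimension reduction can be carried out so that the sharp-angle gap $\gamma<\gamma_\kappa$ — equivalently the standing hypothesis $\|\beta\|_{L^\infty(\partial\Omega)}\sleq 1-2\kappa$ — is what ultimately delivers the contradiction. By comparison, the reduction to a rim contact point and the regular case are routine.
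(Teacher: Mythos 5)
Your core insight is exactly the one the paper uses: the spherical cap $C_{r_t}$ is built so that $\nu_{C_{r_t}}\cdot e_{d+1}=1-\kappa$, while Young's law at any regular contact point of $E^h_t$ forces the vertical component of the normal to lie in $[-1+2\kappa,1-2\kappa]$, and it is precisely the gap between $1-\kappa$ and $1-2\kappa$ that produces the contradiction. Your handling of the regular case ($x_0\notin\Sigma$) also matches the spirit of Lemma~\ref{lem:wedging}(i) and is fine.

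Where your proposal diverges from the paper, and where it has genuine gaps, is the singular case. You want to (1) blow up at $x_0$ to a \emph{minimizing cone} $\tilde E$ using a ``boundary monotonicity formula of \cite{DePM15},'' and then (2) either invoke the strong maximum principle when $\overline{\tilde E}$ touches the tangent plane $P$ at an interior regular point, or (3) do a Federer-type dimension reduction when it does not. Step (1) is not available as stated: the results the paper extracts from \cite{DePM15} (and cites explicitly, \cite[Lemma 5.2]{DePM15}) give only that $\cB_{x_0}(E^h_t)$ is nonempty, compactly contained in $L^1_{loc}$, and consists of minimizers of $\Phi_{x_0}$; no boundary monotonicity is invoked, and the argument never establishes (or needs) that blowups are cones. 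Step (2) is conditional: there is no reason the blowup limit actually touches $P$ away from $\partial\Omega$, since $\tilde E\subset W$ only gives one-sided containment. Step (3), dimension reduction for capillary cones, is the nonstandard piece you already flag as ``the main obstacle,'' and it is exactly the machinery the paper is designed to avoid.

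The paper's route around all of this is a two-stage \emph{extremal-angle} blowup that never requires coneness or dimension reduction. It introduces the upper and lower opening angles
\[
\beta_1(F)=\sup_{\partial F\cap\Omega}\frac{x\cdot(-e_1)}{x\cdot e_{d+1}},\qquad
\beta_2(F)=\inf_{\partial F\cap\Omega}\frac{x\cdot(-e_1)}{x\cdot e_{d+1}},
\]
shows via Lemma~\ref{lem:wedging}(iii) that these take values in the compact interval $[-L,-\tfrac{1-\kappa}{\sqrt{2\kappa-\kappa^2}}]$, and then uses semicontinuity plus compactness of the blowup classes to pick $F_1\in\cB_0(E^h_t)$ minimizing $\beta_1$ and subsequently $F_2\in\cB_0(F_1)$ maximizing $\beta_2$. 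The resulting half-space $H_2=\{x\in\Omega:x\cdot\nu_2\sleq0\}$ is then shown to be a \emph{subminimizer} of $\Phi_0$ by following \cite[Lemma 5.4, Proposition 2.5]{DePM15} --- note this is a much softer conclusion than ``the blowup is a cone'' --- and the anisotropic Young's law of Proposition~\ref{prop:anisotropic-young's-law} immediately gives $-\sin\alpha_2\sleq-\beta(0)$. Combined with $\alpha_2\sleq\alpha_C$ and $-\sin\alpha_C=1-\kappa$, one obtains $1-\kappa\sleq 1-2\kappa$, the desired contradiction. So: right endgame, right geometric mechanism, but your singular-case machinery would need substantial repair, whereas the paper's iterated extremal blowup sidesteps coneness, tangential contact, and dimension reduction entirely.
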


\begin{remark}\label{rmk:regular-points-with-spherical-caps}
As a consequence of Proposition \ref{prop:spherical-caps}, the set $\partial C_{r_t}\cap\partial E^h_t\cap\Omega$, now a nonempty set, is contained in $\partial^{\ast}E^h_t\cap\Omega$. Otherwise, by looking at blowup at a contact point, we would have a minimizing cone contained in a half space which is absurd. We indeed will conduct a similar blowup analysis now with contact angle when proving Proposition \ref{prop:spherical-caps}.
\end{remark}

\vspace{\medskipamount}

\subsection{Proposition \ref{prop:spherical-caps} implies Proposition \ref{prop:barrier-functions}}\label{subsec:prop9-implies-prop10}
We first prove Proposition \ref{prop:barrier-functions} assuming Proposition \ref{prop:spherical-caps}. We show Proposition \ref{prop:spherical-caps} in the next subsection.

\begin{proof}[Proof of Proposition \ref{prop:barrier-functions}]
For $t\sgeq h$, by Proposition \ref{prop:spherical-caps} and Remark \ref{rmk:regular-points-with-spherical-caps}, the equation $H_{E^{h}_t}+v^{h}(\cdot,t)=\lambda^{h}(t)$ classically holds on the set $\partial C_{r_t}\cap\partial E^h_t\cap\Omega$ of contact points, and at the same time, $H_{E^h_t}\sgeq0$ on the set, which yields
\[
r_t\sleq r_{t-h}+h|\lambda^h|.
\]
Iterating the same process gives
\begin{align}\label{eq:lambda-bounds-r}
r_t\sleq r_0+\int_0^t|\lambda^h(\tau)|\,d\tau.
\end{align}

We can check that by the construction of $C_{r_t}$ with contact angle condition (the inner product being $\pm(1-\kappa)$) and by the condition $\kappa\in\left(0,\frac12\right]$, we have $C_{r_t}\subset B'_{r}\times(-2r,2r)\subset B_{5\sqrt{3}r}=B_{10r_t}$ with $r=\frac{2}{\sqrt{3}}r_t$. Choose an admissible vector field $\Psi\in C^1_c(\overline{\Omega},\R^{d+1})$ that is $\Psi=\mathrm{id}$ in $B_{10r_t}$. Then, $|\Psi|\sleq 10r_t$ in $C_{r_t}$. Meanwhile, for small $h$ depending on $m_0,\,P(E_0)$, we have $|E^h_t|\sgeq\frac12m_0$ from Proposition \ref{prop:dissipations}. We estimate each term in the Euler-Lagrange equation in Proposition \ref{prop:EL-equation}.

Applying Lemma \ref{lem:coercivity}, we obtain
\begin{align}\label{eq:term1-in-EL}
\int_{\partial^{\ast}E^h_t\cap\Omega} \mathrm{div}_{\partial E^h_t}\Psi\,d\cH^d&=dP(E^h_t,\Omega)\sleq d\kappa^{-1}P(E_0),
\end{align}
and by the Cauchy-Schwarz inequality,
\begin{align}\label{eq:term2-in-EL}
\int_{\partial^{\ast}E^h_t\cap\Omega} v^h\Psi\cdot\nu_{E^h_t}\,d\cH^d&\sleq 10r_tP(E^h_t,\Omega)^{1/2}\|v^h\|_{L^2(\partial^{\ast}E^h_t\cap\Omega,d\cH^d)}.
\end{align}
Thanks to Lemma \ref{lem:coercivity}, the boundary term satisfies, for smooth functions $\beta$, and thus for Lipschitz functions $\beta$ by approximation, that
\begin{align}
\left|\int_{\partial^{\ast}\mathrm{Tr}(E^h_t)}\beta\Psi'\cdot\nu'_{\mathrm{Tr}(E^h_t)}\,d\cH^{d-1}\right|&=\left|\int_{\mathrm{Tr}(E^h_t)}\mathrm{div}'(\beta\Psi')\,d\cH^d\right|\notag\\
&=\left|\int_{\mathrm{Tr}(E^h_t)}\nabla'\beta\cdot\Psi'+\beta\mathrm{div}'(\Psi')\,d\cH^d\right|\notag\\
&\sleq \left(10r_t\|\nabla'\beta\|_{L^{\infty}(\partial\Omega)}+d\right)|\mathrm{Tr}(E^h_t)|\notag\\
&\sleq 10\kappa^{-1}P(E_0)\max\{\|\nabla'\beta\|_{L^{\infty}(\partial\Omega)},d\}(r_t+1).\label{eq:term3-in-EL}
\end{align}
The last term satisfies, by the divergence theorem and the admissibility of $\Psi$, that
\begin{align}\label{eq:term4-in-EL}
\left|\lambda^h \int_{\partial^{\ast}E^h_t\cap\Omega}  \Psi\cdot\nu_{E^h_t}\,d\cH^d\right|=\left|\lambda^h\int_{E^h_t}\mathrm{div}(\Psi)\,dx\right|=|\lambda^h||E^h_t|(d+1)\sgeq\frac{m_0(d+1)}{2}|\lambda^h|.
\end{align}
Combining \eqref{eq:term1-in-EL}-\eqref{eq:term4-in-EL} with Proposition \ref{prop:EL-equation}, there is a constant $C=C(d,\kappa,P(E_0),\|\nabla'\beta\|_{L^{\infty}(\partial\Omega)})>0$ such that
\begin{align}\label{eq:r-bounds-lambda-prestep}
\frac{m_0(d+1)}{2}|\lambda^h|\sleq C(1+r_t+r_t\|v^h\|_{L^2(\partial^{\ast}E^h_t\cap\Omega,d\cH^d)}).
\end{align}
Performing integration over $[0,T]$, applying the Cauchy-Schwarz inequality and Proposition \ref{prop:L2-discrete-velocities}, we obtain that there exists a constant $C=C(d,\kappa,m_0,P(E_0),\|\nabla'\beta\|_{L^{\infty}(\partial\Omega)},T)>0$ such that
\begin{align}\label{eq:r-bounds-lambda}
\int_0^t|\lambda^h(\tau)|\,d\tau\sleq C\left(t+\left(\int_0^tr_{\tau}^2\,d\tau\right)^{1/2}\right)\qquad\text{for }t\in[0,T].
\end{align}
From \eqref{eq:lambda-bounds-r} and \eqref{eq:r-bounds-lambda},
\[
r_t\sleq r_0+C\left(t+\left(\int_0^tr_{\tau}^2\,d\tau\right)^{1/2}\right)\qquad\text{for }t\in[0,T].
\]
A standard ODE argument yields (i). Claim (ii) follows from (i) by taking square and integrating \eqref{eq:r-bounds-lambda-prestep} over $[0,T]$.
\end{proof}

\subsection{Proof of Proposition \ref{prop:spherical-caps}}\label{subsec:proof-prop10}


Throughout this section, $e_1$ denotes the vector $(1,0,\cdots,0)\in\R^{d+1}$, and $BV_{loc}(\Omega,\{0,1\})$ denotes the set of sets of locally finite perimeter, that is,
\[
BV_{loc}(\Omega,\{0,1\}):=\left\{E\subset\Omega\,:\,E\cap K\in BV(\Omega,\{0,1\})\quad \text{for every compact set }K\right\}.
\]

For $E\in BV_{loc}(\Omega,\{0,1\})$, an open subset $W$ of $\Omega$, and an elliptic integrand $\Phi\,:\,\overline{\Omega}\times\R^{d+1}\to\R_{\sgeq0}$ (see \cite[Definition 1.1]{DePM15}), we define
\[
\Phi(E,W):=\int_{\partial^{\ast}E\cap W}\Phi(x,\nu_E(x))\,d\cH^d(x)\in[0,\infty].
\]
When $W=\Omega$, we simply write $\Phi(E,\Omega)=\Phi(E)$.

\begin{defn}[\!\!\protect{\cite[Definition 2.3]{DePM15}, sub/super/minimizer}]
Let $E\in BV_{loc}(\Omega,\{0,1\})$, and let $\Phi\,:\,\overline{\Omega}\times\R^{d+1}\to\R_{\sgeq0}$ be an elliptic integrand.
\begin{itemize}
    \item[(i)] The set $E$ is called a subminimizer of $\Phi$ in $\Omega$ if
    \[
    \Phi(E,W)\sleq\Phi(F,W)
    \]
    whenever $F\subset E$ and $E\setminus F\subset\subset W$ for some open and bounded set $W$.
    \item[(ii)] The set $E$ is called a superminimizer of $\Phi$ in $\Omega$ if
    \[
    \Phi(E,W)\sleq\Phi(F,W)
    \]
    whenever $E\subset F\subset \Omega$ and $F\setminus E\subset\subset W$ for some open and bounded set $W$.
    \item[(iii)] The set $E$ is called a minimizer of $\Phi$ in $\Omega$ if
    \[
    \Phi(E,W)\sleq\Phi(F,W)
    \]
    whenever $F\subset \Omega$ and $E\Delta F\subset\subset W$ for some open and bounded set $W$.
\end{itemize}
\end{defn}

\begin{remark}\label{rmk:minimizers}
The following collect preliminary facts about minimizers from \cite{DePM15}.
\begin{itemize}
\item[(i)] It is true that $E$ is a minimizer if and only if $E$ is both a subminimizer and a superminimizer.
\item[(ii)] If $E$ is a minimizer of a given elliptic integrand $\Phi=\Phi(\nu)$ (see \cite[Definition 1.1]{DePM15}) that is homogeneous over the spatial variable, then the rescaled set $\frac{1}{r}(E-x_0):=\{\frac{1}{r}(x-x_0)\,:\,x\in E\}$ is also a minimizer of $\Phi$.
\item[(iii)] If $\{E_k\}_{k\in\N}$ is a sequence of minimizers of an elliptic integrand $\Phi$, there exist a subsequence $\{E_{k_j}\}_{j\in\N}$ and a minimizer $E$ of $\Phi$ such that
\[
E_{k_j}\to E\qquad\text{in }L^1_{loc}(\R^{d+1})\text{ as }j\to\infty.
\]
\item[(iv)] Lower density estimates on contact set hold \cite[Lemma 2.15, Lemma 2.16]{DePM15}: Let $\Phi$ be an elliptic integrand that is homogeneous over the spatial variable and satisfies $\lambda^{-1}\sleq\Phi\sleq\lambda$ for $\lambda\sgeq1$ for unit vectors. For a minimizer $E$ of $\Phi$ and $x\in\partial E\cap\partial\Omega$, there exists $r_0=r_0(d,\lambda),\,c=c(d,\lambda)>0$ such that
\[
\cH^d(B_r(x)\cap \partial E\cap\partial\Omega)\sgeq cr^d\qquad\text{for }r\in(0,r_0).
\]
\end{itemize}
\end{remark}

Throughout this section, we set and fix the elliptic integrand
\[
\Phi(x,\nu):=|\nu|+\beta(x')\nu\cdot e_{d+1}\qquad\text{for }(x,\nu)\in\overline{\Omega}\times\R^{d+1}.
\]

\begin{lemma}
For $E\in BV_{loc}(\Omega,\{0,1\})$, it holds that
\[
C_{\beta}(E)=\Phi(E).
\]
\end{lemma}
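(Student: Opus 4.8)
The plan is to unfold both sides of the claimed identity $C_{\beta}(E) = \Phi(E)$ directly from their definitions and show they agree term by term, the only subtlety being the contribution of the boundary $\partial\Omega$. Recall from \eqref{eq:energies} that $C_{\beta}(E) = P(E,\Omega) + \int_{\partial\Omega}\beta\,\chi_E\,d\mathcal{H}^d$, whereas by definition $\Phi(E) = \int_{\partial^{\ast}E}\Phi(x,\nu_E(x))\,d\mathcal{H}^d = \int_{\partial^{\ast}E}\big(|\nu_E(x)| + \beta(x')\,\nu_E(x)\cdot e_{d+1}\big)\,d\mathcal{H}^d$. Since $\nu_E$ is a unit vector $\mathcal{H}^d$-a.e. on $\partial^{\ast}E$, the first summand integrates to $\mathcal{H}^d(\partial^{\ast}E) = P(E,\overline{\Omega})$, which by the decomposition $P(E,\overline{\Omega}) = P(E,\Omega) + \int_{\partial\Omega}\chi_E\,d\mathcal{H}^d$ recorded just before the definition of the generalized mean curvature already accounts for $P(E,\Omega)$ plus a spurious full-weight boundary term $\int_{\partial\Omega}\chi_E\,d\mathcal{H}^d$.

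So the crux is to identify the second summand $\int_{\partial^{\ast}E}\beta(x')\,\nu_E\cdot e_{d+1}\,d\mathcal{H}^d$ and see that it cancels that spurious term and leaves exactly $\int_{\partial\Omega}\beta\,\chi_E\,d\mathcal{H}^d$. The essential boundary $\partial^{\ast}E$ splits (up to $\mathcal{H}^d$-null sets) into its interior part $\partial^{\ast}E\cap\Omega$ and its part on $\partial\Omega$. On $\partial^{\ast}E\cap\partial\Omega$ — which by the trace theory for $BV$ functions coincides $\mathcal{H}^d$-a.e. with $\mathrm{Tr}(E)$, i.e. with $\{\chi_E = 1\}$ on $\partial\Omega$ — the outer unit normal of $E$ relative to the ambient $\mathbb{R}^{d+1}$ is $\nu_E = -e_{d+1}$, so $\nu_E\cdot e_{d+1} = -1$ there. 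Hence $\int_{\partial^{\ast}E\cap\partial\Omega}\beta(x')\,\nu_E\cdot e_{d+1}\,d\mathcal{H}^d = -\int_{\partial\Omega}\beta\,\chi_E\,d\mathcal{H}^d$. On the interior piece $\partial^{\ast}E\cap\Omega$, the term $\int_{\partial^{\ast}E\cap\Omega}\beta(x')\,\nu_E\cdot e_{d+1}\,d\mathcal{H}^d$ is precisely what one would need to be zero for the identity to close — and indeed the right way to see this is not to argue it vanishes (it need not), but rather to be careful about which measure $\Phi(E)$ is integrated against. I would therefore revisit the convention: $\Phi(E) = \Phi(E,\Omega)$, i.e. the integral is over $\partial^{\ast}E\cap\Omega$ only (this is exactly the notational convention "when $W = \Omega$ we write $\Phi(E,\Omega) = \Phi(E)$" set just above the lemma), but one must then also recall that $\Phi$ is an elliptic integrand on $\overline{\Omega}$ and the relevant perimeter is the one relative to $\overline{\Omega}$; the cleanest route is to write $\Phi(E) = \int_{\partial^{\ast}E\cap\overline{\Omega}}\Phi(x,\nu_E)\,d\mathcal{H}^d$ using $P(E) = P(E,\overline{\Omega})$, and then the term-by-term cancellation above is exact.

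Concretely, the steps I would carry out are: (1) write $\Phi(E) = \mathcal{H}^d(\partial^{\ast}E) + \int_{\partial^{\ast}E}\beta(x')\,\nu_E\cdot e_{d+1}\,d\mathcal{H}^d$ using $|\nu_E| = 1$ a.e.; (2) invoke $\mathcal{H}^d(\partial^{\ast}E) = P(E) = P(E,\overline{\Omega}) = P(E,\Omega) + \int_{\partial\Omega}\chi_E\,d\mathcal{H}^d$; (3) decompose $\partial^{\ast}E = (\partial^{\ast}E\cap\Omega)\sqcup(\partial^{\ast}E\cap\partial\Omega)$ up to $\mathcal{H}^d$-null sets, note that $\beta$ has no $e_{d+1}$-dependence and that the integrand $\beta(x')\nu_E\cdot e_{d+1}$ is supported, after using $\nu_E = -e_{d+1}$ on the flat piece $\partial^{\ast}E\cap\partial\Omega$ and the fact that the contribution from $\partial^{\ast}E\cap\Omega$ cancels against the reduced-boundary bookkeeping in the chosen relative-perimeter convention, on the set where $\chi_E = 1$ on $\partial\Omega$; (4) combine to get $\Phi(E) = P(E,\Omega) + \int_{\partial\Omega}\beta\,\chi_E\,d\mathcal{H}^d = C_{\beta}(E)$. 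The main obstacle — really the only one — is stating cleanly and correctly which $\mathcal{H}^d$-measure and which reduced boundary the symbol $\Phi(E)$ refers to, since a sloppy reading makes it look like one needs $\int_{\partial^{\ast}E\cap\Omega}\beta\,\nu_E\cdot e_{d+1} = 0$, which is false in general; the resolution is purely a matter of pinning down the convention inherited from \cite{DePM15} (the elliptic integrand lives on $\overline{\Omega}$, so the natural perimeter is the one relative to $\overline{\Omega}$, which is $P(E)$), after which the computation is a one-liner. No deep input is needed beyond the $BV$ trace facts already cited in the paper and the observation that the outer normal on the flat portion of $\partial\Omega$ is $-e_{d+1}$.
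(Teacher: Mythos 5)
There is a genuine gap. The definition recorded just above the lemma is $\Phi(E) = \Phi(E,\Omega) = \int_{\partial^{\ast}E\cap\Omega}\Phi(x,\nu_E)\,d\mathcal{H}^d$, an integral only over the \emph{interior} portion of the reduced boundary; your proposal instead reinterprets it as $\int_{\partial^{\ast}E\cap\overline{\Omega}}$ so as to pick up the flat piece $\partial^{\ast}E\cap\partial\Omega$, and this reinterpretation is what drives your cancellation. With the paper's actual convention, one immediately has $\Phi(E) = P(E,\Omega) + \int_{\partial^{\ast}E\cap\Omega}\beta(x')\,\nu_E\cdot e_{d+1}\,d\mathcal{H}^d$, so what must be shown is precisely $\int_{\partial^{\ast}E\cap\Omega}\beta(x')\,\nu_E\cdot e_{d+1}\,d\mathcal{H}^d = \int_{\partial\Omega}\beta\,\chi_E\,d\mathcal{H}^d$. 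You correctly sense that this interior integral does not vanish, but you never identify what it equals; instead you assert it ``cancels against the reduced-boundary bookkeeping in the chosen relative-perimeter convention,'' which is not an argument. (One can also check that your reinterpreted $\Phi(E) = \int_{\partial^{\ast}E\cap\overline{\Omega}}$ does \emph{not} produce $C_\beta(E)$: carrying out your steps (1)--(4) and keeping all terms leaves a residual $\int_{\partial\Omega}(1-\beta)\chi_E\,d\mathcal{H}^d$ that is not zero since $\|\beta\|_\infty \leq 1-2\kappa < 1$.)

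The missing ingredient is the divergence theorem applied to the divergence-free vector field $X(x) := \beta(x')\,e_{d+1}$ on $\Omega$: since $\operatorname{div}X = \partial_{d+1}\bigl(\beta(x')\bigr) = 0$, the Gauss--Green formula for the set of finite perimeter $E$ gives $0 = \int_{\partial^{\ast}E\cap\Omega}\beta\,e_{d+1}\cdot\nu_E\,d\mathcal{H}^d + \int_{\mathrm{Tr}(E)}\beta\,e_{d+1}\cdot(-e_{d+1})\,d\mathcal{H}^d$, i.e.\ $\int_{\partial^{\ast}E\cap\Omega}\beta\,\nu_E\cdot e_{d+1}\,d\mathcal{H}^d = \int_{\mathrm{Tr}(E)}\beta\,d\mathcal{H}^d = \int_{\partial\Omega}\beta\,\chi_E\,d\mathcal{H}^d$. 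This single line converts the boundary term of $C_\beta$ into the interior term appearing in $\Phi(E)$ and closes the proof. You were one divergence-theorem observation away from the correct argument but went down a different path by altering the domain of integration in the definition of $\Phi(E)$.
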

\begin{proof}
It suffices to show the lemma for a bounded set $E\in BV(\Omega,\{0,1\})$. We, first of all, have
\begin{align*}
C_{\beta}(E)&=\int_{\partial^{\ast}E\cap\Omega}\,d\cH^d+\int_{\mathrm{Tr(E)}}\beta\,d\cH^d\\
&=\int_{\partial^{\ast}E\cap\Omega}|\nu_E|\,d\cH^d+\int_{\mathrm{Tr(E)}}\beta(-e_{d+1})\cdot(-e_{d+1})\,d\cH^d
\end{align*}
By the divergence theorem,
\begin{align*}
\int_{\mathrm{Tr(E)}}\beta(-e_{d+1})\cdot(-e_{d+1})\,d\cH^d=-\int_{\partial^{\ast}E\cap\Omega}\beta(-e_{d+1})\cdot\nu_E\,d\cH^d=\int_{\partial^{\ast}E\cap\Omega}\beta\nu_E\cdot e_{d+1}\,d\cH^d,
\end{align*}
since the vector field $\beta(x')e_{d+1}$ is divergence-free. Therefore,
\begin{align*}
C_{\beta}(E)&=\int_{\partial^{\ast}E\cap\Omega}|\nu_E|\,d\cH^d+\int_{\partial^{\ast}E\cap\Omega}\beta\nu_E\cdot e_{d+1}\,d\cH^d=\Phi(E).
\end{align*}
\end{proof}

The following anisotropic Young's law is established in \cite{DePM15}.

\begin{proposition}[\!\!\protect{\cite[Proposition 2.6]{DePM15}}]\label{prop:anisotropic-young's-law}
Let $F=\{x\in\Omega\,:\,x\cdot\nu<c\}$, $\nu\in\bS^d\setminus\{\pm e_{d+1}\}$, $c\in\R$. Let $x_0\in\partial\Omega$ and $\Phi_{x_0}=\Phi(x_0,\cdot)$. Then,
\begin{itemize}
    \item[(i)] $F$ is a subminimizer of $\Phi_{x_0}$ if and only if $\nabla\Phi_{x_0}(\nu)\cdot e_{d+1}\sleq0$
    \item[(ii)] $F$ is a superminimizer of $\Phi_{x_0}$ if and only if $\nabla\Phi_{x_0}(\nu)\cdot e_{d+1}\sgeq0$.
\end{itemize}
\end{proposition}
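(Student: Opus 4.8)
The plan is to prove (i) directly by an anisotropic calibration argument and then deduce (ii) by complementation. Throughout set $b:=\beta(x_0')$, so that $\Phi_{x_0}(\mu)=|\mu|+b\,\mu\cdot e_{d+1}$ is a smooth, positively $1$-homogeneous integrand, uniformly convex away from the origin (uniform convexity uses $|b|\sleq 1-2\kappa<1$), and write $N:=\nabla\Phi_{x_0}(\nu)$. Then Euler's identity gives $N\cdot\nu=\Phi_{x_0}(\nu)$, convexity and $1$-homogeneity give $N\cdot\mu\sleq\Phi_{x_0}(\mu)$ for all $\mu$, and a direct computation (using $|\nu|=1$) gives $N=\nu+b\,e_{d+1}$, hence $N\cdot e_{d+1}=\nu\cdot e_{d+1}+b=\nabla\Phi_{x_0}(\nu)\cdot e_{d+1}$. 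Write $\Gamma:=\partial^{\ast}F\cap\Omega$, a piece of the hyperplane $\{x\cdot\nu=c\}$ with constant normal $\nu$, and $S:=\mathrm{Tr}(F)=\{x'\in\partial\Omega:x'\cdot\nu'<c\}$, a genuine half-space of $\R^{d}$ since $\nu\neq\pm e_{d+1}$ forces $\nu'\neq0$.

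The heart of the matter is the one-sided energy estimate
\[
\Phi_{x_0}(F,W)-\Phi_{x_0}(G,W)\ \sleq\ \big(\nabla\Phi_{x_0}(\nu)\cdot e_{d+1}\big)\,\cH^{d}\!\big(S\setminus\mathrm{Tr}(G)\big)
\]
for every bounded open $W$ and every subcompetitor $G\subset F$ with $F\setminus G\subset\subset W$. I would prove it as follows. The set $D:=F\setminus G$ is bounded and of finite perimeter, $G=F$ off $\overline W$, and by nestedness of half-spaces $\cH^{d}$-a.e.\ point of $\partial^{\ast}G\cap\Omega$ is either a ``new'' point in $F^{(1)}$ (the density-one set of $F$) or lies on $\Gamma$ with $\nu_{G}=\nu$, while $\cH^{d}$-a.e.\ point of $\Gamma\cap W$ either lies on $\partial^{\ast}G$ (again with $\nu_{G}=\nu$) or lies in $G^{(0)}\subset W$. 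Cancelling coinciding contributions reduces the left-hand side to $\Phi_{x_0}(\nu)\,\cH^{d}(\Gamma\cap G^{(0)})-\int_{\partial^{\ast}G\cap\Omega\cap F^{(1)}}\Phi_{x_0}(\nu_{G})\,d\cH^{d}$. On the other hand, applying the Gauss--Green theorem on $D$ to the constant (hence divergence-free) field $N$, localized by a cutoff equal to $1$ near $\overline D$, and using that the reduced boundary of $D$ in $\R^{d+1}$ consists $\cH^{d}$-a.e.\ of $\Gamma\cap G^{(0)}$ (normal $\nu$), of $\partial^{\ast}G\cap\Omega\cap F^{(1)}$ (normal $-\nu_{G}$), and of the trace $\mathrm{Tr}(D)=S\setminus\mathrm{Tr}(G)$ on $\partial\Omega$ (normal $-e_{d+1}$), yields $\int_{\partial^{\ast}G\cap\Omega\cap F^{(1)}}N\cdot\nu_{G}\,d\cH^{d}=\Phi_{x_0}(\nu)\,\cH^{d}(\Gamma\cap G^{(0)})-\big(\nabla\Phi_{x_0}(\nu)\cdot e_{d+1}\big)\,\cH^{d}(S\setminus\mathrm{Tr}(G))$. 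Subtracting and bounding $N\cdot\nu_{G}\sleq\Phi_{x_0}(\nu_{G})$ gives the estimate.

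The forward implication of (i) is then immediate: if $\nabla\Phi_{x_0}(\nu)\cdot e_{d+1}\sleq0$ the right-hand side is $\sleq0$, so $F$ is a subminimizer. For the converse I would argue by contraposition: if $\nabla\Phi_{x_0}(\nu)\cdot e_{d+1}>0$, choose coordinates with $\nu'=|\nu'|e_{1}$ and $c=0$, fix $\varphi\in C^{\infty}_{c}(\R^{d+1})$ with $\varphi\sgeq0$ and $\varphi>0$ at the origin, and for small $\delta>0$ take the dented competitor $G_{\delta}:=\{x\in F:x\cdot\nu<-\delta\varphi(x)\}\subset F$, for which $F\setminus G_{\delta}$ is bounded. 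Then $\cH^{d}(S\setminus\mathrm{Tr}(G_{\delta}))=c_{0}\delta+O(\delta^{2})$ with $c_{0}>0$ depending on $\varphi$ and $|\nu'|$, the new boundary $\{x\cdot\nu=-\delta\varphi(x)\}$ has unit normal $\nu+O(\delta)$, and the same Gauss--Green identity together with the quadratic Taylor remainder $\Phi_{x_0}(\nu_{G_{\delta}})=N\cdot\nu_{G_{\delta}}+O(\delta^{2})$ (available by uniform ellipticity) gives $\Phi_{x_0}(F,W)-\Phi_{x_0}(G_{\delta},W)=\big(\nabla\Phi_{x_0}(\nu)\cdot e_{d+1}\big)c_{0}\delta-O(\delta^{2})>0$ for $\delta$ small, so $F$ is not a subminimizer. (Equivalently, one may compute the first variation of $\Phi_{x_0}(\cdot,\Omega)$ at the flat set $F$ and read off its contact-line term --- the classical route to Young's law.) Finally, (ii) follows from (i) applied to $F^{\ast}:=\Omega\setminus F$, a half-space with normal $-\nu$: since $\partial^{\ast}F^{\ast}\cap\Omega=\partial^{\ast}F\cap\Omega$ with opposite normals and $G\setminus F=F^{\ast}\setminus(\Omega\setminus G)$, $F$ is a superminimizer of $\Phi_{x_0}$ iff $F^{\ast}$ is a subminimizer of the elliptic integrand $\Phi_{x_0}^{-}(\mu):=\Phi_{x_0}(-\mu)$; and since $\nabla\Phi_{x_0}^{-}(-\nu)=-N$, the condition $\nabla\Phi_{x_0}^{-}(-\nu)\cdot e_{d+1}\sleq0$ is exactly $\nabla\Phi_{x_0}(\nu)\cdot e_{d+1}\sgeq0$.

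The step I expect to be the real obstacle is the converse direction, specifically the claim that the excess of the dented competitor's surface energy over its calibrated value $\int N\cdot\nu_{G_{\delta}}$ is only of order $\delta^{2}$: this is precisely where the strict convexity (ellipticity) of $\Phi_{x_0}$ on $\bS^{d}$ must be used quantitatively, so that the linear gain $\big(\nabla\Phi_{x_0}(\nu)\cdot e_{d+1}\big)c_{0}\delta$ from the receding trace is not absorbed. The remaining ingredients --- the measure-theoretic description of $\partial^{\ast}(F\setminus G)$, the cutoff in Gauss--Green, the computation of $c_{0}$, and the trace bookkeeping --- are routine.
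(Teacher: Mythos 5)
Your proposal is correct, but it is worth noting that the paper itself offers no proof of this statement: it is imported verbatim from \cite[Proposition 2.6]{DePM15}, where it is proved for general elliptic integrands. What you have written is a legitimate, self-contained verification for the specific integrand $\Phi_{x_0}(\mu)=|\mu|+\beta(x_0')\,\mu\cdot e_{d+1}$: the calibration $N=\nu+\beta(x_0')e_{d+1}$, the Gauss--Green identity on $D=F\setminus G$ (whose reduced boundary in $\R^{d+1}$ decomposes, as you say, into $\Gamma\cap G^{(0)}$, $\partial^{\ast}G\cap\Omega\cap F^{(1)}$ and the trace piece $S\setminus\mathrm{Tr}(G)$ with normal $-e_{d+1}$), and the denting competitors for necessity all go through, and the duality $\Phi_{x_0}^-(\mu):=\Phi_{x_0}(-\mu)$, $F^{\ast}=\Omega\setminus F$ correctly reduces (ii) to (i) since $\nabla\Phi_{x_0}^-(-\nu)=-N$. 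Two remarks. First, the step you single out as the ``real obstacle'' is in fact trivial for this integrand: since the anisotropic part is linear in $\mu$, it cancels exactly in the excess, giving $\Phi_{x_0}(\mu)-N\cdot\mu=1-\nu\cdot\mu=\tfrac12|\mu-\nu|^{2}$ for unit $\mu$; no quantitative ellipticity or Taylor-remainder estimate is needed, and indeed no convexity beyond Cauchy--Schwarz enters anywhere (the sharp inequality $N\cdot\mu\sleq\Phi_{x_0}(\mu)$ is just $\nu\cdot\mu\sleq|\mu|$). Relatedly, the hypothesis $|\beta(x_0')|\sleq 1-2\kappa<1$ is used for nonnegativity/ellipticity of the integrand (so that it fits the framework of \cite{DePM15} and so that $\Phi_{x_0}^-$ is again admissible), not for ``uniform convexity'' of $\Phi_{x_0}$, which as a $1$-homogeneous function is never strictly convex radially. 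Second, your bookkeeping implicitly uses that competitors $G\subset\Omega$ of locally finite perimeter in $\Omega$ have locally finite perimeter in $\R^{d+1}$ with $\cH^{d}$-a.e.\ point of $\partial\Omega$ lying in $G^{(0)}\cup\mathrm{Tr}(G)$; this is standard for the half-space $\Omega$ but deserves a sentence. Compared with \cite{DePM15}, your argument trades generality (arbitrary elliptic integrands there) for elementarity here, which is perfectly adequate for the way the proposition is used in this paper.
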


\begin{figure}[htbp]
	\begin{center}
            \includegraphics[height=5.25cm]{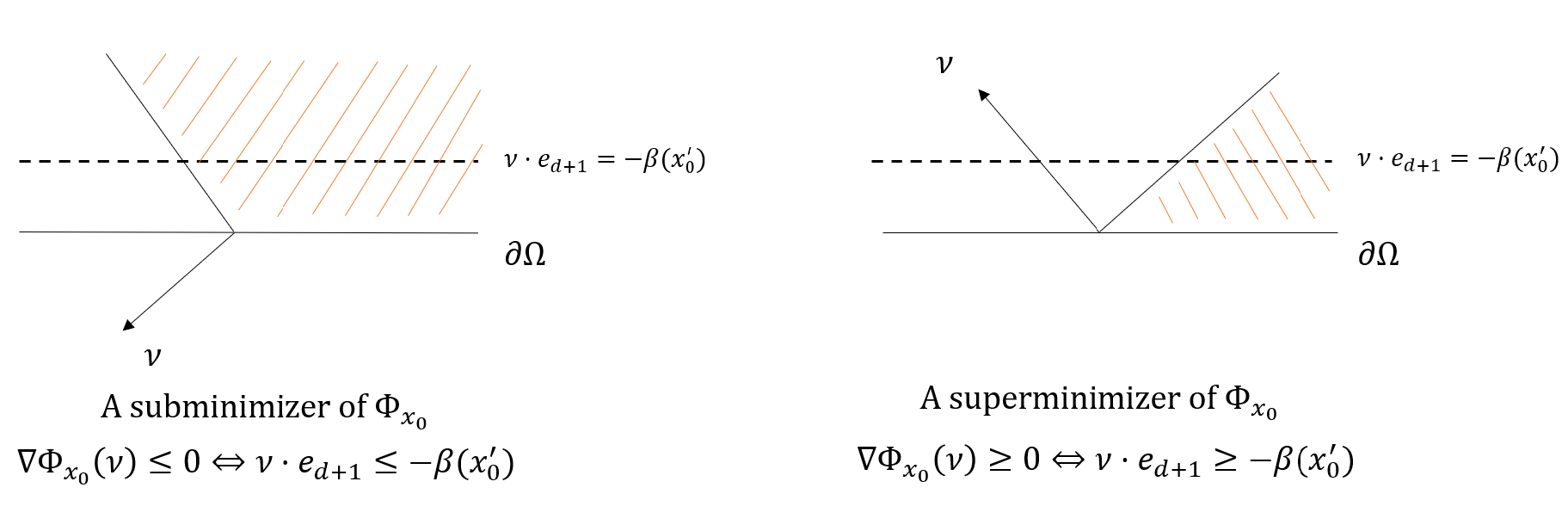}
		\vskip 0pt
		\caption{A subminimizer and a superminimizer.}
        \label{fig:sub/superminimizer}
	\end{center}
\end{figure}

For $E\in BV_{loc}(\Omega,\{0,1\})$ and $x_0\in\overline{\Omega}$, we define the set $\cB_{x_0}(E)$ of blowups of $E$ at $x_0$ as
\[
\mathcal{B}_{x_0}(E) = \left\{ F \subset \R^{d+1}\, :\, \text{there exists }r\to0\text{ such that }\frac{1}{r}(E-x_0)\to F\text{ in }L^1_{loc}(\R^{d+1})\text{ as }r\to0 \right\}.
\]
Here, $\frac{1}{r}(E-x_0)$ denotes the set $\{\frac{1}{r}(x-x_0)\,:\,x\in E\}$. We note, by a diagonal argument, that
\begin{equation*}
\begin{cases}
\text{$\mathcal{B}_{x_0}(E)$ is a compact subset of $L^1_{loc}(\R^{d+1})$},\qquad\text{and}\\
\cB_0(F)\subset\cB_{x_0}(E)\qquad\text{for }F\in\cB_{x_0}(E).
\end{cases}
\end{equation*}

\begin{lemma}[\!\!\protect{\cite[Lemma 5.2]{DePM15}}]
Let $E\in BV_{loc}(\Omega,\{0,1\})$ be a minimizer of $\Phi$, and let $x_0\in\partial_{\partial\Omega}\left(\partial E\cap\partial\Omega\right)$. Then,
\begin{itemize}
    \item[(i)] $\cB_{x_0}(E)\neq\emptyset$, and $\emptyset,\Omega\notin\cB_{x_0}(E)$.
    \item[(ii)] Every element of $\cB_{x_0}(E)$ is a minimizer of $\Phi_{x_0}$.
\end{itemize}
\end{lemma}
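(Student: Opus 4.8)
The plan is to carry out the standard blowup argument for anisotropic minimizers, being careful that the integrand $\Phi$ depends on the spatial variable and that the ambient domain $\Omega$ is a half-space with $x_0$ on its boundary. For $r>0$ set $E_r:=\frac1r(E-x_0)$. Since $\Omega$ is a half-space and $x_0\in\partial\Omega$ one has $\frac1r(\Omega-x_0)=\Omega$, so each $E_r$ lies in $\Omega$; and since $E$ minimizes $\Phi$ in $\Omega$, the set $E_r$ minimizes in $\Omega$ the integrand
\[
\Phi_r(y,\nu):=\Phi(x_0+ry,\nu)=|\nu|+\beta(x_0'+ry')\,\nu\cdot e_{d+1}.
\]
Because $\|\beta\|_{L^\infty(\partial\Omega)}\sleq1-2\kappa$ we have $\kappa|\nu|\sleq\Phi_r(y,\nu)\sleq(2-2\kappa)|\nu|$ uniformly in $r$, and since $\beta$ is Lipschitz, $|\Phi_r(y,\nu)-\Phi_{x_0}(\nu)|\sleq\|\nabla'\beta\|_{L^\infty(\partial\Omega)}\,r\,|y'|\,|\nu|$, so $\Phi_r\to\Phi_{x_0}$ locally uniformly on $\overline\Omega\times\R^{d+1}$ as $r\to0$.

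For the nonemptiness in (i), comparing $E_r$ with $E_r\setminus B_R$ and with $E_r\cup B_R$ and using the coercivity $\kappa|\nu|\sleq\Phi_r$ (exactly as in the density estimates) gives a bound $P(E_r,B_R)\sleq C(d,\kappa)R^d$ uniform in $r$; by $BV$-compactness a sequence $r_j\to0$ yields $E_{r_j}\to F$ in $L^1_{loc}(\R^{d+1})$, hence $\cB_{x_0}(E)\neq\emptyset$. For (ii), I would repeat the proof of the closure property in Remark \ref{rmk:minimizers}(iii) with the varying integrands: lower semicontinuity of $\Phi_{x_0}(\cdot,W)$ under $L^1_{loc}$-convergence (Reshetnyak), together with $\Phi_{r_j}(E_{r_j},W)\sgeq\Phi_{x_0}(E_{r_j},W)-C r_j\,P(E_{r_j},W)$ and the uniform perimeter bound, gives $\Phi_{x_0}(F,W)\sleq\liminf_j\Phi_{r_j}(E_{r_j},W)$ for bounded open $W$. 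For the opposite inequality against any competitor $G$ with $F\Delta G\subset\subset W$, I would glue $G$ on $W$ to $E_{r_j}$ outside a slightly larger $W'$, choosing via Fubini a good slice in the collar $W'\setminus W$ so that the $\cH^d$-measure of the new interface (and, when $W$ meets $\partial\Omega$, the trace discrepancy on $\partial\Omega$) is controlled by $\|E_{r_j}-F\|_{L^1(W')}\to0$; this produces admissible competitors $G_j$ for $E_{r_j}$ with $\Phi_{r_j}(G_j,W')\to\Phi_{x_0}(G,W')$, and minimality of $E_{r_j}$ then forces $\Phi_{x_0}(F,W)\sleq\Phi_{x_0}(G,W)$. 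Hence every $F\in\cB_{x_0}(E)$ minimizes $\Phi_{x_0}$.

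It remains to exclude $\emptyset$ and $\Omega$, which is where the hypothesis $x_0\in\partial_{\partial\Omega}(\partial E\cap\partial\Omega)$ is used: by the boundary structure theorem for $\Phi$-minimizers of \cite{DePM15} (the analogue of Theorem \ref{thm:DeMP}(iii)), it gives $x_0\in M\cap\partial\Omega$ with $M=\overline{\partial E\cap\Omega}$, so there are $z_k\in\partial E\cap\Omega$ with $z_k\to x_0$; then the density estimates for $\Phi$-minimizers at the $z_k$ (\cite[Lemma 2.15, Lemma 2.16]{DePM15}, Remark \ref{rmk:minimizers}(iv), as in the proof of Corollary \ref{cor:density}) bootstrap to $c\,r^{d+1}\sleq\min\{|E\cap B_r(x_0)|,\ |(\Omega\cap B_r(x_0))\setminus E|\}$ for $r\in(0,r_0)$, with $c=c(d,\kappa)>0$. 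Since any $F\in\cB_{x_0}(E)$ is an $L^1_{loc}$-limit $E_r\to F$ along some $r\to0$, rescaling these estimates at scale $r$ gives $|F\cap B_1|\sgeq c$ and $|(\Omega\cap B_1)\setminus F|\sgeq c$; thus no element of $\cB_{x_0}(E)$ is Lebesgue-equivalent to $\emptyset$ or to $\Omega$, and (i) follows. I expect this last step to be the main obstacle: the two-sided density must be anchored at $x_0$ knowing only that it is a topological boundary point of the contact set — possibly a singular point of the contact line, where no regularity is available — so it genuinely relies on the capillary boundary-regularity theory of \cite{DePM15} rather than on local smoothness; a secondary technical point is the boundary gluing in (ii), where the competitor's trace term $\int_{\partial\Omega}\beta\chi$ must be shown to pass to the limit, handled by the same choice of slice.
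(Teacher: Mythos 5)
The paper itself gives no proof of this lemma: it is quoted verbatim from \cite[Lemma 5.2]{DePM15} and used as a black box in the proof of Proposition \ref{prop:spherical-caps}. Your proposal is therefore a reconstruction of the de Philippis--Maggi argument, and it is essentially the right one. The rescaling step is correct (using $\frac{1}{r}(\Omega-x_0)=\Omega$ and that $E_r$ minimizes the frozen-coefficient integrand $\Phi_r(y,\nu)=\Phi(x_0+ry,\nu)$, with uniform two-sided ellipticity and $\Phi_r\to\Phi_{x_0}$ locally uniformly); coercivity gives the uniform local perimeter bounds needed for $BV$-compactness and a diagonal argument, so $\cB_{x_0}(E)\neq\emptyset$; and for (ii) the combination of Reshetnyak lower semicontinuity with the collar-and-slice gluing of competitors is the standard closure argument, the boundary trace discrepancy being controlled by the same Fubini choice of slice. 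These are precisely the ingredients of the DePM proof.

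The one step that should be stated more carefully is the exclusion of $\emptyset$ and $\Omega$. You cite Remark \ref{rmk:minimizers}(iv), i.e.\ \cite[Lemma 2.15, Lemma 2.16]{DePM15}, but those give lower bounds on $\cH^d(B_r(x)\cap\partial E\cap\partial\Omega)$ — the surface density of the contact set — not on $|E\cap B_r(x_0)|$ and $|(\Omega\cap B_r(x_0))\setminus E|$, which is what you use in the final contradiction. The correct input is the two-sided Lebesgue (volume) density estimate for $(\Lambda,r_0)$-minimizers of $\Phi$ at points of $\partial E$, which in \cite{DePM15} is established directly, also at boundary points; since $\partial E\cap\partial\Omega$ is closed in $\partial\Omega$ one has $x_0\in\partial_{\partial\Omega}(\partial E\cap\partial\Omega)\subset\partial E\cap\partial\Omega$, so this applies directly at $x_0$ with no detour through interior points $z_k$. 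Indeed the ``bootstrap from $z_k$'' should be dropped: interior density estimates at $z_k\in\partial E\cap\Omega$ are only valid at scales comparable to $\operatorname{dist}(z_k,\partial\Omega)\to0$, so they cannot by themselves yield a fixed-scale density bound at $x_0$. With the citation corrected to the boundary volume density estimate and the bootstrap removed, your argument is sound and matches the route behind the cited lemma.
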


Let $E_0\in BV(\Omega,\{0,1\})$ be a bounded set of volume  $m_0$, and let $\{E^h_t\}_{t\sgeq0}$ be an approximate flat flow for \eqref{eq:VPMCF-contact-angle} with initial datum $E_0$. For $t\sgeq 0$, let $r_t:=\inf\{r>0\,:\,E^h_t\subset C_r\}$ as in Proposition \ref{prop:spherical-caps}. To prove Proposition \ref{prop:spherical-caps}, we assume for the contrary that there exist $t\sgeq h$ and $x_0\in\partial E^h_t\cap\partial C\cap\partial\Omega$.

For simplicity, we let $C:=C_{r_t}$, and let $\nu_C$ be the outer unit normal vector $\nu_C$ of $\partial C$ at $x_0\in\partial C\cap\partial\Omega$. By the construction of $C$, we have
\[
\nu_C\cdot e_{d+1}=1-\kappa.
\]
By rotation, we can assume without loss of generality that there exists $\alpha_C\in\left(-\frac{\pi}{2},\frac{\pi}{2}\right)$ such that
\begin{align*}
\nu_C=\cos(\alpha_C)(-e_1)-\sin(\alpha_C)e_{d+1}.
\end{align*}
Let
\[
H_C:=\left\{x\in\Omega\,:\,x\cdot\nu_C\sleq x_0\cdot\nu_C\right\}.
\]
By the choice of $r_t$, we have inclusion relations and
\begin{equation}
\begin{cases}
E^h_t\subset C\subset H_C,\\
\partial E^h_t\cap\partial\Omega\subset\left\{x\in\partial\Omega\,:\,x\cdot(-e_1)\sleq x_0\cdot(-e_1)\right\}.
\end{cases}\label{eq:inclusions}
\end{equation}
\begin{figure}[htbp]
	\begin{center}
            \includegraphics[height=5.5cm]{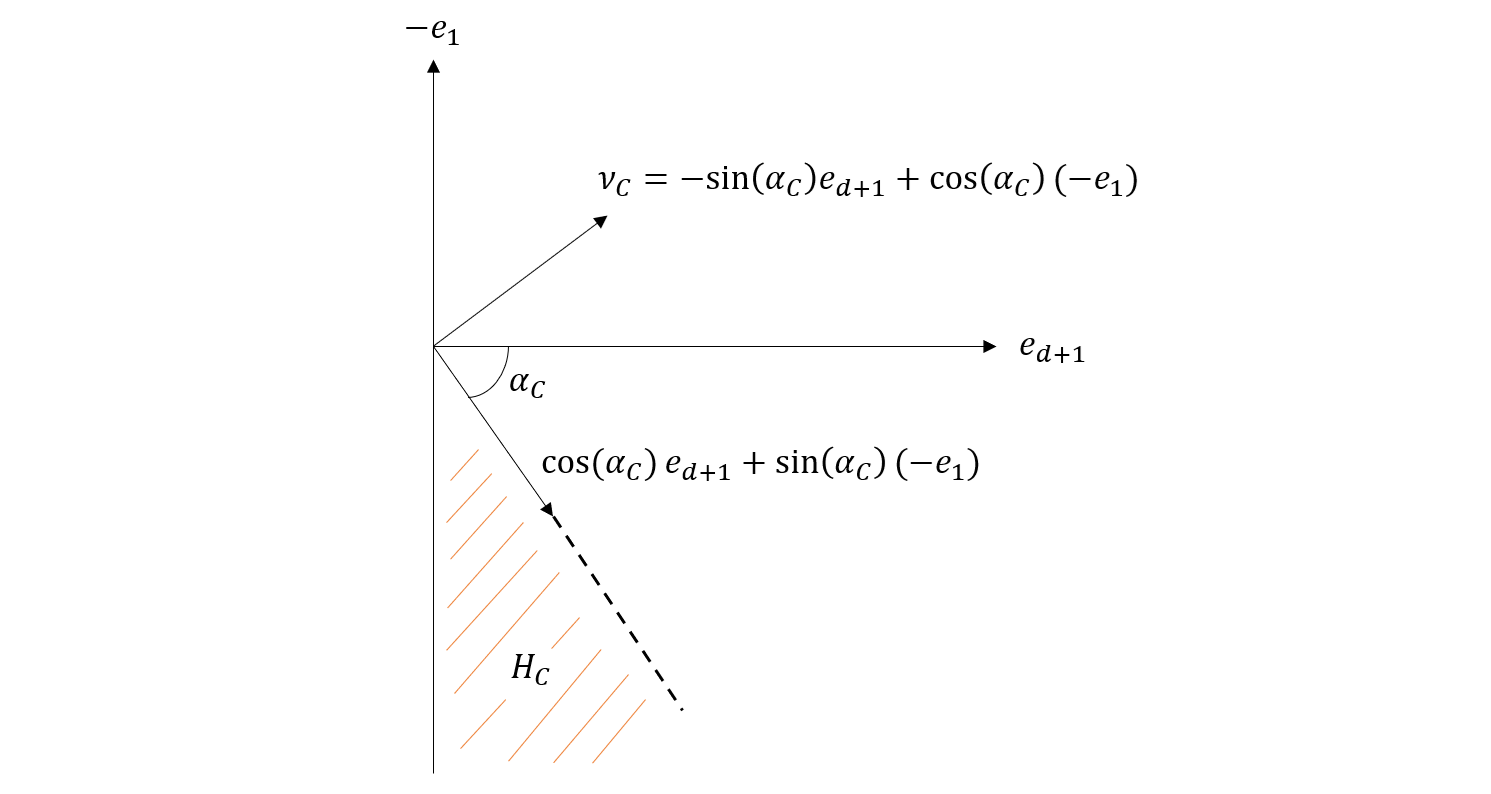}
		\vskip 0pt
		\caption{The normal vector $\nu_C$ and the space $H_C$ (with $x_0=0$).}
        \label{fig:spherical-cap}
	\end{center}
\end{figure}

\begin{lemma}[\!\!\protect{\cite[Lemma 5.3]{DePM15}}]\label{lem:wedging}
Suppose that there exists $x_0\in\partial E^h_t\cap\partial C\cap\partial\Omega$. Then,
\begin{itemize}
    \item[(i)] Every $F\in\cB_{x_0}(E^h_t)$ satisfies $F\subset\left\{x\in\overline{\Omega}\,:\,x\cdot\nu_C\sleq0\right\}$.
    \item[(ii)] Every $F\in\cB_{x_0}(E^h_t)$ satisfies
    \[
    \cH^d\left(\partial F\cap\partial\Omega\setminus\left\{x\in\partial\Omega\,:\,x\cdot(-e_1)\sleq0\right\}\right)=0.
    \]
    \item[(iii)] There exists $L>0$ such that for any $F\in\cB_{x_0}(E^h_t)$,
    \begin{equation*}
    \begin{cases}
    \sup\left\{\frac{x\cdot(-e_1)}{x\cdot e_{d+1}}\,:\,x\in\partial F\cap\Omega\right\}\sleq -\frac{1-\kappa}{\sqrt{2\kappa-\kappa^2}},\qquad\text{and}\\
    \inf\left\{\frac{x\cdot(-e_1)}{x\cdot e_{d+1}}\,:\,x\in\partial F\cap\Omega\right\}\sgeq-L.
    \end{cases}
    \end{equation*}
\end{itemize}
\end{lemma}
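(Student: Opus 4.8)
The plan is to transplant the boundary blowup analysis of \cite[Lemma 5.3]{DePM15} to the approximate flow. Translate so that $x_0=0$; then $\cB_{x_0}(E^h_t)$ is literally the set of $L^1_{loc}$-limits of the dilates $\tfrac1r E^h_t$. Since the perturbation terms in $\mathcal{F}^h$ are of lower order, $E^h_t$ is a $\Lambda$-minimizer of $\Phi=C_\beta$, so by the preceding lemma (note that $E^h_t\subset C$ forces density $0$ at the points of $\partial\Omega$ lying just outside the base of $C$, whence $x_0\in\partial_{\partial\Omega}(\partial E^h_t\cap\partial\Omega)$) every $F\in\cB_{x_0}(E^h_t)$ is a genuine minimizer of the spatially homogeneous integrand $\Phi_{x_0}$, hence simultaneously a sub- and a superminimizer, and it inherits the interior and contact-set density estimates of Remark~\ref{rmk:minimizers} uniformly in $F$ (these estimates being scale invariant). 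The only facts about $E^h_t$ I will use are the two half-space containments in \eqref{eq:inclusions}.

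For (i): after translation $H_C-x_0=\{x:x\cdot\nu_C\sleq0\}$ is a half-space through the origin, hence invariant under $x\mapsto x/r$, and $\Omega-x_0=\Omega$ since $x_0\in\partial\Omega$; so each dilate $\tfrac1r E^h_t$ lies in $\{x\cdot\nu_C\sleq0\}\cap\Omega$ by \eqref{eq:inclusions}, and passing to the $L^1_{loc}$-limit gives $F\subset\{x\cdot\nu_C\sleq0\}\cap\overline\Omega$; in the normalized coordinates fixed before the lemma this reads $\sqrt{2\kappa-\kappa^2}\,x_1\sgeq(1-\kappa)x_{d+1}$ on $F$. For (ii): since $\{x\cdot\nu_C\sleq0\}\cap\partial\Omega=\{x_1\sgeq0\}$, there is no point of $\partial\Omega\cap\{x_1<0\}$ near which $\chi_F$ is nonzero, so $\mathrm{Tr}(F)\cap\{x_1<0\}=\emptyset$; combining this with $\cH^d\big((\partial F\cap\partial\Omega)\,\Delta\,\mathrm{Tr}(F)\big)=0$ (the analogue for $\Phi_{x_0}$-minimizers of Theorem~\ref{thm:DeMP}(ii)) yields $\cH^d\big(\partial F\cap\partial\Omega\cap\{x_1<0\}\big)=0$, which is exactly (ii). The first inequality in (iii) is then immediate: any $x\in\partial F\cap\Omega\subset\overline F\cap\Omega$ has $x_{d+1}>0$ and $\sqrt{2\kappa-\kappa^2}\,x_1\sgeq(1-\kappa)x_{d+1}>0$ by (i), so $\tfrac{x\cdot(-e_1)}{x\cdot e_{d+1}}=-\tfrac{x_1}{x_{d+1}}\sleq-\tfrac{1-\kappa}{\sqrt{2\kappa-\kappa^2}}$.

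The second inequality in (iii) is the technical heart, and here I would argue by contradiction along the comparison scheme of \cite[Lemma 5.3]{DePM15}. If no uniform $L$ existed, one could, by compactness of $\cB_{x_0}(E^h_t)$, find $F\in\cB_{x_0}(E^h_t)$ and points $y_k\in\partial F\cap\Omega$ with $y_{k,1}/y_{k,d+1}\to\infty$; translating along $\partial\Omega$ in the directions $e_2,\dots,e_d$ and rescaling by $y_{k,1}$ — operations preserving the class of $\Phi_{x_0}$-minimizers and the containments in (i)–(ii) — one obtains minimizers $G_k$ having a free boundary point at $(1,0,\dots,0,\delta_k)$ with $\delta_k\to0^+$. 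Passing to the limit (Remark~\ref{rmk:minimizers}(iii)) produces a $\Phi_{x_0}$-minimizer $G$ with $e_1\in\overline{\partial G\cap\Omega}\cap\partial\Omega$, still satisfying (i)–(ii), so $e_1$ lies on the contact line of $G$ (Theorem~\ref{thm:DeMP}(iii)). One then exploits the strict deficiency $|\beta(x_0)|\sleq1-2\kappa<1$: by the anisotropic Young's law (Proposition~\ref{prop:anisotropic-young's-law} and Theorem~\ref{thm:DeMP}(iv)), away from a $\cH^{d-1}$-null singular set the free boundary of $G$ meets $\partial\Omega$ transversally, so $G$ cannot contain — arbitrarily close to $\partial\Omega$ — the thin, nearly-horizontal sliver-like regions near $e_1$ that the points $(1,0,\dots,0,\delta_k)$ entail, since removing such a sliver strictly lowers $\Phi_{x_0}$ (this is where $\beta(x_0)>-1$ enters) and violates subminimality. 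This contradiction, with the singular contact points excluded by a density argument, proves (iii), and compactness over $\cB_{x_0}(E^h_t)$ gives the uniform $L$.

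I expect the lower bound in (iii) to be the main obstacle. Parts (i) and (ii) are soft, resting only on the scale invariance of half-spaces and on Theorem~\ref{thm:DeMP}(ii); in contrast, the lower bound must rule out that a blowup's free boundary becomes asymptotically tangent to $\partial\Omega$, and, unlike the interior situation, there is no comparison principle directly available — one must route the argument through the sub/superminimizer comparison machinery and the boundary regularity theory of \cite{DePM15}, with the strict inequality $\|\beta\|_{L^\infty(\partial\Omega)}\sleq1-2\kappa$ (equivalently, Young's angle bounded away from $0$ and $\pi$) playing the decisive role.
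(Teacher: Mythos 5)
Your handling of parts (i), (ii), and the first half of (iii) matches the paper: these are immediate from the two containments in \eqref{eq:inclusions} together with scale-invariance of half-spaces through the origin and the trace identification of Theorem~\ref{thm:DeMP}(ii). Your construction for the lower bound in (iii) — translate in $e_1^\perp\cap e_{d+1}^\perp$, rescale by the $e_1$-component of the offending points, invoke Remark~\ref{rmk:minimizers}(ii)--(iii) to pass to a limiting $\Phi_{x_0}$-minimizer $G_\infty$ with $e_1\in\partial G_\infty\cap\partial\Omega$ — is also exactly the paper's. The divergence, and the problem, is in how you close the contradiction.

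You route the final step through the anisotropic Young's law and a sliver-removal comparison, saying that removing a thin near-horizontal sliver near $e_1$ strictly lowers $\Phi_{x_0}$ because $\beta(x_0)>-1$. This is both vaguer and stronger than what is needed, and it pulls in an issue you yourself flag: $e_1$ may lie in the singular contact set $\Sigma$, where Theorem~\ref{thm:DeMP}(iv) gives no transversality, and your ``density argument'' to dispose of that case is left unspecified. The paper instead closes the contradiction with the \emph{lower density estimate on the contact set} from \cite{DePM15} (Remark~\ref{rmk:minimizers}(iv)), which is uniform and makes no smoothness assumption at the contact point $p$: the positive $\cH^d$-density of $\partial G_\infty\cap\partial\Omega$ in every small ball $B_r(p)$, combined with the carried-over half-space constraint on the trace from (ii), is incompatible with $p$ being a limit of interior free-boundary points of the rescaled sets. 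No Young's law is used in Lemma~\ref{lem:wedging}; Young's law (Proposition~\ref{prop:anisotropic-young's-law}) enters only afterwards, in the proof of Proposition~\ref{prop:spherical-caps}, where the wedge angles $\alpha_1,\alpha_2$ produced by this lemma are compared against the cap angle $\alpha_C$. Folding Young's law into the lemma, as you propose, both overcomplicates the step and muddles the logical order of the section.

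A second, smaller gap: you cannot use compactness of $\cB_{x_0}(E^h_t)$ to reduce to a single blowup $F$. The ratio functional $\beta_2(F)=\inf_{\partial F\cap\Omega}\tfrac{x\cdot(-e_1)}{x\cdot e_{d+1}}$ is upper semicontinuous in $L^1_{\mathrm{loc}}$ (this is precisely what is later used in the proof of Proposition~\ref{prop:spherical-caps}), so compactness yields a \emph{maximizer}, not a minimizer; it does not let you collapse the hypothesized unboundedness onto one $F$. The paper avoids this by working directly with a sequence $F_k\in\cB_{x_0}(E^h_t)$ and points $x_k\in\partial F_k\cap\Omega$, then diagonalizing through the rescalings $G_k=\tfrac{1}{x_k\cdot e_1}F_k$. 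Your scheme would become correct if you likewise work with a sequence $F_k$ rather than a single $F$, and if you replace the Young's-law/sliver step by the density estimate of Remark~\ref{rmk:minimizers}(iv).
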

\begin{proof}
The first two claims follow from \eqref{eq:inclusions}, and we focus on (iii). We note that (i) implies the first part of (iii) from the fact that $\tan(\alpha_C)=-\frac{1-\kappa}{\sqrt{2\kappa-\kappa^2}}$.

We claim that (ii) implies the second part of (iii), that is, there exists $L>0$ such that
\[
\inf\left\{\frac{x\cdot(-e_1)}{x\cdot e_{d+1}}\,:\,x\in\partial F\cap\Omega\right\}\sgeq -L\qquad\text{for any }F\in\cB_{x_0}(E^h_t).
\]
Assume for the contrary that there exist $F_k\in\cB_{x_0}(E^h_t),\,x_k\in\partial F_k\cap\Omega$ for each $k\in\N$ such that
\begin{equation}
\begin{cases}
\cH^d\left(\partial F_k\cap\partial\Omega\setminus\left\{x\in\partial\Omega\,:\,x\cdot(-e_1)\sleq0\right\}\right)=0,\qquad\text{and}\\
\frac{x_k\cdot(-e_1)}{x_k\cdot e_{d+1}}\to-\infty\qquad\text{as }k\to\infty.
\end{cases}\label{eq:negative-infinity}
\end{equation}
By translations in directions in $e_1^{\perp}\cap e_{d+1}^{\perp}$, we can assume without loss of generality that
\[
x_k=(x_k\cdot e_1,0,\cdots,0,x_k\cdot e_{d+1}).
\]
By Remark \ref{rmk:minimizers}(ii) and using the fact that $x_k\cdot e_1>0$, the set $G_k:=\frac{1}{x_k\cdot e_1}F_k$ is a minimizer of $\Phi_{x_0}$,
\begin{equation*}
\begin{cases}
\cH^d\left(\partial G_k\cap\partial\Omega\setminus\left\{x\in\partial\Omega\,:\,x\cdot(-e_1)\sleq0\right\}\right)=0,\qquad\text{and}\\
\left(1,0,\cdots,0,\frac{x_k\cdot e_{d+1}}{x_k\cdot e_1}\right)\in\partial G_k\cap \Omega.
\end{cases}
\end{equation*}
By Remark \ref{rmk:minimizers}(iii) and \eqref{eq:negative-infinity}, there exists a $L^1_{loc}(\R^{d+1})$-subsequential limit $G_{\infty}$ of $G_k$ such that $G_{\infty}$ is a minimizer of $\Phi_{x_0}$,
\begin{equation*}
\begin{cases}
\cH^d\left(\partial G_{\infty}\cap\partial\Omega\setminus\left\{x\in\partial\Omega\,:\,x\cdot(-e_1)\sleq0\right\}\right)=0,\qquad\text{and}\\
p:=\left(1,0,\cdots,0\right)\in\partial G_{\infty}\cap \partial\Omega.
\end{cases}
\end{equation*}
This leads to a contradiction, as the fact that $p\in\left\{x\in\partial\Omega\,:\,x\cdot(-e_1)\sleq0\right\}$ and Remark \ref{rmk:minimizers}(iv) imply
\[
\cH^d(B_r(p)\cap\partial G_{\infty}\cap\partial\Omega)>0\qquad\text{for small }r\in(0,1).
\]
Therefore, there exists a constant $L>0$ such that the claim (iii) holds true, and as the above argument works for any minimizer of $\Phi_{x_0}$, we conclude that $L$ depends only on $d,\kappa,x_0$.
\end{proof}
\begin{remark}
By running the above argument with varying elliptic integrands $\Phi_k$ for $k\in\N$, it can be concluded that the constant $L$ in Lemma \ref{lem:wedging}(iii) actually depends only on $d,\kappa$, see \cite[Definition 1.1, Lemma 5.3]{DePM15}. However, having a finite value $L>0$ is enough for our purpose, and hence, we have fixed $\Phi_{x_0}(\nu):=|\nu|+\beta(x_0')\nu\cdot e_{d+1}$ in this section.
\end{remark}

We now prove Proposition \ref{prop:spherical-caps}.

\begin{proof}[Proof of Proposition \ref{prop:spherical-caps}]
Let $E_0\in BV(\Omega,\{0,1\})$ be a bounded set of volume  $m_0$, and let $\{E^h_t\}_{t\sgeq0}$ be an approximate flat flow for \eqref{eq:VPMCF-contact-angle} with initial datum $E_0$. For $t\sgeq 0$, let $r_t:=\inf\{r>0\,:\,E^h_t\subset C_r\}$ as in Proposition \ref{prop:spherical-caps}. To prove Proposition \ref{prop:spherical-caps}, we assume for the contrary that there exist $t\sgeq h$ and $x_0\in\partial E^h_t\cap\partial C\cap\partial\Omega$. By translations, we can assume $x_0=0$ without loss of generality.

\vspace{\medskipamount}

Consider the function $\beta_1\,:\,\cB_{0}(E^h_t)\to\left[-L,-\frac{1-\kappa}{\sqrt{2\kappa-\kappa^2}}\right]$ defined by
\[
\beta_1(F):=\sup_{\partial F\cap\Omega}\frac{x\cdot(-e_1)}{x\cdot e_{d+1}}\qquad\text{for }F\in\cB_0(E^h_t).
\]
Then, $\beta_1$ is lower semi-continuous in $L^1_{loc}(\R^{d+1})$. Since $\cB_0(E^h_t)$ is a compact subset of $L^1_{loc}(\R^{d+1})$, we can find $F_1\in\cB_0(E^h_t)$ that minimizes $\beta_1$ over $\cB_0(E^h_t)$. By Lemma \ref{lem:wedging}(i), we have
\begin{align}\label{eq:F1-in-H_C}
F_1\subset\left\{x\in\overline{\Omega}\,:\,x\cdot\nu_C\sleq0\right\}.
\end{align}
Choose $\alpha_1\in\left(-\frac{\pi}{2},\frac{\pi}{2}\right)$ such that $\tan(\alpha_1)=\beta_1(F_1)$. Let
\begin{equation*}
\begin{cases}
\nu_1:=\cos(\alpha_1)(-e_1)-\sin(\alpha_1)e_{d+1},\\
H_1:=\left\{x\in\Omega\,:\,x\cdot\nu_1\sleq0\right\}.
\end{cases}
\end{equation*}
From \eqref{eq:F1-in-H_C}, we also have
\begin{align}\label{eq:H1-in-H_C}
H_1\subset H_C.
\end{align}

\vspace{\medskipamount}

We similarly consider the function $\beta_2\,:\,\cB_{0}(F_1)\to\left[-L,-\frac{1-\kappa}{\sqrt{2\kappa-\kappa^2}}\right]$ defined by
\[
\beta_2(F):=\inf_{\partial F\cap\Omega}\frac{x\cdot(-e_1)}{x\cdot e_{d+1}}\qquad\text{for }F\in\cB_0(F_1).
\]
Then, $\beta_2$ is upper semi-continuous in $L^1_{loc}(\R^{d+1})$. Since $\cB_0(F_1)$ is a compact subset of $L^1_{loc}(\R^{d+1})$, we can find $F_2\in\cB_0(F_1)$ that maximizes $\beta_2$ over $\cB_0(F_1)$. Choose $\alpha_2\in\left(-\frac{\pi}{2},\frac{\pi}{2}\right)$ such that $\tan(\alpha_2)=\beta_2(F_2)$. Let
\begin{equation*}
\begin{cases}
\nu_2:=\cos(\alpha_2)(-e_1)-\sin(\alpha_2)e_{d+1},\\
H_2:=\left\{x\in\Omega\,:\,x\cdot\nu_2\sleq0\right\}.
\end{cases}
\end{equation*}
From the inclusion \eqref{eq:F1-in-H_C}, we have $F_2\subset \overline{H_1}$, and therefore, from \eqref{eq:H1-in-H_C},
\begin{align}\label{eq:H2-in-H_C}
H_2\subset H_1\subset H_C
\end{align}
Moreover, by following the proof of \cite[Lemma 5.4]{DePM15} together with \cite[Proposition 2.5]{DePM15}, we see that $H_2$ is a subminimizer of $\Phi_0$.

\vspace{\medskipamount}

We now compare the angles $\alpha_C,\,\alpha_1,\,\alpha_2\in\left(-\frac{\pi}{2},\frac{\pi}{2}\right)$. By \eqref{eq:H2-in-H_C}, we have $\alpha_2\sleq\alpha_1\sleq\alpha_C$, and therefore,
\begin{align}\label{eq:sin-comparison}
\sin(\alpha_2)\sleq\sin(\alpha_C).
\end{align}
By Proposition \ref{prop:anisotropic-young's-law} and the fact that $H_2$ is a subminimizer of $\Phi_0$, it holds $-\sin(\alpha_2)\sleq-\beta(0)$. By the construction of the spherical cap $C$, we have $-\sin(\alpha_C)=1-\kappa$. Hence, by \eqref{eq:sin-comparison},
\begin{align*}
1-\kappa=-\sin(\alpha_C)\sleq-\sin(\alpha_2)\sleq-\beta(0)\sleq1-2\kappa,
\end{align*}
which is a contradiction.
\end{proof}

\section{Proof of Theorem \ref{thm:BV-distributional-solutions}}\label{sec:BV-solutions}
Before we prove Theorem \ref{thm:BV-distributional-solutions}, we state the following error estimate that is similar to the ones in \cite{LS95,MSS16,BK18}. As the proof is obtained in the same way by following \cite[Section 4]{MSS16} and the proof of \cite[Proposition 8.11]{BK18} (together with Proposition \ref{prop:barrier-functions}), we omit the proof.

\begin{proposition}\label{prop:error-estimate}
If $1\sleq d\sleq6$, it holds that
\[
\lim_{h \to 0} \left| \int_h^{\infty} \frac{1}{h} \left[ \int_{E_t^{h}\cap\Omega} \phi \, dx - \int_{E_{t-h}^{h}\cap\Omega} \phi \, dx \right] dt - \int_h^{\infty} \int_{\partial E_t^{h}\cap\Omega} \phi v^h \, d\mathcal{H}^{d} dt \right| = 0
\]
for every $\phi\in C^1_c(\Omega\times[0,\infty))$.
\end{proposition}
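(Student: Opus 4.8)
The plan is to run the consistency argument of \cite{LS95} and \cite[Section~4]{MSS16}; a simplification particular to the present setting is that $\phi\in C^1_c(\Omega\times[0,\infty))$ is compactly supported in the \emph{open} half-space $\Omega$, hence vanishes near $\partial\Omega$, so the capillary boundary never enters and the estimate is a purely interior one. First I would reduce to a per-step sum. The two time-integrals are finite sums: for $t\in[kh,(k+1)h)$ with $k\sgeq1$ one has $E^h_t=E^h_{kh}=:E_k$ and $v^h(\cdot,t)=\tfrac1h\mathrm{sd}_{E_{k-1}}$, so that integrating over such an interval the difference-quotient term contributes $\int_{E_k}\phi\,dx-\int_{E_{k-1}}\phi\,dx$ and the flux term contributes $\int_{\partial^{\ast}E_k\cap\Omega}\phi\,\mathrm{sd}_{E_{k-1}}\,d\cH^d$. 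Since $\phi$ has compact $t$-support, only finitely many $k$ are nonzero, and it suffices to prove that $\sum_{k\sgeq1}\cE_k\to0$ as $h\to0$, where
\[
\cE_k:=\left|\int_{E_k}\phi\,dx-\int_{E_{k-1}}\phi\,dx-\int_{\partial^{\ast}E_k\cap\Omega}\phi\,\mathrm{sd}_{E_{k-1}}\,d\cH^d\right|.
\]

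The core is a per-step estimate of the shape, with $C=C(d,\kappa)>0$,
\begin{align}\label{eq:per-step-error}
\cE_k\;\sleq\;C\,\|\phi\|_{C^1}\left(\int_{E_k\Delta E_{k-1}}\mathrm{d}_{E_{k-1}}\,dx\;+\;\frac{1}{\sqrt h}\int_{\partial^{\ast}E_k\cap\Omega}\big(\mathrm{sd}_{E_{k-1}}\big)^{2}\,d\cH^d\right),
\end{align}
which is precisely what is established in \cite[Section~4]{MSS16} and \cite[Proposition~8.11]{BK18}. The route to \eqref{eq:per-step-error} is as follows. Write $d:=\mathrm{sd}_{E_{k-1}}$. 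By Proposition \ref{prop:L-infty-density} one has $E_k\Delta E_{k-1}\subset\{|d|\sleq R\sqrt h\}$, and by Corollary \ref{cor:density}, $|d|\sleq R\sqrt h$ on $\partial^{\ast}E_k\cap\Omega$; moreover, as $1\sleq d\sleq6$, Theorem \ref{thm:DeMP}(i) shows the interior singular set of $\partial E_k$ is empty, so $\partial E_k\cap\Omega=\partial^{\ast}E_k\cap\Omega$ is a $C^{2,\alpha}$ hypersurface throughout $\Omega$. Using $\chi_{E_k}-\chi_{E_{k-1}}=\mathrm{sgn}(d)$ on $E_k\Delta E_{k-1}$, the coarea formula for the $1$-Lipschitz function $d$, and the layer-cake identity $d^{\pm}=\int_0^\infty\chi_{\{\pm d>s\}}\,ds$ on $\partial^{\ast}E_k$, one rewrites both $\int_{E_k}\phi-\int_{E_{k-1}}\phi$ and $\int_{\partial^{\ast}E_k\cap\Omega}\phi\,d\,d\cH^d$ as integrals over $s\in(0,R\sqrt h)$ of slice quantities, and applies the divergence theorem to the field $\phi\,\nabla d$ on the lens-shaped regions $E_k\cap\{d>s\}$ and $\{d<-s\}\setminus E_k$ (legitimate because $\partial E_k\cap\Omega$ is smooth and $\phi\equiv0$ near $\partial\Omega$; the cut locus of $d$ within the $R\sqrt h$-layer is handled via Corollary \ref{cor:density}). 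Since $|\nabla d|=1$ and $\nabla d\perp\{d=s\}$, each slice difference collapses, upon integration in $s$, to a ``layer'' term controlled by $\int_{E_k\Delta E_{k-1}}\mathrm{d}_{E_{k-1}}\,|\nabla\phi|\,dx$ and a ``tilt'' term bounded, using $|\nabla d\cdot\nu_{E_k}-1|=\tfrac12|\nabla d-\nu_{E_k}|^2$ and $|d|\sleq R\sqrt h$ on $\partial^{\ast}E_k$, by $C\|\phi\|_{\infty}\sqrt h\int_{\partial^{\ast}E_k\cap\Omega}|\nabla\mathrm{sd}_{E_{k-1}}-\nu_{E_k}|^2\,d\cH^d$; the minimality of $E_k$ at the $k$-th step (a comparison with an appropriate competitor, again using Corollary \ref{cor:density}) then yields the excess bound $\int_{\partial^{\ast}E_k\cap\Omega}|\nabla\mathrm{sd}_{E_{k-1}}-\nu_{E_k}|^2\,d\cH^d\sleq\frac{C}{h}\int_{\partial^{\ast}E_k\cap\Omega}(\mathrm{sd}_{E_{k-1}})^2\,d\cH^d$, which combines with the above to give \eqref{eq:per-step-error}.

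Granting \eqref{eq:per-step-error}, summation over $k$ finishes the proof: Proposition \ref{prop:dissipations}(ii) gives $\sum_{k\sgeq1}\int_{E_k\Delta E_{k-1}}\mathrm{d}_{E_{k-1}}\,dx\sleq h\,P(E_0)$, while $\sum_{k\sgeq1}\int_{\partial^{\ast}E_k\cap\Omega}(\mathrm{sd}_{E_{k-1}})^{2}\,d\cH^d=h\int_0^\infty\!\!\int_{\partial E^h_t\cap\Omega}|v^h|^2\,d\cH^d\,dt\sleq h\,A(d,\kappa)\,P(E_0)$ by Proposition \ref{prop:L2-discrete-velocities}, so that $\sum_{k\sgeq1}\cE_k\sleq C\,\|\phi\|_{C^1}\big(h+\sqrt h\,A(d,\kappa)\big)P(E_0)\to0$. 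The genuinely hard step is \eqref{eq:per-step-error}, and within it the per-step \emph{tilt estimate} — that on the current step's smooth interface $\partial^{\ast}E_k$ the unit normal $\nu_{E_k}$ is, in an averaged $L^2$ sense, close to the direction $\nabla\mathrm{sd}_{E_{k-1}}$ pointing away from the previous set — which rests on minimality at the $k$-th step; this is also the point where the exclusion of interior singularities (hence the restriction $1\sleq d\sleq6$) is essential, ensuring that the distance-function and tubular-neighbourhood computations take place on a genuinely smooth hypersurface. All of this is carried out in full in \cite[Section~4]{MSS16} and \cite[Proposition~8.11]{BK18}, which is why it is omitted here.
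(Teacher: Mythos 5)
Your overall strategy (per-step reduction, interior smoothness for $d\sleq 6$, slicing along $\mathrm{sd}_{E_{k-1}}$, a tilt-excess estimate from minimality, and summation via Propositions \ref{prop:dissipations} and \ref{prop:L2-discrete-velocities}) is the same Luckhaus--Sturzenhecker-type route the paper has in mind, and you correctly note that compact support of $\phi$ in $\Omega$ makes the capillary boundary invisible, so the BK18 half-space version reduces to the classical interior argument. The arithmetic in the final summation is also checked out.

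The gap is that you never account for the volume-penalization term $\frac{1}{\sqrt h}\,\bigl||E|-m_0\bigr|$ in the functional $\cF^h$, which is exactly the new feature relative to LS95 and BK18 and the reason the paper explicitly invokes Proposition \ref{prop:barrier-functions} alongside the MSS16/BK18 citations. When you compare $E_k$ with a lens-type competitor $F$ to produce the tilt-excess bound, the minimality inequality $\cF^h(E_k,E_{k-1})\sleq\cF^h(F,E_{k-1})$ produces an extra term of size $\frac{1}{\sqrt h}\bigl||E_k|-|F|\bigr|$ (equivalently, a contribution from $\lambda^h$ through the Euler--Lagrange identity $H_{E_k}+v^h=\lambda^h$). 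Your clean per-step bound \eqref{eq:per-step-error} is what one gets in the unconstrained setting; in the volume-constrained setting there is an additional $\lambda^h$-dependent contribution, and summing it requires precisely the uniform $L^2(0,T)$-control on $\lambda^h$ furnished by Proposition \ref{prop:barrier-functions}(ii) (whose proof in turn uses the equi-boundedness in Proposition \ref{prop:barrier-functions}(i)). Without this step the argument is incomplete: you should either construct a volume-preserving competitor so the penalization cancels exactly, or retain the $\lambda^h$ term in \eqref{eq:per-step-error} and sum it using $\int_0^T|\lambda^h|^2\,dt\sleq R_T$. As written, your sketch silently drops the one ingredient the paper singles out as essential on top of the cited references.
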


We prove Theorem \ref{thm:BV-distributional-solutions}.

\begin{proof}[Proof of Theorem \ref{thm:BV-distributional-solutions}]
(i) We first obtain from the general convergence theorem \cite[Theorem 4.4.2]{H86} (together with \eqref{eq:energy-assumption}, Proposition \ref{prop:L2-discrete-velocities}, Proposition \ref{prop:barrier-functions}, and Corollary \ref{cor:L2-mean-curvature}) that there exist functions $v:\Omega\times(0,\infty)\to\R$, $\lambda:(0,\infty)\to\R$, $H_E:\Omega\times(0,\infty)\to\R$ and constants $C(d,\kappa),\ R_T=R_T(d,\kappa,T)>0$ (for a given $T>0$) such that \eqref{eq:L2-target-functions} holds with convergence properties
\begin{equation}\label{eq:convergence-functions}
\left\{
\begin{aligned}
\int_0^\infty \int_{\partial^{\ast}E^h_t\cap\Omega} \phi v^h \, d\mathcal{H}^d \, dt\quad&\longrightarrow\quad\int_0^\infty \int_{\partial^{\ast}E_t\cap\Omega} \phi v \, d\mathcal{H}^d \, dt,\\
\int_0^\infty \int_{\partial^{\ast}E^h_t\cap\Omega} \phi \lambda^h \, d\mathcal{H}^d \, dt\quad&\longrightarrow\quad\int_0^\infty \int_{\partial^{\ast}E_t\cap\Omega} \phi \lambda \, d\mathcal{H}^d \, dt\\
\int_0^\infty \int_{\partial^{\ast}E^h_t\cap\Omega} \phi H_{E^h_t} \, d\mathcal{H}^d \, dt\quad&\longrightarrow\quad\int_0^\infty \int_{\partial^{\ast}E_t\cap\Omega} \phi  H_{E}(\cdot,t) \, d\mathcal{H}^d \, dt
\end{aligned}
\right.
\end{equation}
as $h\to0$ for $\phi\in C_c(\Omega\times[0,\infty))$, and
\begin{equation}
\left\{
\begin{aligned}\label{eq:convergence-vector-fields}
\int_0^\infty \int_{\partial^{\ast}E^h_t\cap\Omega} v^h\nu_{E^h_t}\cdot\Psi \, d\mathcal{H}^d \, dt\quad&\longrightarrow\quad\int_0^\infty \int_{\partial^{\ast}E_t\cap\Omega} v\nu_{E_t}\cdot\Psi \, d\mathcal{H}^d \, dt,\\
\int_0^\infty \int_{\partial^{\ast}E^h_t\cap\Omega} \lambda^h\nu_{E^h_t}\cdot\Psi \, d\mathcal{H}^d \, dt\quad&\longrightarrow\quad\int_0^\infty \int_{\partial^{\ast}E_t\cap\Omega} \lambda\nu_{E_t}\cdot\Psi \, d\mathcal{H}^d \, dt,\\
\int_0^\infty \int_{\partial^{\ast}E^h_t\cap\Omega} H_{E^h_t}\nu_{E^h_t}\cdot\Psi \, d\mathcal{H}^d \, dt\quad&\longrightarrow\quad\int_0^\infty \int_{\partial^{\ast}E_t\cap\Omega} H_{E}(\cdot,t)\nu_{E_t}\cdot\Psi \, d\mathcal{H}^d \, dt
\end{aligned}
\right.
\end{equation}
\noindent as $h\to0$ for admissible $\Psi\in C^1_c\left(\overline{\Omega}\times[0,\infty),\R^{d+1}\right)$. Moreover, for a.e. $t\sgeq0$, $H_E(\cdot,t)$ is a generalized mean curvature of $\partial^{\ast}E_t\cap\Omega$. Indeed, fix a $\eta\in C_c([0,\infty))$ and an admissible $\Psi\in C^1_c(\overline{\Omega},\R^{d+1})$, and let $F\in C_c(\R^{d+1}\times\R^{d+1})$ such that $F(x,\nu):=(\mathrm{id}-\nu \otimes \nu) : \nabla\Psi(x)$ on $\overline{\Omega}\times\left\{\nu\in\R^{d+1}\,:\,|\nu|\sleq2\right\}$. We then have, by \eqref{eq:energy-assumption} (see \cite[(4.2)]{MSS16}) and \eqref{eq:convergence-vector-fields}, that
\begin{align*}
\int_0^\infty \eta(t) \int_{\partial^{\ast}E_t\cap\Omega} (\mathrm{id}-\nu_{E_{t}} \otimes \nu_{E_{t}}) : \nabla \Psi \, d\mathcal{H}^{d} dt &= \int_0^\infty  \int_{\partial^{\ast}E_t\cap\Omega} \eta(t)F(x,\nu_{E_t}(x)) \, d\mathcal{H}^{d}(x) dt \\
&= \lim_{h \to 0} \int_0^\infty  \int_{\partial^{\ast}E^h_t\cap\Omega} \eta(t)F(x,\nu_{E^h_t}(x)) \, d\mathcal{H}^{d}(x) dt \\
&= \lim_{h \to 0} \int_0^\infty \int_{\partial^{\ast}E^h_t\cap\Omega} \eta(t) (\mathrm{id}-\nu_{E^h_{t}} \otimes \nu_{E^h_{t}}) : \nabla \Psi \, d\mathcal{H}^{d} dt \\
&= \lim_{h \to 0} \int_0^\infty \int_{\partial^{\ast}E^h_t\cap\Omega} \eta(t) H_{E^h_t} \nu_{E_t^h} \cdot \Psi \, d\mathcal{H}^{d} dt \\
&= \int_0^\infty \int_{\partial^{\ast}E_t\cap\Omega} \eta(t) H_{E}(\cdot,t) \nu_{E_t} \cdot \Psi \, d\mathcal{H}^{d} dt.
\end{align*}
As $\eta\in C_c([0,\infty))$ and $\Psi\in C^1_c(\overline{\Omega},\R^{d+1})$ were arbitrary, the set $\partial^{\ast}E_t\cap\Omega$ has a generalized mean curvature $H_E(\cdot,t)$ for a.e. $t\sgeq0$.

\vspace{\medskipamount}

(ii) By the divergence theorem, we have that for $\varphi\in C^1_c(\R^{d+1},\R^{d+1})$,
\[
\int_{E^h_t} \mathrm{div}(\varphi) \, dx - \int_{\partial^{\ast} E^h_t\cap\Omega} \varphi \cdot \nu_{E^h_t} \ d\cH^d = -\int_{\mathrm{Tr}(E^h_t)} \varphi \cdot e_{d+1} \ d\cH^{d}.
\]
Letting $h\to0$, and using \eqref{eq:energy-assumption} (in the form of \cite[(4.2)]{MSS16} with a function $F\in C_c(\R^{d+1}\times\R^{d+1})$ such that $F(x,\nu)=\varphi(x)\cdot\nu$ on a compact set containing $B_{R_T}$ from Proposition \ref{prop:barrier-functions}), we have
\[
\int_{\partial^{\ast} E^h_t\cap\Omega} \varphi \cdot \nu_{E^h_t} \ d\cH^d\quad\longrightarrow\quad\int_{\partial^{\ast} E_t\cap\Omega} \varphi \cdot \nu_{E_t} \ d\cH^d\qquad\text{as }h\to0.
\]
As we also have $E^h_t\to E_t$ in $L^1(\Omega)$ as $h\to0$ from Theorem \ref{thm:flat-flows}, we obtain
\[
\int_{E_t} \mathrm{div}(\varphi) \, dx - \int_{\partial^{\ast} E_t\cap\Omega} \varphi \cdot \nu_{E^h_t} \ d\cH^d = -\lim_{h\to0}\int_{\mathrm{Tr}(E^h_t)} \varphi \cdot e_{d+1} \ d\cH^{d}.
\]
By the divergence theorem now applied to $E_t$, we have
\begin{align}\label{eq:trace-convergence}
\int_{\mathrm{Tr}(E^h_t)} \phi  \ d\cH^{d}\quad\longrightarrow\quad\int_{\mathrm{Tr}(E_t)} \phi  \ d\cH^{d}\qquad\text{as }h\to0.
\end{align}
Here, we altered $\varphi\cdot e_{d+1}$ for an arbitrary vector field $\varphi\in C^1_c(\R^{d+1},\R^{d+1})$ to an arbitrary function $\phi\in C^1_c(\R^d)$.

By a standard approximation argument together with Proposition \ref{prop:dissipations} and Proposition \ref{prop:barrier-functions}, we see that \eqref{eq:trace-convergence} extends to $\phi\in L^{\infty}(\partial\Omega)$. Indeed, for a given $T>0$ and $t\in[0,T]$, we choose $R_T>0$ as in Proposition \ref{prop:barrier-functions}. Let $\phi\in L^{\infty}(\partial\Omega)$ and let $\{\phi_k\}_k\in\N$ be a sequence of $C^{\infty}(\partial\Omega)$-functions such that $\|\phi_k-\phi\|_{L^{\infty}(\partial\Omega)}\sleq\frac
1k$. In the next argument, we can assume that $\phi,\,\phi_k$ are supported in $B_{R_T}$ in the following without loss of generality. Write
\begin{align*}
\left|\int_{\mathrm{Tr}(E^h_t)}\phi\,d\cH^ddt-\int_{\mathrm{Tr}(E_t)}\phi\,d\cH^ddt\right|&\sleq\left|\int_{\mathrm{Tr}(E^h_t)}\phi\,d\cH^ddt-\int_{\mathrm{Tr}(E^h_t)}\phi_k\,d\cH^ddt\right|\\
&\qquad\qquad+\left|\int_{\mathrm{Tr}(E^h_t)}\phi_k\,d\cH^ddt-\int_{\mathrm{Tr}(E_t)}\phi_k\,d\cH^ddt\right|\\
&\qquad\qquad\qquad\qquad+\left|\int_{\mathrm{Tr}(E_t)}\phi_k\,d\cH^ddt-\int_{\mathrm{Tr}(E_t)}\phi\,d\cH^ddt\right|\\
&\sleq\frac{|\mathrm{Tr}(E^h_t)|+|\mathrm{Tr}(E_t)|}{k}+\left|\int_{\mathrm{Tr}(E^h_t)}\phi_k\,d\cH^ddt-\int_{\mathrm{Tr}(E_t)}\phi_k\,d\cH^ddt\right|\\
&\sleq\frac{2\cH^d(B_{R_T}\cap\partial\Omega)}{k}+\left|\int_{\mathrm{Tr}(E^h_t)}\phi_k\,d\cH^ddt-\int_{\mathrm{Tr}(E_t)}\phi_k\,d\cH^ddt\right|.
\end{align*}
Taking the limit-supremum as $h\to0$ and letting $k\to\infty$ in order imply \eqref{eq:trace-convergence} for arbitrary $\phi\in L^{\infty}(\partial\Omega)$.

\vspace{\medskipamount}

Let $\eta\in C^1_c([0,\infty))$ and admissible $\Psi\in C^1_c(\overline{\Omega},\R^{d+1})$. Write, from \eqref{eq:discrete-EL}, that
\begin{align*}
\int_0^{\infty}\eta(t)&\int_{\partial^{\ast}E^h_t\cap\Omega} \mathrm{div}_{\partial E^h_t}\Psi\,d\cH^ddt + \int_0^{\infty}\eta(t)\int_{\partial^{\ast}E^h_t\cap\Omega} v^h\Psi\cdot\nu_{E^h_t}\,d\cH^ddt\notag\\
&+\int_0^{\infty}\eta(t)\int_{\mathrm{Tr}(E^h_t)}\mathrm{div'}(\beta\Psi')\,d\cH^{d}dt = \int_0^{\infty}\eta(t)\lambda^h(t) \int_{\partial^{\ast}E^h_t\cap\Omega}  \Psi\cdot\nu_{E^h_t}\,d\cH^ddt.
\end{align*}
We apply \eqref{eq:convergence-functions}, \eqref{eq:convergence-vector-fields} (by viewing $\eta(t)\Psi(x)$ as a admissible test vector field in $C_c^1(\overline{\Omega}\times[0,\infty))$), \eqref{eq:trace-convergence} with the dominated convergence theorem, we have
\begin{align*}
\int_0^{\infty}\eta(t)&\int_{\partial^{\ast}E_t\cap\Omega} \mathrm{div}_{\partial E_t}\Psi\,d\cH^ddt + \int_0^{\infty}\eta(t)\int_{\partial^{\ast}E_t\cap\Omega} v\Psi\cdot\nu_{E_t}\,d\cH^ddt\notag\\
&+\int_0^{\infty}\eta(t)\int_{\mathrm{Tr}(E_t)}\mathrm{div'}(\beta\Psi')\,d\cH^{d}dt = \int_0^{\infty}\eta(t)\lambda(t) \int_{\partial^{\ast}E_t\cap\Omega}  \Psi\cdot\nu_{E_t}\,d\cH^ddt.
\end{align*}
Since $\eta\in C^1_c([0,\infty))$ was arbitrary, we obtain \eqref{eq:without-perimeter-assumption} for a.e. $t\sgeq0$.

\vspace{\medskipamount}

(iii) Let $\phi\in C^1_c(\Omega\times[0,\infty))$. We write, by a change of variables and using the initial datum $E_0$,
\[
\int_h^\infty \left[ \int_{E_t^{h}} \phi \,dx - \int_{E_{t-h}^{h}} \phi \,dx \right] dt = \int_h^\infty \int_{E_t^{h}} (\phi(x,t) - \phi(x,t+h)) \,dxdt - h \int_{E_0} \phi \,dx,
\]
and therefore,
\[
\lim_{h \to 0} \int_h^\infty \frac{1}{h} \left[ \int_{E_t^{h}} \phi \,dx - \int_{E_{t-h}^{h}} \phi \,dx \right] dt = - \int_0^\infty \int_{E_t} \phi_t(x,t) \,dx\,dt - \int_{E_0} \phi \,dx.
\]
By the above and Proposition \ref{prop:error-estimate}, we obtain \eqref{eq:motion-law-v}. Also, \eqref{eq:motion-law-lambda} follows from \eqref{eq:pointwise} and \eqref{eq:convergence-functions}. The identity \eqref{eq:lambda} is seen by taking a cylindrical test function $\phi(x,t)=\eta(t)\zeta^r(x)$ where $\eta\in C_c^1([0,\infty))$, and $\zeta^r(x)\in C^1_c(\Omega)$ is $1$ on $[-r^{-1},r^{-1}]\times\cdots\times[-r^{-1},r^{-1}]\times[r,r^{-1}]$ for $r\in(0,1).$ Then, \eqref{eq:motion-law-v} gives
\[
\int_0^{\infty}\int_{E_t}\eta_t(t)\zeta^r(x)\,dxdt+\int_{E_0}\eta(0)\zeta^r(x)\,dx=-\int_0^{\infty}\int_{\partial^{\ast}E_t\cap\Omega}\eta(t)\zeta^r(x)\,dxdt.
\]
Letting $r\to0$, we obtain
\[
\int_0^{\infty}\eta_t(t)|E_t|\,dt+\eta(0)|E_0|=-\int_0^{\infty}\eta(t)\int_{\partial^{\ast}E_t\cap\Omega}v\,d\cH^ddt.
\]
Using the fact that $|E_t|=0$ for all $t\sgeq0$ and that $\eta\in C_c^1([0,\infty))$ was arbitrary, we see that the left-hand side is zero, and thus,
\[
\int_{\partial^{\ast}E_t\cap\Omega}v\,d\cH^d=0\qquad\text{for a.e. }t\sgeq0.
\]
Repeating the above argument with \eqref{eq:motion-law-lambda}, we get \eqref{eq:lambda} for a.e. $t\sgeq0$.
\end{proof}





\bibliographystyle{plain}
\bibliography{Preprint}

\end{document}